\date{Wroc{\l}aw, \today}
\theoremstyle{plain}
\newtheorem{thm}{Theorem}[section]
\newtheorem*{thm*}{Theorem}
\newtheorem{lem}[thm]{Lemma}
\newtheorem{prop}[thm]{Proposition}
\newtheorem{clm}[thm]{Claim}
\newtheorem{cor}[thm]{Corollary}
\newtheorem{conj}[thm]{Conjecture}
\theoremstyle{definition}
\newtheorem{df}[thm]{Definition}
\newtheorem{denot1}[thm]{Notation}
\newtheorem{denot}[thm]{Notations}
\newtheorem{hypot}[thm]{Hypothesis}
\newtheorem{case}{Case}
\theoremstyle{remark}
\newtheorem{rem}[thm]{Remark}
\newtheorem{exmp}[thm]{Example}
\numberwithin{equation}{section}
\renewcommand{\Pr}{\mathrm {P}}
\newcommand{\ind}{{\mathbf {1}}}
\newcommand{\im}{{\mathrm{im}}}
\renewcommand{\SS}{{\mathbb{S}^2}}
\newcommand{\HH}{{\mathbb{H}^2}}
\newcommand{\HHH}{{\mathbb{H}^3}}
\newcommand{\Hn}{{\mathbb{H}^n}}
\newcommand{\op}{\omega^{(p)}}
\newcommand{\g}{\gamma}
\newcommand{\s}{\sigma}
\newcommand{\B}{\mathfrak{B}}
\newcommand{\ot}{{\tilde{o}}}
\newcommand{\rt}{{\tilde\varrho}}
\newcommand{\gs}{\gamma^*}
\newcommand{\gr}{\mathrm{gr}}
\newcommand{\F}{\mathrm{F}}
\newcommand{\oE}{{\vec{E}}}
\newcommand{\CG}{\Gamma}
\renewcommand{\d}{{^\dag}}
\newcommand{\rb}{{^\mathrm{rb}}}
\renewcommand{\r}{{\mathrm{r}}}
\newcommand{\R}{{\mathrm{R}}}
\newcommand{\fL}[1]{{f_{#1}^L}}
\newcommand{\ft}[1]{{f_{#1}^\triangle}}
\newcommand{\ud}{\mathrm{d}}
\newcommand{\dk}{\frac{\ud}{\ud k}}
\newcommand{\bd}{\partial\,}
\newcommand{\bdi}{\partial_\infty\,}
\newcommand{\cH}{{\widehat{\mathbb{H}}^3}}
\newcommand{\clcH}[1]{{\overline{#1}^\cH}}
\newcommand{\sm}{\setminus}
\newcommand{\sint}{\mathrm{int}\,}
\newcommand{\Isom}{\mathrm {Isom}}
\newcommand{\pc}{{p_\mathrm c}}
\newcommand{\pu}{{p_\mathrm u}}
\newcommand{\emd}[1]{\emph{#1}}
\newcommand{\work}{paper}%paper/dissertation(?)/...
\newcommand{\partw}{paper}%paper/part of the dissertation/...
\newcommand{\comment}[1]{}
\renewcommand{\{}{\left\lbrace}
\renewcommand{\}}{\right\rbrace}
\newlength{\cupdotwidth}
\begin{document}

\title{Non-uniqueness phase of Bernoulli percolation on reflection groups for some polyhedra in $\HHH$}

%\abbrevauthors{J.~Czajkowski} 
%\abbrevtitle{}

\author{Jan Czajkowski\\
\small Mathematical Institute, University of Wroc{\l}aw\\
\small pl. Grunwaldzki 2/4, 50-384 Wroc{\l}aw, Poland\\
\small E-mail: {\tt czajkow@math.uni.wroc.pl}}

\maketitle
\begin{abstract}
In the present paper I consider Cayley graphs of reflection groups of finite-sided Coxeter polyhedra in $3$-dimensional hyperbolic space $\HHH$, with standard sets of generators. As the main result, I prove the existence of non-trivial non-uniqueness phase of bond and site Bernoulli percolation on such graphs, i.e.~that $\pc<\pu$, for two classes of such polyhedra:
\begin{itemize}
\item for any $k$-hedra as above with $k\ge 13$;
\item for any compact right-angled polyhedra as above.
\end{itemize}
I also establish a natural lower bound for the growth rate of such Cayley graphs (when the number of faces of the polyhedron is $\ge 6$; see thm.~\ref{gr>=}) and an upper bound for the growth rate of the sequence $(\#\{\textrm{simple cycles of length $n$ through $o$}\})_n$ for a regular graph of degree $\ge 2$ with a distinguished vertex $o$, depending on its spectral radius (see thm.~\ref{thmgs} and rem.~\ref{g<=gs<=rt}), both used to prove the main result.
\end{abstract}
\section{Introduction}
Consider a connected, locally finite\footnote{That means: with finite degree of each vertex.} graph $\CG$. Fix $p\in[0;1]$ and for each edge, delete it from the graph with probability $1-p$ and leave it with probability $p$; do it independently for all edges of $\CG$. So we obtain a random subset of the set of edges of $\CG$, which forms, together with all the vertices of $\CG$, a random subgraph $\op$ of $\CG$. It is a model of percolation, which we call \emd{bond Bernoulli percolation} on $\CG$ \emd{with parameter} $p$. Now, consider the number of infinite components (called \emd{clusters}) of $\op$.% (it is a random variable in our model by an easy exercise).
\begin{df}
We define the \emd{critical probability} for the above percolation model on $\CG$ as
$$\pc(\CG):=\inf\{p\in[0;1]:\Pr(\op\textrm{ has some infinite cluster})>0\}.$$
\end{df}
It turn out that, due to Kolmogorov 0-1 law, always $\Pr(\op\textrm{ has some infinite cluster})\in\{0;1\}$, so it is $0$ for $p<\pc(\CG)$ and $1$ for $p>\pc(\CG)$ (because the event in question is an \emd{increasing} event---see e.g.~sections 1.4 and 2.1 of \cite{Grim}).
Next, we define another threshold:
\begin{df}
\emd{Unification probability} for the above percolation model is
$$\pu(\CG):=\inf\{p\in[0;1]:\textrm{a.s.~there is a unique infinite cluster in $\op$}\}.$$
\end{df}
Obviously, $\pc(\CG)\le\pu(\CG)$.
\begin{df}
I call a graph $\CG$ \emd{vertex-transitive}, or \emd{transitive} for short, if its automorphism group acts transitively on the set of vertices of $\CG$. If there are just finitely many orbits of vertices of $\CG$ under its automorphism group, then I call $\CG$ \emd{quasi-transitive}.
\end{df}
If $\CG$ is connected, locally finite and transitive, then it is known by \cite[thm.~1]{NewmSchul} (see also \cite[thm.~7.5]{LP}) that the number of infinite clusters in $\op$ is a.s.~constant and equal to $0$, $1$ or $\infty$. Hence, for $p\in(\pc(\CG),\pu(\CG))$, a.s.~$\op$ has infinitely many components. Mathematicians are interested when this interval of $p$ is non-degenerate, i.e.~when $\pc(\CG)<\pu(\CG)$. Amenability plays an important role in these investigations:
\begin{df}
We call a locally finite graph $\CG$ \emd{non-amenable}, if
$$\inf \frac{|\bd K|}{|K|}>0$$
where the infimum is taken over all non-empty finite sets of vertices of $\CG$ and where $\bd K$ is the set of edges of $\CG$ with exactly one vertex in $K$. We call $\CG$ \emd{amenable}, if the infimum equals $0$.
\end{df}
A sufficient condition for $\pc(\CG)<\pu(\CG)$ has been conjectured by Benjamini and Schramm:
\begin{conj}[\cite{BS96}]\label{conjBS96}
If $\CG$ is a non-amenable, quasi-transitive\footnote{This notion is referred to as ``almost transitive'' in \cite{BS96}.} graph, then
$$\pc(\CG)<\pu(\CG).$$
\end{conj}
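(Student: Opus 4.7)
Since this statement is the Benjamini--Schramm conjecture, which has remained open in full generality since 1996, I can only propose a strategic framework; the paper itself, judging from the abstract, does not attack the full generality but rather two concrete subfamilies of hyperbolic-reflection-group Cayley graphs. The guiding principle is Kesten's criterion: non-amenability of $\CG$ is equivalent (in the quasi-transitive setting) to $\rho(\CG)<1$, where $\rho(\CG)$ is the spectral radius of the simple random walk on $\CG$. The plan is to convert this spectral gap into a quantitative separation of $\pc(\CG)$ and $\pu(\CG)$ via two auxiliary quantities: the exponential growth rate $\gr(\CG)$ of $\CG$ and the exponential growth rate $\gs(\CG)$ of simple cycles through a fixed vertex $o$.

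Concretely, the plan proceeds in three stages. First, using quasi-transitivity, fix a basepoint $o\in\CG$ and reduce connection probabilities to finitely many orbit types. Second, bound $\pc(\CG)$ from above via an exploration/branching comparison whose branching rate is controlled by $\gr(\CG)$; the announced lower bound on $\gr(\CG)$ (thm.~\ref{gr>=}) then yields an explicit numerical upper bound on $\pc(\CG)$. Third, bound $\pu(\CG)$ from below via a second-moment / BK-type argument on long open simple cycles through $o$: a.s.\ uniqueness of the infinite cluster at $p$ forces the expected number of such cycles of length $n$ to be of order $(p\cdot\gs(\CG))^n$ up to polynomial corrections, and since the announced upper bound on $\gs(\CG)$ in terms of $\rho(\CG)$ (thm.~\ref{thmgs}) keeps $\gs(\CG)$ small, this implies $\pu(\CG)\ge 1/\gs(\CG)$. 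The strict inequality $\gr(\CG)>\gs(\CG)$ produced by combining the two announced estimates with $\rho(\CG)<1$ then separates $\pc$ from $\pu$.

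The main obstacle will be the lower bound on $\pu$. Unlike $\pc$, the threshold $\pu$ has no known variational characterization, and separating it from $\pc$ on a general non-amenable quasi-transitive graph seems to demand extra geometric input beyond the spectral gap---a rich system of ends, a nontrivial $\ell^2$-cohomology class, or a self-similar decomposition along which independent copies of the percolation can be coupled. Presumably this is why the paper specializes to Cayley graphs of reflection groups of hyperbolic Coxeter polyhedra: the boundary at infinity of $\HHH$ and the combinatorics of the polyhedral fundamental domain provide such structure in explicit form, and the two cases actually resolved (a $k$-hedron with $k\ge 13$, or any compact right-angled polyhedron) are presumably exactly those in which the quantitative growth-rate and spectral-radius estimates line up with \emph{strict} inequality. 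The residual analytic work is then to verify, case by case, that the numerical constants delivered by thm.~\ref{gr>=} and thm.~\ref{thmgs} satisfy the requisite gap.
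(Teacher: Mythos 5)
You have correctly recognized that this statement is the open Benjamini--Schramm conjecture: the paper states it only as a \emph{conjecture}, offers no proof of it, and proves instead the special cases in theorems \ref{3phgen} and \ref{3phRACpt}. Your strategic sketch does match the paper's actual route for those cases: $\pc(\CG)\le 1/\gr(\CG)$ (theorem \ref{bdpc}), $\pu(\CG)\ge 1/\g(\CG)$ with $\g\le\gs\le\rt$ (theorem \ref{bdpu} and remark \ref{g<=gs<=rt}), and then a numerical verification that $\gs(\CG)<\gr(\CG)$.

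Two small corrections to how you describe the mechanism. First, the lower bound on $\pu$ is not obtained by a second-moment/BK argument carried out in the paper; it is imported wholesale as Schramm's theorem ($\pu\ge 1/\g$ for transitive graphs), and the paper's own work goes into bounding $\gs$ via the universal cover $T_k$ and the Green function, and bounding $\rt$ via Gabber's lemma with a hand-tuned edge function depending on $r(v)$ and $q_i(v)$. Second, the separation does not follow from the spectral gap $\varrho(\CG)<1$ alone, contrary to what your third stage suggests: non-amenability gives only $\rt(\CG)<k$, which is far too weak, and the paper explicitly notes that its method fails for some tilings of $\HH$ (e.g.\ right-angled regular pentagons) even though those graphs are non-amenable and the inequality $\pc<\pu$ is known there by other means. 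What actually closes the gap is the polyhedron-specific combinatorics (propositions \ref{ePi}--\ref{r<=3} bounding $r(v)\le 3$ and the $q_3$ counts) feeding into Gabber's lemma on one side, and the Steinberg formula comparison with the right-angled ``model'' series $W\rb$ on the other. So your framework is the right one, but the decisive input is quantitative and geometric, not the abstract spectral gap.
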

On the other hand, in case of a connected transitive graph, non-amenability is a necessary condition for that inequality---see \cite[thm.~7.6]{LP} and also an original paper \cite{BK89}.
\par There are several classes of graphs, for which the above conjecture has been established. First such result is due to \cite{GrimNewm} for the product of $\mathbb{Z}^d$ and an infinite regular tree of sufficiently high degree; then, among others, there is paper \cite{Lal} establishing that for Cayley graphs of a wide class of Fuchsian groups and paper \cite{BS}---for transitive, non-amenable, planar graphs with one end\footnote{A transitive graph has one end, if after throwing out any finite set of vertices it has still exactly one infinite component.}. There is also quite general result in \cite{PSN}: any finitely-generated non-amenable group has a Cayley graph $\CG$ with $\pc(\CG)<\pu(\CG)$, and an analogous result for a continuous model of percolation in \cite{Tyk}.
\par Many of those results are obtained as well for site Bernoulli percolation, which is defined in analogous way to the bond one, but we remove vertices at random (independently, with some fixed probability $p$) from the graph $\CG$ instead of edges; then, $\op$ is defined as the random subgraph of $\CG$ induced by the remaining vertices.\footnote{Actually, no qualitative difference between bond and site Bernoulli percolation seems to be there in proofs of such theorems as above.}
\par In the context of those results above for graphs arising naturally from the hyperbolic plane $\HH$, there is interest in seeking for concrete examples of graphs of natural tilings of $n$-dimensional hyperbolic space $\Hn$ for $n\ge 3$, for which we have $\pc<\pu$ for bond or site Bernoulli percolation.\footnote{It is known that such examples exist, when, roughly speaking, the degree of the graph is sufficiently high.} In this \work{} I find such examples in dimension $3$, arising from reflection groups in $\HHH$. Namely, the two main theorems are:
\begin{thm}\label{3phgen}
If $k\ge13$, then for $\CG$ the Cayley graph of a reflection group of a Coxeter $k$-hedron in $\HHH$ with the standard generating set,\footnote{Cf.~section \ref{secprelim} and hypothesis \ref{PiG}.} we have
$$\pc(\CG)<\pu(\CG)$$
for bond and site Bernoulli percolation on $\CG$.
\end{thm}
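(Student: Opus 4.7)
My plan is to prove the strict inequality $\pc(\CG)<\pu(\CG)$ by bounding $\pc(\CG)$ from above and $\pu(\CG)$ from below, using the two auxiliary ingredients announced in the abstract: the growth rate lower bound of thm.~\ref{gr>=} and the simple-cycle upper bound of thm.~\ref{thmgs}.

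For the upper bound on $\pc(\CG)$, first observe that $\CG$ is $k$-regular, since the standard generating set consists of the $k$ reflections in the faces of the polyhedron, each of order $2$. By thm.~\ref{gr>=}, $\gr(\CG)$ is at least some explicit quantity $\gamma_0(k)$. Extracting a suitable spanning subtree of $\CG$ whose branching number reflects this growth---or, alternatively, invoking classical upper bounds on $\pc$ in terms of the branching number of the ambient graph---translates this into an estimate of the form $\pc(\CG)\le 1/\gamma_0(k)$. The hyperbolic, ``tree-like at large scale'' nature of reflection groups in $\HHH$ is what makes this comparison tight.

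For the lower bound on $\pu(\CG)$, the key input is thm.~\ref{thmgs}, which bounds $\gs(\CG)$, the exponential growth rate of the number of simple cycles of length $n$ through a fixed vertex $o$, in terms of the spectral radius $\rho$ of the simple random walk on $\CG$. I would first estimate $\rho(\CG)$ using the coarse geometry of Coxeter reflection groups, for example by comparison with the $k$-regular tree where $\rho = 2\sqrt{k-1}/k$, or via known estimates for random walks on groups of Coxeter type. Substituting into thm.~\ref{thmgs} yields an upper bound $\gs(\CG)\le\sigma(k)$. A cycle-counting argument in the spirit of second moment or Benjamini--Lyons--Peres--Schramm type bounds then converts this into the required lower bound $\pu(\CG)\ge 1/\sigma(k)$: below that threshold, the expected number of open simple cycles through $o$ remains bounded, which rules out the coalescence of distinct infinite clusters in a non-amenable setting.

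The main obstacle is the concrete numerical verification that $1/\gamma_0(k) < 1/\sigma(k)$ begins to hold precisely at $k=13$. Both estimates must be sharp enough to close the gap at this threshold, so careful tracking of the constants produced by thm.~\ref{gr>=} and thm.~\ref{thmgs}, as well as of the spectral-radius estimate, will be essential; the integer $13$ presumably emerges from the resulting numerical inequality. The site percolation case should follow by the same scheme, either by rerunning the arguments with vertex clusters and vertex-cycle counts, or via standard comparison inequalities between bond and site thresholds on $k$-regular graphs.
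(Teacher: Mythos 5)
Your overall architecture matches the paper's: the proof does reduce to showing $\gs(\CG)<\gr(\CG)$, with $\pc(\CG)\le 1/\gr(\CG)$ coming from Lyons (thm.~\ref{bdpc}), $\pu(\CG)\ge 1/\g(\CG)\ge 1/\gs(\CG)$ from Schramm (thm.~\ref{bdpu}), the lower bound on $\gr$ from thm.~\ref{gr>=}, and the conversion of a spectral-radius-type bound into a bound on $\gs$ via thm.~\ref{thmgs}. You do not need to re-derive the first two via spanning subtrees or second-moment cycle counts; they are quoted results.

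However, there is a genuine gap at the single most important technical step: you have no mechanism for producing an \emph{upper} bound on $\rt(\CG)$ (equivalently, on the spectral radius $\varrho(\CG)=\rt(\CG)/k$). Comparison with the $k$-regular tree runs in the wrong direction --- since $\CG$ covers onto nothing smaller and contains $T_k$-like behaviour plus extra cycles, one only gets $\varrho(\CG)\ge\varrho(T_k)=2\sqrt{k-1}/k$, which is useless for bounding $\gs$ from above; and ``known estimates for random walks on groups of Coxeter type'' is not a proof. The paper's argument here is Lemma \ref{rhogen}, $\rt(\CG)\le\frac{k+17}{3}$, proved via Gabber's lemma (cor.~\ref{Gabcor}) applied to a flow $F$ built from the word-length orientation $O(\CG)$: one shows using the hyperbolic geometry of the Coxeter polyhedron (Andreev's theorem, propositions \ref{ePi}--\ref{r<=3}) that every vertex has at most $3$ inward edges, refines this with the count of lemma \ref{q_3gen} bounding how many outward neighbours have $r(\cdot)=3$, and then optimises the parameters $c_1,c_2,c_3$. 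Without this (or an equivalent quantitative substitute), the quantity you call $\sigma(k)$ cannot be computed, and the numerical verification at $k=13$ --- which in the paper amounts to checking $5+\sqrt{13}<\frac{9+\sqrt{77}}{2}$ and a monotonicity-of-derivatives argument for $k\ge 13$ --- cannot even be set up. The site-percolation remark at the end is fine, since theorems \ref{bdpc} and \ref{bdpu} both hold for site percolation.
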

\begin{thm}\label{3phRACpt}
If $\Pi$ is a compact right-angled polyhedron in $\HHH$, then for $\CG$ the Cayley graph of reflection group of $\Pi$ with the standard generating set,\footnote{Cf.~section \ref{secprelim} and hypothesis \ref{PiG}.} we have
$$\pc(\CG)<\pu(\CG)$$
for bond and site Bernoulli percolation on $\CG$.
\end{thm}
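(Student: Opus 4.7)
The plan is to reduce to a single boundary case by combining Theorem~\ref{3phgen} with a dedicated analysis for $12$-faced polyhedra. A classical consequence of Andreev's theorem is that in any compact right-angled polytope in $\HHH$ every face is a right-angled hyperbolic polygon and hence has at least $5$ sides; together with Euler's relation $\sum_F |F| = 6k - 12$ for a simple $3$-polytope with $k$ faces, this forces $k \ge 12$, with equality only for the right-angled regular dodecahedron. Consequently, whenever $\Pi$ has $\ge 13$ faces, Theorem~\ref{3phgen} applies directly and gives $\pc(\CG) < \pu(\CG)$ for both bond and site Bernoulli percolation, and only the dodecahedral case $k = 12$ needs to be treated separately.

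For the dodecahedral case I would follow the same overall scheme as in Theorem~\ref{3phgen}: combine an upper estimate on $\pc(\CG)$ coming from the growth of the Cayley graph with a lower estimate on $\pu(\CG)$ coming from the slow growth of simple cycles through a fixed vertex. The right-angled presentation, in which every non-trivial relator has the form $(s_i s_j)^2 = 1$ for a commuting pair of reflections, gives sharper control than is available for a generic Coxeter $12$-hedron: the random walk on $\CG$ can be analysed through the commutation graph (here the edge graph of the dodecahedron), yielding an improved upper bound on its spectral radius. This bound, combined with Theorem~\ref{thmgs} and Remark~\ref{g<=gs<=rt}, produces a correspondingly sharp upper bound on the simple-cycle growth rate $\gs$. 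Theorem~\ref{gr>=} (applicable since $12 \ge 6$) supplies the matching lower bound on $\mathrm{gr}(\CG)$. Feeding these two estimates into the standard non-uniqueness criterion, which compares $p$ with reciprocals of $\mathrm{gr}(\CG)$ and of the spectral-radius-type quantity $\rt$, yields the desired strict inequality.

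The main obstacle is quantitative rather than structural: the generic bounds used for Theorem~\ref{3phgen} become borderline at $k = 12$---which is precisely why that theorem requires $k \ge 13$---so the proof must genuinely exploit the right-angled relations to pull the two thresholds apart. I would expect a short explicit computation for the dodecahedron at the end, verifying that the gap between the lower bound on $\mathrm{gr}(\CG)$ and the upper bound on $\rt$ is large enough to force $\pc(\CG) < \pu(\CG)$. The site-percolation version should follow by an entirely parallel argument, with the site analogues of the bond-percolation bounds used throughout.
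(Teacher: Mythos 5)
Your reduction is exactly the paper's: for $k\ge 13$ invoke Theorem~\ref{3phgen}, observe that a compact right-angled polyhedron in $\HHH$ has every face with at least $5$ sides and hence $k\ge 12$ (the paper gets this via Lemma~\ref{Delta} plus a parity argument rather than your Euler count, but both are fine), and then treat $k=12$ separately. The overall scheme for $k=12$ --- an improved upper bound on $\rt$, fed through Theorem~\ref{thmgs} and Remark~\ref{g<=gs<=rt} to bound $\gs$, compared against the growth rate $\frac{k-4+\sqrt{(k-4)^2-4}}{2}=4+\sqrt{15}$ --- is also the paper's.

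The genuine gap is that you never produce the improved bound on $\rt$, which is the entire mathematical content of the remaining case. The generic estimate of Lemma~\ref{rhogen} gives $\rt\le\frac{k+17}{3}=\frac{29}{3}$ at $k=12$, whence $\gs\lesssim 8.35 > 4+\sqrt{15}\approx 7.87$, so the comparison fails --- this is precisely why Theorem~\ref{3phgen} stops at $k\ge 13$. The paper closes the gap with Lemma~\ref{rhoRACpt}, $\rt(\CG)\le\frac{k}{2}+3\frac{1}{10}=9.1$, proved by rerunning Gabber's lemma (Corollary~\ref{Gabcor}) with new weights $c_1=5$, $c_2=2$, $c_3=1$ and, crucially, the geometric input of Lemma~\ref{Delta} (each face has at most $\frac{k-1}{2}$ sides) combined with Claim~\ref{eN'} to bound $q_2(v)+q_3(v)$; this yields $\gs\le\frac{91+\sqrt{3881}}{20}<4+\sqrt{15}$, a margin of only about $0.2$. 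Your proposal gestures at ``analysing the random walk through the commutation graph'' of the dodecahedron, but gives no mechanism by which that analysis produces a quantitative spectral-radius bound, and no such method appears in the paper. Since the whole case lives or dies on whether $\rt$ can be pushed below roughly $9.3$, leaving this step as ``a short explicit computation I would expect at the end'' is not a proof sketch of the hard part --- it is the hard part.
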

Proofs of these theorems are completed in sections \ref{secgen} and \ref{secRACpt}, respectively, although they use some facts established in section \ref{secbasic} and the appendices.
\begin{rem}
In the setting of either thm.~\ref{3phgen} or thm.~\ref{3phRACpt} we have $\pc(\CG)\ge\frac{1}{k-1}>0$, which is well-known (see e.g.~\cite[thm.~1.33]{Grim} or \cite[prop.~7.11]{LP}). Also, e.g.~if $\Pi$ in that setting is compact, then $\CG$ is a Cayley graph of a finitely presented group with one end, so from \cite[thm.~10]{BB} we have $\pu(\CG)<1$ for bond percolation. So in  such situation, theorems \ref{3phgen} and \ref{3phRACpt} give $3$ phases of Bernoulli bond percolation on $\CG$, which provides a picture analogous to that from \cite{BS} for $2$-dimensional case (where also $3$ such non-trivial phases are established).
\end{rem}
In section \ref{secbasic} an introductory version of theorem \ref{3phgen} as well as \ref{3phRACpt} is proved:
\begin{thm}\label{3phbasic}
If $\Pi$ is a compact right-angled polyhedron in $\HHH$ with $k\ge 18$ faces, then for $\CG$ the Cayley graph of reflection group of $\Pi$ with the standard generating set,\footnote{Cf.~section \ref{secprelim} and hypothesis \ref{PiG}.} we have
$$\pc(\CG)<\pu(\CG)$$
for bond and site Bernoulli percolation on $\CG$.
\end{thm}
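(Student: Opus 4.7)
The plan is to combine the two auxiliary estimates announced in the abstract---Theorem~\ref{gr>=} (a lower bound on the growth rate of $\CG$ when $\Pi$ has at least $6$ faces) and Theorem~\ref{thmgs} (an upper bound on the simple-cycle growth rate $\gs$ in terms of the spectral radius)---and to use them to separate $\pc$ from $\pu$. First I would record that $\CG$ is $k$-regular with $k \ge 18$, vertex-transitive, and non-amenable: the reflection group acts properly and cocompactly on $\delta$-hyperbolic $\HHH$, so it is word-hyperbolic and hence non-amenable, and its standard generating set consists of $k$ involutions.

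For the lower bound on $\pu$, I would bound the spectral radius $\rho(\CG)$ of simple random walk from above, using the right-angled Coxeter structure (the only short relations among the standard generators are of length $2$ and $4$) and invoke Theorem~\ref{thmgs} to obtain an explicit upper bound $\gs(\CG) \le h(\rho(\CG))$ on the exponential growth of simple cycles through a fixed vertex $o$. For any $p$ slightly below $1/\gs(\CG)$, the expected number of simple cycles of $\op$ through $o$ is finite; a first-moment/Peierls argument then shows that two distinct infinite clusters cannot be merged other than along simple cycles, giving a lower bound $\pu(\CG) \ge p_2$ for some explicit $p_2$ determined by $\gs(\CG)$.

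For the upper bound on $\pc$, I would use the growth bound $\gr(\CG) \ge g(k)$ supplied by Theorem~\ref{gr>=} together with the specific hyperbolic-geometric structure of $\CG$ to locate an embedded sub-structure supporting percolation at some parameter $p_1 < p_2$---for example, an embedded subtree of branching number close to $g(k)$, or a quasi-isometric copy of a non-amenable planar tiling carried by a face-adjacency pattern of $\Pi$. Comparison of $\op$ with percolation on this sub-structure then yields $\pc(\CG) \le p_1 < p_2 \le \pu(\CG)$. The site-percolation case follows by the same scheme with only notational changes, consistent with the remark in the introduction.

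The principal obstacle lies in the $\pc$ upper bound: a lower bound on exponential growth of $\CG$ does not by itself force a small $\pc$, since in general one only has the branching-number inequality $\pc \ge 1/\mathrm{br}(\CG)$ in the opposite direction. The proof must therefore extract from $g(k)$ an honest sub-structure of $\CG$ with sufficient isoperimetry to percolate at $p_1 < p_2$, which is where the specific geometry of compact right-angled polyhedra in $\HHH$ is genuinely used. The numerical threshold $k \ge 18$ should then drop out of the final quantitative comparison: it is the smallest $k$ for which the growth-driven $p_1$ provably falls strictly below the cycle-driven $p_2$, i.e.\ for which $g(k)$ is large enough relative to $h(\rho(\CG))$ to close the gap.
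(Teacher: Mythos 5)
Your high-level strategy---squeeze $\pc(\CG)\le 1/\gr(\CG)$ strictly below $1/\gs(\CG)\le\pu(\CG)$---is the right one, but you have misplaced the difficulty and left the entire quantitative content unexecuted. The step you single out as the ``principal obstacle'' is not an obstacle: for Cayley graphs the inequality $\pc(\CG)\le 1/\gr(\CG)$ is a known general theorem of Lyons (quoted in the paper as theorem~\ref{bdpc}), so no ad hoc embedded sub-structure, no extra isoperimetry, and no ``specific geometry of compact right-angled polyhedra'' is needed for the $\pc$ bound; your claim that growth alone does not control $\pc$ is true for general graphs but false here. Similarly, $\pu(\CG)\ge 1/\g(\CG)$ is Schramm's theorem (theorem~\ref{bdpu}), which one cites rather than re-derives by a Peierls argument. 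What actually carries the proof, and what your proposal does not supply, is the pair of explicit numerical estimates: (i) an upper bound on $\rt(\CG)$ (hence on $\gs$ and $\g$), obtained from Gabber's lemma (corollary~\ref{Gabcor}) together with the geometric fact that at most $3$ edges point toward the origin at any vertex (proposition~\ref{r<=3}, via Andreev's theorem), giving $\rt(\CG)\le 2\sqrt{3(k-3)}$ (lemma~\ref{rhobasic}); and (ii) the exact growth rate $\gr(\CG)=\tfrac12\bigl(k-4+\sqrt{(k-4)^2-4}\bigr)$ in the compact right-angled case (theorem~\ref{gr}, from the nerve being a flag triangulation of $\SS$). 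The threshold $k\ge 18$ is exactly where the first quantity drops below the second; without bounds of this kind your $p_1<p_2$ is an unproved assertion, not a proof.

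A secondary mismatch: the paper's proof of this particular theorem uses neither theorem~\ref{thmgs} nor theorem~\ref{gr>=}; it compares $\rt$ directly with the exact formula of theorem~\ref{gr}, via $\g\le\gs\le\rt$. The sharper tools you invoke are deployed only later, in theorems~\ref{3phgen} and~\ref{3phRACpt}, where they lower the threshold to $k\ge 13$ and $k\ge 12$; combining them as you propose would yield a better constant than $18$, which signals that your reconstruction is not the intended ``basic'' argument. That is a stylistic point; the substantive gap is the missing spectral-radius and growth estimates and the spurious difficulty attributed to the $\pc$ bound.
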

It uses relatively narrow set of facts, but it is less general than theorems \ref{3phgen}, \ref{3phRACpt}. More precise remarks on how particular tools used in this \partw{} can improve theorem \ref{3phbasic}, are in section \ref{secrems}. It is worth noting that the methods used in this \partw{} do not give inequality $\pc(\CG)<\pu(\CG)$ for some regular tiling of $\HH$, e.g.~with right-angled regular pentagons. In the appendices, two of those tools (theorems \ref{thmgs}, \ref{gr>=}), which are a bit interesting theorems themslves, are obtained.
\par All three theorems \ref{3phgen}, \ref{3phRACpt}, \ref{3phbasic} use estimates for $\rt(\CG)$ (see notations~\ref{Cas}) inspired by \cite{Nag} and based on Gabber's lemma (see subsection \ref{subsecGab}). However, the basis for proving them are the following bounds for $\pc$ and $\pu$.
\begin{df}\label{dfgr}
For a transitive graph $\CG$, I define the (\emd{upper}) \emd{growth rate} of $\CG$
$$\gr(\CG)=\limsup_{n\to\infty}(\#B(n))^{1/n},$$
where $B(n)$ is the closed ball of radius $n$ around some fixed vertex of $\CG$.%\footnote{In fact, the limit $\lim_{n\to\infty}(\#B(n))^{1/n}$ exists for transitive $\CG$ because of Fekete lemma applied for $(\log(\#B(n)))_{n=0}^\infty$, as here $\#B(n+m)\le\#B(n)\#B(m)$---but I do not use that in this \partw.}% Also, for any group $G$ with finite generating set $S$ I define its \emd{growth rate} with respect to $S$ just as the growth rate of the Cayley graph of $(G,S)$ and I denote it by $\gr(G,S)$.
\end{df}
\begin{thm}[\protect{\cite[remarks in section 3]{Lyo95}}]\label{bdpc}
For bond and site Bernoulli percolation on any Cayley graph $\CG$,
$$\pc(\CG)\le\frac{1}{\gr(\CG)}.$$
\end{thm}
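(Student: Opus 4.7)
Fix a vertex $o\in\CG$. My plan is a tree-comparison argument: build a carefully chosen spanning subtree of $\CG$ whose branching number matches $\gr(\CG)$, and then invoke the known percolation threshold formula on trees.

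First I would build a breadth-first search spanning tree $T$ of $\CG$ rooted at $o$: for each $v\neq o$, pick one neighbor $\pi(v)$ with $d(o,\pi(v))=d(o,v)-1$ and let the edges of $T$ be $\{\{v,\pi(v)\}:v\neq o\}$. By construction the $n$-th sphere around $o$ coincides in $T$ and in $\CG$, so $\gr(T)=\gr(\CG)$. Coupling bond or site percolation on $T\subseteq\CG$ with the corresponding model on $\CG$ (same $p$-coins) makes any infinite $T$-cluster an infinite $\CG$-cluster, so $\pc(\CG)\le\pc(T)$ for both models.

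On the tree $T$, the classical result $\pc(T)=1/\mathrm{br}(T)$ holds (for both bond and site percolation), where $\mathrm{br}(T)$ is the branching number. Since $\mathrm{br}(T)\le\gr(T)$ is automatic, it remains to prove $\mathrm{br}(T)\ge\gr(T)$. For this I would invoke the Furstenberg--Lyons theorem that every \emph{sub-periodic} tree satisfies $\mathrm{br}(T)=\gr(T)$, and verify sub-periodicity of the BFS tree of a Cayley graph: for each $v\in T$, left-multiplication in the group by the element sending $v$ to $o$ is a graph automorphism of $\CG$, and with a compatible choice of the parent map $\pi$ it sends the descendant subtree at $v$ injectively into $T$ at the root. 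Assembling the pieces gives $\pc(\CG)\le\pc(T)=1/\mathrm{br}(T)=1/\gr(T)=1/\gr(\CG)$.

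The main obstacle is the sub-periodicity verification in the last step: one has to choose (or recognise the existence of) a parent function $\pi$ compatible with the left group action, so that descendant subtrees of $T$ embed into $T$ itself via automorphisms of $\CG$, not merely into $\CG$. Once this compatibility is secured, the tree machinery closes the argument uniformly for both bond and site Bernoulli percolation.
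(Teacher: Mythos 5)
Your proposal is correct and is essentially the paper's own proof: the paper simply defers to the proof of \cite[thm.~7.19]{LP}, which is exactly this construction of a canonical (lexicographically chosen) geodesic spanning tree $T$ with $\gr(T)=\gr(\CG)$, subperiodicity giving $\mathrm{br}(T)=\gr(T)$ via the Furstenberg--Lyons theorem, and $\pc(T)=1/\mathrm{br}(T)$; the canonical choice of the parent map that you flag as the remaining obstacle is precisely what makes the descendant subtrees embed at the root. The paper's only added remark is the observation you also make implicitly, namely that $\pc(T)=1/\mathrm{br}(T)$ holds for site as well as bond percolation because the root cluster has the same law in both models on a tree.
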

\begin{rem}
This theorem is present in the idea of proof of theorem saying that for Cayley graph of a group with exponential growth, $p_c<1$---see remarks in \cite[section 3]{Lyo95}. Also, a proof of that, where the inequality $\pc(\CG)\le1/\gr(\CG)$ is evident (formally, for Bernoulli bond percolation), is the proof of \cite[theorem 7.19]{LP}. Essentially, the same proof of \cite[theorem 7.19]{LP} is valid for site Bernoulli percolation, because in that case theorem 5.15 from that book (originally theorem 6.2 from \cite{Lyo90}) is valid as well, as the distribution of the cluster containing the root, for both bond and site Bernoulli percolation on a rooted tree, is exactly the same, if we only declare the root to be open in the case of site percolation, by an easy exercise.
\end{rem}
\begin{rem}\label{pathsinnonsmpl}
For graphs which are not simple (i.e.~admit loops and multiple edges), I always consider paths not as sequences of vertices, but rather as sequences of edges. In particular, I distinguish between any two paths passing through the same vertices, in the same order, but through different edges.
\end{rem}
\begin{denot}
For any graph $\CG$, not necessarily simple, $o$---its vertex and $n\in\mathbb{N}$, I define $a_n(\CG,o)$ to be the number of cycles (considered as paths; see the above remark) of length $n$ in $\CG$ starting from $o$, which do not use any vertex more than once. Then I put
$$\g(\CG,o) = \limsup_{n\to\infty}\sqrt[n]{a_n(\CG,o)}$$
(I may drop ``$o$'', when $\CG$ is transitive, because then it does not play any role).
\end{denot}
\begin{thm}[O.~Schramm, \protect{\cite[thm.~3.9]{Lyo00}}]\label{bdpu}
For bond and site Bernoulli percolation on any transitive graph $\CG$, we have
$$\pu(\CG)\ge\frac{1}{\g(\CG)}.$$
\end{thm}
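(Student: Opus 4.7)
The plan is to prove the contrapositive: for $p>\pu(\CG)$, show $p\cdot\g(\CG)\ge 1$. The natural tool is a two--copy argument combined with a cycle--extraction trick. Let $\omega_1,\omega_2$ be two independent samples of $\omega^{(p)}$. Because $p>\pu$ on the transitive graph $\CG$, the Lyons--Schramm theorem on the uniqueness phase provides a uniform lower bound $\Pr(x\leftrightarrow y\text{ in }\omega^{(p)})\ge c(p)>0$ for all vertices $x,y$; by independence, $\Pr(x\leftrightarrow y\text{ in both }\omega_1\text{ and }\omega_2)\ge c(p)^2>0$ uniformly in $x,y$.

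The cycle--extraction step goes as follows. For each pair of vertices, fix a canonical self-avoiding path $\pi_i(x,y)\subseteq\omega_i$ (say the lex-first shortest such path) whenever $x,y$ lie in the same cluster of $\omega_i$. On the event that both paths exist and $\pi_1(o,v)\ne\pi_2(o,v)$, the edge-set $\pi_1\cup\pi_2$ supports a self-avoiding cycle through $o$. The complementary bad event $\{\pi_1=\pi_2\}$ forces a specific self-avoiding path to be present in both copies, and so has total probability controlled by $\sum_n c_n p^{2n}$, where $c_n$ is the number of self-avoiding walks from $o$ of length $n$; this is negligible for $p$ small. Summing the lower bound $c(p)^2$ over vertices $v$ in successive balls around $o$, one obtains a quantitative lower bound on the expected number of distinct self-avoiding cycles through $o$ appearing in $\omega_1\cup\omega_2$.

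For the comparison, note that any fixed self-avoiding cycle $C$ of length $m$ through $o$ satisfies $\Pr(C\subseteq\omega_1\cup\omega_2)=(2p-p^2)^m$, so the expected number of such cycles is at most $\sum_m a_m(2p-p^2)^m$. Combining the two estimates forces $(2p-p^2)\cdot\g(\CG)\ge 1$. The main obstacle, as I see it, is sharpening this to the stated bound $p\cdot\g(\CG)\ge 1$: the factor $2p-p^2$ is an artefact of working with the union of two copies, and one needs to refine the argument---for instance by extracting cycles from a single sample $\omega^{(p)}$ using directly the structure of its unique infinite cluster at $p>\pu$, or by replacing $\omega_1\cup\omega_2$ with a more careful two-copy construction whose edge probability matches $p$ rather than $2p-p^2$---in order to match the sharp exponent in the definition of $\g(\CG)$.
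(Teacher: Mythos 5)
The paper itself does not prove this statement: it is quoted from \cite[thm.~3.9]{Lyo00} with attribution to O.~Schramm and no argument is given, so your attempt can only be judged on its own terms. On those terms it has genuine gaps, the central one being the cycle-extraction step. If $\pi_1$ and $\pi_2$ are two \emph{distinct} self-avoiding open paths from $o$ to $v$, their union does contain a self-avoiding cycle, but not necessarily one \emph{through $o$}: if both paths leave $o$ along the same edge and diverge only later, every cycle in $\pi_1\cup\pi_2$ avoids $o$. Since $a_n$ and $\g(\CG)$ count cycles through the fixed vertex $o$, the comparison with $\sum_m a_m(2p-p^2)^m$ then says nothing about the cycles you produce. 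A second, related gap: even where a cycle through $o$ can be forced (e.g.\ by making the two paths use different first edges at $o$), nothing makes it \emph{long} --- the two paths may reunite at a vertex adjacent to $o$ --- so summing $c(p)^2$ over $v$ in growing balls does not lower-bound the expected number of \emph{distinct} cycles: all those events can be witnessed by a single short cycle. To conclude $\sum_m a_m q^m=\infty$ you need cycles through $o$ of unbounded length with probability bounded away from $0$ (for instance a cycle through both $o$ and $v$, i.e.\ two internally vertex-disjoint open paths, which has length at least $2\,d(o,v)$), and the union of two independent paths does not deliver that. A smaller repairable point: bounding the bad event by $\sum_n c_n p^{2n}$ and calling it ``negligible for $p$ small'' is inapplicable, since here $p>\pu$ is not small; the correct estimate is $\Pr(\pi_1=\pi_2)=\sum_P\Pr(\pi_1=P)\Pr(\pi_2=P)\le\sup_P\Pr(P\subseteq\omega_2)\le p^{d(o,v)}\to0$.

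Finally, even if all of the above were repaired, you candidly arrive at $(2p-p^2)\,\g(\CG)\ge1$ for $p>\pu$, i.e.\ $\pu(2-\pu)\ge1/\g(\CG)$, which is strictly weaker than the stated $\pu\ge1/\g(\CG)$, and the proposed ``refinements'' are not arguments. This is not a cosmetic loss here: the paper uses the sharp constant quantitatively (the cases $k=13$ in theorem \ref{3phgen} and $k=12$ in theorem \ref{3phRACpt} compare $\gs(\CG)$ and $\gr(\CG)$ that differ by far less than a factor of $2$), so a bound off by a factor of $2-\pu$ would not yield $\pc<\pu$ for any of the polyhedra treated. Obtaining the exponent $p^m$ rather than $(2p-p^2)^m$ requires producing the long cycles through $o$ inside a \emph{single} Bernoulli($p$) configuration (exploiting uniqueness and insertion tolerance there), not in the union of two independent copies.
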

\par A quite wide introduction to percolation theory is in \cite{Grim} and \cite{LP}. For theory of Coxeter groups and reflections groups, I used \cite{Dav} and \cite[chap.~IV]{Mask}. An introduction to hyperbolic spaces $\Hn$ is present in \cite{BH}, while an introduction to simplicial complexes---in \cite[chap.~I.7]{BH} and \cite[sect.~A.2]{Dav}.
\paragraph{Acknowledgements}
I express my gratitude to my advisor, Jan Dymara, for many important and helpful remarks and words of advice and for leading me through thos work. I am also grateful to Itai Benjamini and Ruth Kellerhals for a helpful correspondence.
\section{Preliminaries}\label{secprelim}
\begin{denot1}
In this \work{} I assume that $0\in\mathbb{N}$ (i.e.~$0$ is a natural number).
\end{denot1}
%%%Poni\.zej definicje z cz\k{e}\'sci ,,Estimate...''
\begin{denot}\label{Cas}
Let $\CG$ be any graph, not necessarily simple, and $o$---its vertex. For any $n\in\mathbb{N}$, I define (in the context of remark \ref{pathsinnonsmpl})
\begin{itemize}
\item $C_n(\CG,o)$ to be the number of oriented cycles\footnote{Paths and cycles in graph theory can use some vertices and edges more than once} of length $n$ in $\CG$ starting from $o$;
\item $a^*_n(\CG,o)$ to be the number of oriented cycles of length $n$ in $\CG$ starting from $o$ without backtracks, i.e.~without pairs of consecutive passes through the same edge of $\CG$, forth and back. (I do not regard backtracks cyclically, i.e.~passing first and last the same edge is not a backtrack.)
\end{itemize}
%For a sequence $x=(x_n)_n$ of non-negative numbers, I define its growth rate as
%$$\mathrm{gr}(x)=\limsup_{n\to\infty}\sqrt[n]{x_n}.$$
Basing on that, I define:
\begin{itemize}
\item $\rt(\CG,o) = \limsup_{n\to\infty}\sqrt[n]{C_n(\CG,o)}$,
\item $\gs(\CG,o) = \limsup_{n\to\infty}\sqrt[n]{a^*_n(\CG,o)}$.
\end{itemize}
Sometimes I may drop ``$o$'' or ``$(\CG,o)$'' in all the above notations, if the context is obvious. Also, I will drop ``$o$'', when $\CG$ is transitive, because then the above values do not depend on $o$.
\end{denot}
\begin{rem}\label{g<=gs<=rt}
Note that for any $\CG$ and its vertex $o$,
$$a_n(\CG,o)\le a^*_n(\CG,o)\le C_n(\CG,o),$$
for $n\in\mathbb{N}$,\footnote{It is true for natural $n\ge 3$, to be strict.} hence also
$$\g(\CG,o) \le \gs(\CG,o) \le \rt(\CG,o).$$
It follows that, thanks to theorems \ref{bdpc} and \ref{bdpu}, it is sufficient to prove one of the inequalities $\gs(\CG)<\gr(\CG)$, $\rt(\CG)<\gr(\CG)$ in order to show that $\pc(\CG)<\pu(\CG)$.
\end{rem}
%%%Pozosta\l{}e definicje
For definition and description of models of $\HHH$, see e.g.~chapters I.2 and I.6 of \cite{BH}.
\begin{df}\mbox{}
\begin{itemize}
\item By a half-space I mean by default a closed half-space.
\item By a \emd{convex polyhedron} in $\HHH$ I mean an intersection of finitely many half-spaces in $\HHH$ with non-empty interior.
\item For a plane supporting a convex polyhedron, i.e.~intersecting it so that it is left on one side of the plane, I call that intersection a \emd{face}, an \emd{edge} or a \emd{vertex}, if it is $2$-, $1$- or $0$-dimensional, respectively.
%\item a face of a polyhedron is a non-degenerate (as a polygon) intersection of the polyhedron with a plane tangent to it; (what is a polygon?)
%\item an edge of a polyhedron is a non-empty intersection of its faces;
\item Two distinct faces $F_1$, $F_2$ of polyhedron are \emd{neighbours} (or: do \emd{neighbour}), iff they have an edge in common; we then write $F_1\sim F_2$.
\item The \emd{degree} of a face $F$ of a polyhedron, denoted by $\deg(F)$, is the number of its neighbours.
%\item a vertex of polyh. is a point shared by at least three faces of it;
%\item \textbf{or just:} by a polyhedron in $\HHH$ I mean not only bounded polyhedron, but possibly a polyhedron with ideal or hyperideal vertices. At once, I demand of a polygon that it has only finitely many faces and every face neighbours (i.e.~has some edges in common) at least three others.
\item A convex polyhedron is \emd{Coxeter} iff the dihedral angle at each edge of it is of the form $\pi/m$, where $m\ge 2$ is natural.
\end{itemize}
\end{df}
\begin{denot}
For a graph $\CG$, by $V(\CG)$, $E(\CG)$ I denote the sets of vertices and edges of $\CG$, respectively.
\end{denot}
\section{Description of the polyhedron and the graph}\label{secPiG}
The graphs that I am going to consider in this \work{} are Cayley graphs of reflection groups of Coxeter polyhedra in $\HHH$. Below I introduce these notions:
\begin{df}
The \emd{reflection group} of a convex polyhedron $\Pi$ is the subgroup of the whole group $\Isom(\HHH)$ of isometries of $\HHH$, generated by the hyperbolic reflections in planes of faces of $\Pi$.
\par A \emd{Coxeter matrix} over a finite set $S$ is a matrix $(m(s,t))_{s,t\in S}$ with elements in $\mathbb{N}\cup\{\infty\}$ such that
\begin{equation*}
m(s,t)=\{
\begin{array}{rl}
1,&\textrm{ if }s=t\\
\ge 2,&\textrm{ otherwise.}
\end{array}\right.
\end{equation*}
A \emd{Coxeter system} is a pair $(G,S)$, where $G$ is a group generated by $S$ with the presentation
\begin{equation*}
G=\langle S|s^2:s\in S; (st)^{m(s,t)}: s,t\in S, m(s,t)\neq\infty\rangle,
\end{equation*}
where $(m(s,t))_{s,t\in S}$ is a Coxeter matrix. We then call $G$ a \emd{Coxeter group}.
\end{df}
\begin{rem}
If $\Pi$ is a Coxeter polyhedron in $\HHH$, $S$ is the set of reflections in planes of its faces and $G=\langle S\rangle$ is its reflection group, then $(G,S)$ is a Coxeter system. The underlying Coxeter matrix $(m(s,t))_{s,t\in S}$ is connected with the dihedral angles of $\Pi$ as follows: for neighbouring faces $F_1,F_2$ of $\Pi$ and $s_1,s_2\in S$---the corresponding generators (respectively)---the dihedral angle between these faces equals $\pi/m(s_1,s_2)$. It is a special case of Poicar\'e's Polyhedron Theorem---theorem IV.H.11 in \cite{Mask} (one has to use also the proposition in IV.I.6 there).
%Moreover, the orbit $G\cdot\Pi$ of $\Pi$ under $G$ is a tiling of $\HHH$ and $G$ acts (from the left(czy to nie jest jasne?)) simply transitively on the set of tiles.
Moreover, from that theorem, $\Pi$ (or $\sint\Pi$, depending on the convention) is a fundamental polyhedron for $G$ (see \cite[defnition IV.F.2]{Mask}; roughly speaking, that is why the orbit $G\cdot\Pi$ is called a \emd{tiling} of $\HHH$ and $v\Pi,v\in G$---its \emd{tiles}) and because of that, the only stabiliser in $G$ of any interior point of $\Pi$ is the identity. Hence, fixing $o\in\sint\Pi$, one can identify $G$ with the orbit of $o$ by the obvious bijection. (Cf.~also \cite[theorem 6.4.3]{Dav} for compact case.)
\end{rem}
\begin{hypot}\label{PiG}
For the rest of this \work{}, let $\Pi$ be a Coxeter polyhedron in $\HHH$, let $S$ be the set of reflections in planes of its faces and let $G=\langle S\rangle$ be its reflection group. Let $\CG$ be the Cayley graph of the Coxeter system $(G,S)$, i.e.~the undirected (simple) graph with $V(\CG)=G$ such that an edge joins two vertices $v,w$ in $\CG$ iff $v=ws$ for some $s\in S$. I am going to think of $G=V(\CG)$ as of the orbit of a fixed point $o\in\sint\Pi$, so that $V(\Pi)\subseteq\HHH$, and $o$ is the identity element of $G$. (These hypotheses are valid unless indicated otherwise.)
\end{hypot}
\begin{rem}
The graph $\CG$ is dual to the tiling $G\cdot\Pi$ in the sense that each $v\in V(\CG)=G$ naturally corresponds with the tile $v\Pi$ (hence one may denote $v\Pi$ by $v\d$) and an edge $\{v,w\}\in E(\CG)$ indicates that the tiles $v\Pi$ and $w\Pi$ have a face in common. Indeed, the tiles $v^{-1}v\Pi=\Pi$ and $v^{-1}w\Pi$ share a face, say the face of $\Pi$ corresponding to $s$ for some $s\in S$, iff $s=v^{-1}w$).
\end{rem}
\begin{denot1}\label{ed}
I use the above remark to introduce the following notion: for $e=\{v,w\}\in E(\CG)$ (or $e=(v,w)\in\oE(\CG)$---see notation \ref{oE}), let $e\d$ be the common face of $v\Pi$ and $w\Pi$.
Then, we say that $e$ traverses or passes through the face $e\d$.\footnote{Though, the geodesic segment between the endpoints of $e$ (in $\HHH$) does not need to literally cross the face $e\d$, so the introduced notion has a combinatorial nature.}
\end{denot1}
\section{A basic approach}\label{secbasic}
In this section I present a basic approach to proving the existence of non-trivial middle phase of Bernoulli bond and site percolation on some cocompact reflection groups of $\HHH$. The price for its relative easiness, in comparison to more developed versions I present later in this \partw{}, is that it works only for cocompact right-angled reflection groups with at least $18$ generators. Still, there is a quite elegant example of such group:
\begin{exmp}
Let $\Pi$ be a hyperbolic right-angled truncated icosahedron (i.e.~with the combinatorics of a classical football) in $\HHH$. Then its reflection group is a cocompact right-angled reflection group with $32$ generators.
\end{exmp}
\begin{thm*}[recalled theorem \ref{3phbasic}]
If $\Pi$ is a compact right-angled polyhedron in $\HHH$ with $k\ge 18$ faces, then for $\CG$ the Cayley graph of reflection group of $\Pi$ with the standard generating set, we have
$$\pc(\CG)<\pu(\CG)$$
for bond and site Bernoulli percolation on $\CG$.
\end{thm*}
%\begin{thm*}[recalled theorem \ref{3phbasic}]
%If $\Pi$ is a compact right-angled polyhedron in $\HHH$ with $k\ge 18$ faces, then in the setting of hypothesis \ref{PiG}, $\rt(\CG)<\gr(\CG)$, hence (see rem.~\ref{g<=gs<=rt})
%$$\pc(\CG)<\pu(\CG)$$
%for bond and site Bernoulli percolation on $\CG$.
%\end{thm*}
Before I prove this theorem, I establish some general facts about reflection groups in $\HHH$, useful in the whole \work{}.
%(Definicje potrzebne do lem. Gabbera).
\subsection{Gabber's lemma}%(czy nie szkodzi, że tak oddzielam d-d od tw.?)
\label{subsecGab}
I estimate $\rt$ of a reflection group of a hyperbolic polyhedron using Gabber's lemma.
\begin{df}\label{sprad}
For any (simple) graph $\CG$, by its \emd{spectral radius}, denoted by $\varrho(\CG)$, I mean the spectral radius of the simple random walk on $\CG$ starting from its fixed vertex $o$\footnote{The simple random walk on $\CG$ starting from $o$ is a Markov chain with state space $V(\CG)$, where at each step probability of transition from vertex $x$ to its neighbour is $1/\deg(x)$---see \protect{\cite[section I.1]{Woe}}}, i.e.
\begin{equation*}
\varrho(\CG) = \limsup_{n\to\infty}\sqrt[n]{p^{(n)}(o,o)},
\end{equation*}
where for natural $n$, $p^{(n)}(o,o)$ denotes the probability that the simple random walk on $\CG$ starting from $o$ is back at $o$ after $n$ steps. Note that spectral radius of $\CG$ does not depend on the choice of $o$ (as for $o,o'\in V(\CG)$, there is $C>0$ such that for $n\in\mathbb{N}$, $p^{(n+\mathrm{dist}(o,o'))}(o',o') \ge Cp^{(n)}(o,o)$ and \emph{vice-versa}, whence $p^{(n)}(o,o)$ and $p^{(n)}(o',o')$ has the same asymptotic behaviour).
\end{df}
\begin{rem}\label{remrrt}
It is easily seen that for any (simple) $k$-regular graph $\CG$ and for $n\in\mathbb{N}$, $p^{(n)}(o,o)=C_n(\CG,o)/k^n$, so $\varrho(\CG)=\rt(\CG)/k$.
\end{rem}
\begin{denot}\label{oE}
For an undirected (simple) graph $\CG$, let $\oE(\CG)$ denote the set of edges of $\CG$ given orientations. (It can be formalised by $\oE(\CG)=\{(v,w):\{v,w\}\in E(\CG)\}$.) For $e\in\oE(\CG)$, let $\bar{e}$ denote the edge inverse to $e$ and let $e_+$,$e_-$ denote the end (head) and the origin (tail) of $e$, respectively.
\end{denot}
\begin{lem}[Gabber, see \protect{\cite[prop.~1]{Bart}\footnote{They formulate this lemma for regular graphs, but their proof is valid in the below generality. Also, cf.~\cite[lemma of Gabber, p.~2]{Nag}.}} for the proof]
Let $\CG$ be an unoriented, infinite, locally finite graph and let a function $F:\oE(\CG)\to\mathbb{R}_+$ satisfy
$$F(e)=F(\bar{e})^{-1}$$
for each edge $e\in\oE(\CG)$. If there exists a constant $C_F>0$ such that for each vertex $v$ of $\CG$,
$$\frac{1}{\deg(v)}\sum_{e_+=v} F(e)\le C_F,$$
then
$$\varrho(\CG)\le C_F.$$
\end{lem}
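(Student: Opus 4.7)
The plan is to identify $\varrho(\CG)$ with the operator norm of the simple random walk transition operator on an appropriate weighted $\ell^2$ space, and then to bound that norm by $C_F$ via a single application of the Cauchy--Schwarz inequality in which $F$ supplies the weights.

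First I would recall that the SRW operator $P$, given by $(Pf)(v) = \frac{1}{\deg(v)}\sum_{w\sim v} f(w)$, is self-adjoint on the Hilbert space $\ell^2(V(\CG),\deg)$ equipped with the inner product weighted by vertex degrees (this is just reversibility of the SRW with stationary measure $\deg$). By the spectral theorem, the operator norm $\|P\|$ coincides with the spectral radius of $P$, which in turn equals $\limsup_n p^{(n)}(o,o)^{1/n} = \varrho(\CG)$. So it suffices to show $\|Pf\|^2 \le C_F^2 \|f\|^2$ for every $f\in\ell^2(V(\CG),\deg)$.

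To estimate $\|Pf\|^2 = \sum_v \frac{1}{\deg(v)}\bigl|\sum_{e_+=v} f(e_-)\bigr|^2$, I would apply Cauchy--Schwarz in the form
$$\Bigl|\sum_{e_+=v} F(e)^{1/2}\cdot F(e)^{-1/2}f(e_-)\Bigr|^2 \le \Bigl(\sum_{e_+=v} F(e)\Bigr)\Bigl(\sum_{e_+=v} F(e)^{-1}|f(e_-)|^2\Bigr),$$
whereupon the hypothesis bounds the first factor by $\deg(v)\,C_F$. Summing over $v$ gives $\|Pf\|^2 \le C_F \sum_{e\in\oE(\CG)} F(e)^{-1}|f(e_-)|^2$. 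Now I would use $F(e)^{-1}=F(\bar e)$ and reindex by the involution $e\mapsto\bar e$, which swaps $e_+$ and $e_-$; this converts the sum into $\sum_v|f(v)|^2 \sum_{e_+=v} F(e)$, and one more appeal to the hypothesis yields the bound $C_F \sum_v \deg(v)|f(v)|^2 = C_F \|f\|^2$. Combining the two estimates gives $\|Pf\|^2 \le C_F^2 \|f\|^2$, hence $\varrho(\CG)=\|P\|\le C_F$.

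The main obstacle, beyond guessing that $F(e)^{1/2}$ is the right weight for Cauchy--Schwarz, is really just the bookkeeping: the Cauchy--Schwarz step produces an asymmetric expression in which the weight factor sits at $e_-$ while the $F$-summation is indexed by $e_+$, and it is precisely the relation $F(\bar e)=F(e)^{-1}$ that restores the symmetry required to invoke the hypothesis the second time. Checking that this maneuver does not require any summability assumption on $F$ itself (it does not, since all sums are finite at each vertex as $\CG$ is locally finite) should complete the argument.
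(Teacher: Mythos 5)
Your proof is correct, and it is essentially the standard argument that the paper itself does not reproduce but cites from \cite[prop.~1]{Bart}: bound the norm of the transition operator on $\ell^2(V(\CG),\deg)$ by splitting each term as $F(e)^{1/2}\cdot F(e)^{-1/2}f(e_-)$, applying Cauchy--Schwarz, and using $F(\bar e)=F(e)^{-1}$ to restore the symmetry needed for the second application of the hypothesis. The only remark worth adding is that for the stated conclusion the easy direction $\varrho(\CG)\le\Vert P\Vert$ suffices (it follows from $p^{(n)}(o,o)\deg(o)=\langle P^n\delta_o,\delta_o\rangle\le\Vert P\Vert^n\deg(o)$), so the full identification of $\varrho(\CG)$ with the operator norm via the spectral theorem need not be justified.
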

In this \partw{} I am going to use the notion of $\rt$ rather than $\varrho$, as the former seems more convenient here. So I reformulate the above lemma, using remark \ref{remrrt}:
\begin{cor}\label{Gabcor}
Let $\CG$ and $F$ be as in the above lemma and assume that $\CG$ is regular. Then, if a constant $C_F>0$ satisfies
$$\sum_{e_+=v} F(e)\le C_F$$
for each vertex $v$ of $\CG$, then
$$\rt(\CG)\le C_F.$$
\end{cor}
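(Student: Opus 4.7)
The corollary is an immediate consequence of combining Gabber's lemma with Remark \ref{remrrt}, so the plan is essentially a two-line rescaling argument.

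First I would set $k = \deg(v)$, constant over $v \in V(\CG)$ by the $k$-regularity assumption. Dividing the hypothesis $\sum_{e_+=v}F(e)\le C_F$ by $k$ on both sides gives
$$\frac{1}{\deg(v)}\sum_{e_+=v} F(e) \le \frac{C_F}{k}$$
for every vertex $v$. Since $F$ satisfies $F(e)=F(\bar e)^{-1}$ and $\CG$ is infinite and locally finite, the hypotheses of Gabber's lemma are met with the constant $C_F/k$ in place of $C_F$, which yields $\varrho(\CG) \le C_F/k$.

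Second, I would invoke Remark \ref{remrrt}, which for a $k$-regular graph gives the identity $\varrho(\CG) = \rt(\CG)/k$. Multiplying the bound from the previous paragraph by $k$ gives $\rt(\CG) \le C_F$, as required.

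There is no real obstacle here — the whole content of the corollary is cosmetic: it drops the $1/\deg(v)$ factor from the hypothesis of Gabber's lemma and simultaneously passes from the spectral radius $\varrho$ to the closed-walk exponent $\rt$, which is the quantity used throughout the rest of the paper. The only point worth double-checking is that $\CG$ being regular in particular guarantees it is locally finite, so Gabber's lemma genuinely applies without any extra assumption.
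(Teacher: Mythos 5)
Your proof is correct and is exactly the paper's argument: the paper derives the corollary from Gabber's lemma by dividing the hypothesis by the constant degree $k$ and then converting $\varrho$ to $\rt$ via remark \ref{remrrt}. Nothing further is needed.
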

\begin{df}\label{OG}\label{lgth}
Let $\CG$ be Cayley graph of any Coxeter system $(G,S)$ and $o$ be the identity element of $G$. I denote by $l$ the \emd{length function} on $(G,S)$, i.e.~for $v\in V(\CG)$, $l(v)$ is the graph-theoretic distance from $v$ to $o$ (or: the least length of a word over $S$ equal $v$ in $G$). Let $O(\CG)$ denote the \emd{standard orientation} of edges of $\CG$ arising from $l$:
\begin{equation}
O(\CG):=\{e\in\oE(\CG):l(e_+)>l(e_-)\}.
\end{equation}
\end{df}
\begin{rem}\label{l>l}
Note that in the setting of definition \ref{lgth}, we never have $l(e_+)=l(e_-)$ for $e\in\oE(\CG)$, because otherwise we would obtain a word over $S$ of odd length, trivial in $G$, hence a product of conjugates of the Coxeter relations, which are of even lengths, a contradiction.
\par So, for $e\in E(\CG)$, exactly one of the two oriented edges corresponding to $e$ is in $O(\CG)$.
\end{rem}
\begin{df}
From now on, whenever I mention or use the orientation of an unoriented edge of $\CG$, I mean, by default, the orientation defined by $O(\CG)$. Particularly, by an edge of $\CG$ \emd{passing to} (or \emd{from}) vertex $v\in V(\CG)$ I mean an edge $e$ from $E(\CG)$ with $e_+=v$ (respectively $e_-=v$), when oriented according to $O(\CG)$.
\end{df}
\begin{rem}
Assume hypothesis \ref{PiG} (particularly, on $\CG$). Whenever $e\in \oE(\CG)$, the plane $P$ of $e\d$ separates the endvertices of $e$, and separates one of them from $o$---let us call it $v$ and the other endvertex---$u$. Then, it appears that $(u,v)\in O(\CG)$. To see that,
% consider $\CG$ as a geometric realisation (with the edges realised as homeomorphic copies of interval $[0;1]$) with the natural geodesic metric, in which the lengths of edges are all equal to $1$. 
take a path $p$ in $\CG$ from $o$ to $v$ of minimal length: $l(v)$. Here, consider $p$ as a sequence of vertices. Then, reflect in $P$
%\footnote{To be precise, here I mean a reflection as an isometry of $\CG$ induced by that reflection.}
every fragment of $p$ lying outside $P$ (from the point of view of $o$), obtaining a path 
%(with some backtracks just in a middle of an edge of $\CG$)
(which stays in one vertex for some steps, such steps contributing $0$ to the path length) from $o$ to $u$ with the length strictly less than $l(v)$. %(I consider every half an edge being of length $1/2$).
Hence a geodesic in $\CG$ from $o$ to $u$ has length $l(u)<l(v)$, so $(u,v)\in O(\CG)$.
\end{rem}
\begin{denot}
For $v\in V(\CG)$, in the setting of definition \ref{OG}, let $r(v)$ denote the number of edges of $\CG$ passing to $v$, i.e.
$$r(v)=\#\{e\in O(\CG):e_+=v\}.$$
Note that $r(v)>0$ for $v\neq o$ (and $r(o)=0$).
For natural $i$, let $q_i(v)$ be the number of edges passing from $v$ to vertices with $r(\cdot)=i$; formally
$$q_i(v)=\#\{e\in O(\CG):e_-=v,r(e_+)=i\}.$$
Note that always $q_0(v)=0$. I will prove in proposition \ref{r<=3} that in the setting of hypothesis \ref{PiG} we have $r(v)\le 3$ (whence $q_i(v)=0$ for $i>3$).
\end{denot}
\subsection{Geometry of $\Pi$ and $\CG$}\label{geomPiG}
The only assumption on $\Pi$ in this subsection is hypothesis \ref{PiG}. Here I am going to use some geometrical facts on the tiling of $\HHH$ for estimating $r(v)$ and for using corollary \ref{Gabcor} (see the previous subsection):
\begin{prop}\label{ePi}
For any two faces of $\Pi$, either they neighbour or they lie in disjoint planes.
\end{prop}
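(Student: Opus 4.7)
The plan is to argue by contradiction using the Coxeter presentation of $G$ furnished by Poincar\'e's polyhedron theorem (cf.\ the remark following the definition of a Coxeter system). Suppose $F_1, F_2$ are two distinct faces of $\Pi$ that do not neighbour, but whose supporting planes $P_1, P_2$ meet inside $\HHH$. Let $s_1, s_2 \in S$ denote the reflections in $P_1, P_2$ respectively.

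The first task is to show that $s_1 s_2$ has infinite order in $G$. By Poincar\'e's polyhedron theorem applied to the Coxeter polyhedron $\Pi$, the group $G$ is given by the Coxeter presentation whose defining relations are the involutions $s^2$ ($s \in S$) together with one relation $(st)^{m(s,t)} = e$ for each edge of $\Pi$ shared by $F_s$ and $F_t$, with $\pi/m(s,t)$ the corresponding dihedral angle. Since $F_1$ and $F_2$ share no edge, there is no such relation constraining $s_1 s_2$, i.e.~$m(s_1, s_2) = \infty$. By the standard theory of Coxeter groups, this forces $s_1 s_2$ to have infinite order in $G$.

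The second task is to contradict this from the geometric configuration. Since $P_1 \cap P_2$ is non-empty in $\HHH$ and $P_1 \neq P_2$, the intersection is a hyperbolic geodesic $\ell$, fixed pointwise by both $s_1$ and $s_2$. Fixing any $p \in \ell$, the subgroup $\langle s_1, s_2 \rangle$ is contained in the $\Isom(\HHH)$-stabilizer of $p$, which is isomorphic to $O(3)$ and in particular compact. As $G$ is discrete (also a consequence of Poincar\'e's theorem), $\langle s_1, s_2 \rangle$ is a discrete subgroup of a compact group, hence finite, so $s_1 s_2$ has finite order --- contradicting the previous paragraph. Therefore $P_1 \cap P_2 = \emptyset$ in $\HHH$, as required. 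The only step requiring care is the identification $m(s_1, s_2) = \infty$ for non-neighbouring face pairs, which is precisely the content of Poincar\'e's polyhedron theorem in the Coxeter setting; the geometric part is then routine.
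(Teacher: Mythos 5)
Your proof is correct, but it follows a genuinely different route from the paper's. The paper deduces the proposition from Andreev's theorem (cited as the theorem from \cite{Andr}): if the planes of two faces meet, then by that theorem the faces themselves meet, hence share at least a vertex; and since a vertex of a polyhedron with non-obtuse dihedral angles lies on exactly three faces, two faces meeting only at a vertex is impossible, so they share an edge. You instead translate the question into group theory: Poincar\'e's polyhedron theorem gives the Coxeter presentation of $G$ with one relation per edge of $\Pi$, so non-neighbouring faces correspond to $m(s_1,s_2)=\infty$ and hence (by the standard fact that $\langle s,t\rangle$ is dihedral of order exactly $2m(s,t)$) to $s_1s_2$ of infinite order; on the other hand, intersecting planes force $\langle s_1,s_2\rangle$ into the compact point-stabilizer, and discreteness of $G$ (again from Poincar\'e) makes this subgroup finite. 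Both arguments rest on results imported from outside the paper: yours leans entirely on Poincar\'e's theorem (which the paper already invokes to establish the Coxeter system structure, so you add no new external dependency), while the paper's leans on Andreev's theorem, which has the advantage of simultaneously delivering Proposition \ref{vPi} about triples of faces --- your method would handle that case too, but only after a separate argument about when three reflections generate a finite group. One small point worth making explicit in your write-up: distinct faces of a convex polyhedron lie in distinct planes (a supporting plane meets $\Pi$ in a single face), which justifies your assumption $P_1\neq P_2$ and hence that $P_1\cap P_2$ is a geodesic.
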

\begin{prop}\label{vPi}
For any three faces of $\Pi$ whose planes have non-empty intersection, some vertex of $\Pi$ belongs to all those faces.
\end{prop}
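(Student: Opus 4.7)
First, since $P_1\cap P_2\cap P_3\neq\emptyset$ implies each pair $P_i\cap P_j$ is non-empty, Proposition~\ref{ePi} gives that each pair of faces among $F_1,F_2,F_3$ neighbours. Denote the three shared edges by $e_{ij}:=F_i\cap F_j$ and the geodesic lines containing them by $\ell_{ij}:=P_i\cap P_j$. I would then show that $P_1\cap P_2\cap P_3=\{p\}$ is a single point: otherwise the intersection would contain a whole geodesic line, which---being contained in all three planes---would coincide inside $P_1$ with both $\ell_{12}$ (as it also lies in $P_2$) and with $\ell_{13}$ (as it also lies in $P_3$). Then $e_{12}$ and $e_{13}$ would be two edges of the convex polygon $F_1\subseteq P_1$ supported on the same line, forcing $e_{12}=e_{13}$ and hence $F_2=F_3$---a contradiction.

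The main step is to prove $p\in\Pi$; combined with $p\in P_i$ this yields $p\in\Pi\cap P_i=F_i$ for $i=1,2,3$, making $p$ the common vertex claimed. Let $s_i\in G$ denote the reflection in $P_i$ and put $H:=\langle s_1,s_2,s_3\rangle\le G$. Since $H$ fixes $p$ and $G$ is a discrete subgroup of $\Isom(\HHH)$, the stabiliser of $p$ in $G$ is finite, so $H$ is finite; equivalently, $\{s_1,s_2,s_3\}$ is a spherical subset of $S$. I would then invoke the standard correspondence in the theory of Coxeter polyhedra (see e.g.~\cite{Dav}, or Poincar\'e's polyhedron theorem as formulated in \cite{Mask}), whereby each spherical subset $T\subseteq S$ of size $k$ gives a codimension-$k$ face of $\Pi$ fixed pointwise by $G_T$; for $|T|=3$ this produces a vertex of $\Pi$ lying in $P_1\cap P_2\cap P_3=\{p\}$, so the vertex is $p$ itself.

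The principal obstacle is this last step: making the spherical-subset-to-vertex correspondence precise in the generality of Hypothesis~\ref{PiG} (where $\Pi$ need not be compact). A more directly geometric substitute, available at least when $\Pi$ is compact, is Andreev's theorem, which forbids a ``prismatic $3$-circuit'' of faces (three faces that pairwise neighbour but share no common vertex) from satisfying $\sum_{i<j}\pi/m_{ij}>\pi$; since three planes meeting at an interior point of $\HHH$ cut out a spherical triangle at that point with angle sum $>\pi$, such a triple of faces must share a common vertex, necessarily $p$.
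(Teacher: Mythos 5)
Your reduction is sound as far as it goes: the three planes are distinct, their common intersection is a single point $p$ (your argument via $\ell_{12}=\ell_{13}$ forcing $F_2=F_3$ is correct), and the whole proposition does come down to showing $p\in\Pi$. The observation that $\{s_1,s_2,s_3\}$ is spherical is also correct. The genuine gap is exactly where you suspect it: the implication ``$T\subseteq S$ spherical of size $3$ $\Rightarrow$ the corresponding faces share a vertex'' is not an independent standard fact you can import here --- it is essentially the proposition itself. (What is standard from Poincar\'e/Davis is the \emph{other} direction: the stabiliser of a point of $\Pi$ is generated by the reflections in the faces through that point. Going from a spherical subset back to a nonempty intersection of faces is the nontrivial content; indeed, later in this paper, in the proof that the nerve embeds into $\SS$, precisely this implication is \emph{deduced from} Proposition~\ref{vPi} together with the $\mathrm{CAT}(0)$ fixed-point theorem. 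So invoking it to prove Proposition~\ref{vPi} is circular relative to the paper's logical structure.) Your fallback via Andreev's characterisation theorem (no prismatic $3$-circuit with angle sum $>\pi$) is a correct argument, but only where that theorem applies, i.e.\ for compact $\Pi$; Hypothesis~\ref{PiG} makes no compactness assumption, and Proposition~\ref{vPi} is needed in that generality for Theorem~\ref{3phgen}.

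The paper closes this gap by citing a different result of Andreev --- the lemma from \cite{Andr} stated immediately after the two propositions --- which says that for a finite-sided convex polyhedron with non-obtuse dihedral angles and any family of faces $F_i$ with planes $P_i$, one has $\dim\bigcap_i\clcH{F_i}=\dim\bigcap_i\clcH{P_i}$ (with the conventions of definition~\ref{dffromAndr}). From this, ``the planes meet in $\HHH$'' forces ``the faces meet in $\HHH$,'' and a common point of three distinct faces is necessarily a vertex; no compactness is used. If you want to keep your argument self-contained, you would have to prove this plane-versus-face intersection lemma (or at least its rank-$3$ case) yourself rather than appeal to the spherical-subset correspondence; as written, the key step is either circular or restricted to the compact case.
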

Those facts are easy consequences of the theorem from \cite{Andr}. I formulate it below in a version for $\HHH$ and a finite-sided convex polyhedron (just as in hypothesis \ref{PiG}) for simplicity. Before, I need some definitions:
\begin{df}\label{dffromAndr}
Consider the Klein unit ball model of $\HHH$ with its ideal boundary $\bdi\HHH$, which is the unit sphere in $\mathbb{R}^3$. Then for $A\subseteq\HHH$, we denote by $\clcH{A}$ its closure in $\cH:=\HHH\cup\bdi\HHH$. Then, a single point in $\bdi\HHH$ is claimed to have dimension $-1$ (as opposed to a point in $\HHH$) and the empty set---to have dimension $-\infty$.
\end{df}
\begin{thm}[a version of the theorem from \cite{Andr}]
Let $\Pi$ be a convex polyhedron in $\HHH$ (as in hypothesis \ref{PiG}) whose all dihedral angles are non-obtuse and let $(F_i)_{i\in I}$ be any family of its faces and for $i\in I$, $P_i$ be the plane of $F_i$. Then
$$\dim\bigcap_{i\in I}\clcH{F_i} = \dim\bigcap_{i\in I}\clcH{P_i}.$$
\end{thm}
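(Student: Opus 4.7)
The inclusion $\clcH{F_i}\subseteq\clcH{P_i}$ gives for free the trivial direction
\[\dim\bigcap_{i\in I}\clcH{F_i}\le\dim\bigcap_{i\in I}\clcH{P_i},\]
so the task is to prove the reverse inequality. Since $\Pi$ is finite-sided, $I$ may be taken finite, and I would proceed by induction on $|I|$.

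The base cases $|I|\in\{1,2,3\}$ are essentially the combinatorial content of the theorem of \cite{Andr} for non-obtuse polyhedra, which is what I would invoke as a black box: for $|I|=1$ both sides equal $2$; for $|I|=2$, whenever two face-planes meet (in $\HHH$ or on $\bdi\HHH$) the corresponding two faces must share an edge, possibly ideal, so the dimensions match; for $|I|=3$, a common point of three face-planes is forced to be a (proper or ideal) vertex of $\Pi$ shared by all three faces. These are precisely the statements that later yield Propositions \ref{ePi} and \ref{vPi}.

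For the inductive step $|I|\ge 4$, I would first note that $\bigcap_{i\in I}\clcH{P_i}$ can have dimension at most $0$: if it contained a full line, every pair of these face-planes would contain that line, so by the $|I|=2$ case every pair of faces would share the line as an edge, but any edge of $\Pi$ lies on exactly two faces. Hence $\bigcap_{i\in I}\clcH{P_i}$ is either empty or a single point $x$. In the latter case, for every triple $i_1,i_2,i_3\in I$ one has $x\in\bigcap_{k=1}^3\clcH{P_{i_k}}$, so by the $|I|=3$ base case $x$ is a (possibly ideal) vertex of $\Pi$ lying on each of $F_{i_1},F_{i_2},F_{i_3}$. Varying the triple and using that a vertex determines, and is determined by, the set of faces meeting at it, one obtains $x\in\bigcap_{i\in I}\clcH{F_i}$, so the dimensions agree.

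The main obstacle will be a careful case analysis around ideal points on $\bdi\HHH$ (dimension $-1$ in Definition \ref{dffromAndr}): one must check that the theorem of \cite{Andr} applies (or is readily adapted) to the possibly non-compact polyhedra of Hypothesis \ref{PiG} with ideal vertices, so that the base cases $|I|=2,3$ genuinely cover the situation when the common point of the planes lies on $\bdi\HHH$ rather than in $\HHH$. Once this is secured, the remainder of the argument is bookkeeping on dimensions and on the incidence structure of $\Pi$, with no further geometric content beyond Andreev.
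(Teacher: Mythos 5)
The paper does not prove this statement at all: it is quoted verbatim as ``a version of the theorem from \cite{Andr}'' and used as a black box, the only accompanying argument being the subsequent remark that derives Propositions \ref{ePi} and \ref{vPi} from it. Your proposal is therefore not an alternative to a proof in the paper; and as a standalone proof it is essentially circular. You delegate the cases $|I|\le 3$ to ``the combinatorial content of the theorem of \cite{Andr}'', but Andreev's theorem \emph{is} the general statement for arbitrary families of faces, so all the geometric substance (non-obtuse angles forcing planes that meet to meet along actual faces/edges/vertices of $\Pi$, including the ideal-point conventions of Definition \ref{dffromAndr}) sits inside your black box. What remains is only the reduction of general $I$ to $|I|\le 3$, which the paper never needs because it only ever applies the theorem to pairs and triples of faces.

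Even taken on its own terms, that reduction has a gap. Knowing $\bigcap_{i\in I}\clcH{P_i}=\{x\}$ and applying the $|I|=3$ case to a triple $i_1,i_2,i_3$ only tells you that $\clcH{F_{i_1}}\cap\clcH{F_{i_2}}\cap\clcH{F_{i_3}}$ has the same dimension as $\clcH{P_{i_1}}\cap\clcH{P_{i_2}}\cap\clcH{P_{i_3}}$; if the latter is a line through $x$, the former is an edge on that line, and nothing yet forces that edge to contain $x$. To conclude $x\in\bigcap_i\clcH{F_i}$ you must first show that \emph{some} triple of the planes meets exactly in $\{x\}$ (which does follow: if every triple contained a line, all the planes would share one line, contradicting $\dim=0$), identify $x$ as a vertex of $\Pi$ on those three faces, and then, for each remaining index $j$, rule out the sub-case $P_{i_1}\cap P_{i_2}\cap P_j$ being a line by the fact that an edge of a convex polyhedron lies on exactly two faces. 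This is all fixable bookkeeping, but it is not automatic from ``varying the triple'', and it is precisely the kind of case analysis the paper avoids by citing Andreev's result in its full generality.
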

\begin{rem}
In order to conclude propositions \ref{ePi} and \ref{vPi} from the above theorem, one has to observe that
$$\bigcap_{i\in I}F_i\neq\emptyset \Leftrightarrow \dim\bigcap_{i\in I}\clcH{F_i}\ge 0 \Leftrightarrow \dim\bigcap_{i\in I}\clcH{P_i}\ge 0 \Leftrightarrow \bigcap_{i\in I}P_i\neq\emptyset$$
using definition \ref{dffromAndr} and the convexity of $\bigcap_{i\in I}\clcH{F_i}$ and $\bigcap_{i\in I}\clcH{P_i}$ in the Klein model. That immediately gives proposition \ref{vPi}. To conclude proposition  \ref{ePi} as well, observe that if two faces of $\Pi$ intersect, then they have at least common vertex. Then, they must have a common edge, because otherwise the polyhedral angle of $\Pi$ at their common vertex would have more than three faces, which is impossible for a polyhedron with non-obtuse dihedral angles. So we obtain proposition \ref{vPi}.
\end{rem}
\begin{rem} %\label{nonobtuse}
By default, I consider the planes of faces of $\Pi$ oriented outside the polyhedron, i.e. the angle between them is $\pi$ minus the angle between their normal vectors, which I consider always pointing outside the polyhedron.
\end{rem}
\begin{prop}\label{v:r>=3}
For any $v\in V(\CG)$ and any three edges passing to $v$, they correspond to three faces of the tile $v\Pi$ which have a vertex in common.
\end{prop}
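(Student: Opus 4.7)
The plan is to translate the statement into Coxeter-theoretic language and then reduce it to Proposition~\ref{vPi}. Each of the three edges passing to $v$ has the form $(vs,v)$ for a uniquely determined generator $s\in S$, and by the description of the dual tiling (cf.\ notation~\ref{ed}) the corresponding face $e\d$ of $v\Pi$ is $vF_s$, where $F_s\subset\Pi$ denotes the face whose associated reflection is $s$. Hence the three edges single out three distinct generators $s_1,s_2,s_3\in S$ with $l(vs_i)<l(v)$ for each $i$; and since $v$ acts on $\HHH$ by isometry, the faces $vF_{s_1}, vF_{s_2}, vF_{s_3}$ share a vertex of $v\Pi$ iff $F_{s_1},F_{s_2},F_{s_3}$ share a vertex of $\Pi$.

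By Proposition~\ref{vPi}, it then suffices to show that the three reflection planes $P_{s_1},P_{s_2},P_{s_3}$ meet in $\HHH$. Here I would invoke the classical fact from Coxeter theory (see e.g.~\cite{Dav}) that the descent set $D(v):=\{s\in S:l(vs)<l(v)\}$ is always \emph{spherical}, i.e.\ generates a finite subgroup of $G$; in particular $\langle s_1,s_2,s_3\rangle$ is a finite group of isometries of $\HHH$. Since $\HHH$ is a complete $\mathrm{CAT}(0)$ space, such a group has a common fixed point $x\in\HHH$ by the Cartan/Bruhat--Tits fixed-point theorem, and $x$ lies in each $P_{s_i}$ as the fixed-point set of $s_i$. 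Proposition~\ref{vPi} then yields a vertex of $\Pi$ in $F_{s_1}\cap F_{s_2}\cap F_{s_3}$, and transporting by the isometry $v$ produces the desired common vertex of the three faces of $v\Pi$.

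The main (in fact only) non-routine ingredient is the sphericity of $D(v)$. While standard in Coxeter theory, the three-generator version needed here can be argued directly via the strong exchange condition: starting from reduced expressions of $v$ ending successively in $s_1$, $s_2$ and $s_3$, one iteratively builds a reduced expression $v=w\cdot w_0$ in which $w_0$ is the longest element of $\langle s_1,s_2,s_3\rangle$, forcing this parabolic subgroup to be finite. An alternative route would be to dispense with Coxeter theory and argue purely geometrically that three faces of $v\Pi$ whose supporting planes each separate $v$ from $o$ must have pairwise common edges through a common vertex, using convexity of $v\Pi$ and Andreev's theorem; I expect this to be the more delicate approach, which is why I would favour the Coxeter-theoretic one.
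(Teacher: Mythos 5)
Your proof is correct, but it takes a genuinely different route from the paper's. The paper argues purely geometrically: it considers the three closed half-spaces $H_1,H_2,H_3$ containing $v\Pi$ whose boundary planes carry the three faces in question, notes that none of them contains $o$ and none contains another (so the boundary planes must meet pairwise), and then excludes an empty triple intersection of the planes by observing that $H_1\cap H_2\cap H_3$ would then be a bi-infinite triangular prism, forcing $H_1\supseteq\HHH\sm(H_2\cup H_3)\ni o$, a contradiction; Proposition~\ref{vPi} then upgrades the common point of the planes to a common vertex of the faces. You instead translate to Coxeter-theoretic language: the three edges passing to $v$ correspond to three descent generators of $v$, the descent set is spherical by a standard lemma (provable via the exchange condition, as you sketch; see \cite{Dav}), and the Cartan/Bruhat--Tits fixed-point theorem for the complete $\mathrm{CAT}(0)$ space $\HHH$ yields a common point of the three reflection planes, after which Proposition~\ref{vPi} finishes as in the paper. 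Your argument is sound and non-circular (sphericity of descent sets is a general Coxeter fact, independent of propositions \ref{v:r>=3} and \ref{r<=3}), and it is worth noting that the paper itself uses exactly your ``finite parabolic implies the planes meet'' step, via \cite[cor.~II.2.8]{BH}, later on in the proof that the nerve embeds in $\SS$ --- so you import no machinery foreign to the paper, and your route is shorter and dimension-independent. What the paper's elementary argument buys in exchange is a concrete geometric picture of why an empty triple intersection (the prism configuration) is incompatible with all three planes separating $v\Pi$ from $o$, using nothing beyond convexity and Proposition~\ref{vPi}.
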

\begin{proof}
Assume that some three edges pass to a vertex $v$. Let $H_1,H_2,H_3$ be the (closed) half-spaces containing $v\cdot\Pi$ whose boundaries contain the faces $F_1,F_2,F_3$, respectively, corresponding to those edges. Because none of $H_1,H_2,H_3$ contains $o$ and none of them contains another, their boundaries must intersect pairwise. Further, if those all boundaries had no common point, the intersection $A=H_1\cap H_2\cap H_3$ would be a bi-infinite triangular prism and $H_1$ would contain $\HHH\sm (H_2\cup H_3)$ together with $o$, which contradicts the assumptions. So the planes of $F_1,F_2,F_3$ have a point in common---call it $p$ and by proposition \ref{vPi}, $F_1,F_2,F_3$ share $p$ as a common vertex.
\end{proof}
\begin{prop}\label{r<=3}
For any $v\in V(\CG)$, we have $r(v)\le 3$.
\end{prop}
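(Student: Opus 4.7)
The plan is to argue by contradiction. Suppose $r(v) \ge 4$, and choose four distinct edges $e_1,\dots,e_4 \in O(\CG)$ with $(e_i)_+ = v$, giving four faces $F_i := e_i\d$ of the tile $v\Pi$ and four closed half-spaces $H_i$ bounded by the planes $P_i$ of the $F_i$, each containing $v\Pi$ but not $o$ (by the remark preceding the definition of $r$).

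Applying proposition~\ref{v:r>=3} to each triple $\{i,j,k\} \subset \{1,2,3,4\}$ produces a vertex $p_{ijk}$ of $v\Pi$ on $F_i \cap F_j \cap F_k$. Since $\Pi$ has only non-obtuse dihedral angles, at most three faces of $v\Pi$ meet at any one vertex (as noted in the remark after proposition~\ref{vPi}), so each $p_{ijk}$ is incident to exactly $F_i, F_j, F_k$, and the four vertices $q_i := p_{\{1,2,3,4\} \setminus \{i\}}$ are pairwise distinct. They also cannot be coplanar: if they all lay in a plane $\Sigma$, then because $q_1 \notin P_1$ (otherwise $q_1$ would lie on four faces), the intersection $\Sigma \cap P_1$ would be precisely the line through $q_3, q_4$, namely $P_1 \cap P_2$; but then $q_2 \in \Sigma \cap P_1 \subseteq P_2$ would place $q_2$ on all four planes $P_1,\ldots,P_4$ and hence on four faces of $v\Pi$, a contradiction. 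For any pair $\{i,j\}$, the vertices $q_k, q_l$ with $\{k,l\} = \{1,2,3,4\} \setminus \{i,j\}$ both lie on the edge $F_i \cap F_j$ of $v\Pi$, so they are its two endpoints; consequently each $F_i$ is the triangle in $P_i$ with vertices $\{q_j : j \ne i\}$.

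Set $T := \mathrm{conv}\{q_1, q_2, q_3, q_4\}$, a non-degenerate tetrahedron with $T \subseteq v\Pi$ by convexity. Any point $p \notin T$ is separated from $T$ by one of the planes $P_i$ (since $T$ is precisely the bounded region cut out by these four planes), so $p \notin H_i$; thus $v\Pi \subseteq \bigcap_{i=1}^4 H_i \subseteq T$, forcing $v\Pi = T$. It remains to rule out $o \in \bigcap_{i=1}^4 H_i^c$. Passing to the Klein model of $\HHH$, this would require a Euclidean point strictly on the side of each $P_i$ opposite to the bounded Euclidean tetrahedron $T$. But for any bounded convex polytope $\{x \in \mathbb{R}^3 : a_i \cdot x \le b_i\}$, Gordan's theorem (via the triviality of the recession cone) supplies nonnegative scalars $\lambda_i$, not all zero, with $\sum_i \lambda_i a_i = 0$, and evaluating on an interior point gives $\sum_i \lambda_i b_i > 0$; a putative $p$ with $a_i \cdot p > b_i$ for all $i$ would then yield $0 = \sum_i \lambda_i a_i \cdot p > \sum_i \lambda_i b_i > 0$, a contradiction. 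Hence no such $o$ exists and $r(v) \le 3$. The main obstacle is this final convex-geometry step (together with the verification of non-degeneracy of $T$); everything else is a direct combinatorial analysis resting on proposition~\ref{v:r>=3} and on each vertex of $v\Pi$ being incident to exactly three faces.
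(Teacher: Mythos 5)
Your proof is correct, and it shares the paper's setup --- assume $r(v)\ge 4$, take the four half-spaces $H_i\supseteq v\Pi$ with $o\notin H_i$, and apply proposition \ref{v:r>=3} to triples of the corresponding faces --- but it reaches the contradiction by a genuinely different mechanism. The paper essentially uses only the vertex $p$ common to $F_1,F_2,F_3$: it forms the trihedral cone $A=H_1\cap H_2\cap H_3$ with apex $p$, observes that the open ray from $p$ pointing away from $o$ lies in $A$ (each $H_i$ has $p$ on its boundary and misses $o$) while $\bd H_4$ cuts $A$ in a bounded piece with $p\in\sint H_4$, so that ray must cross $\bd H_4$; since a line meets a plane at most once, the whole ray $\overrightarrow{po}$, and in particular $o$, then lies in $H_4$ --- contradiction. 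You instead exploit all four triples to reconstruct $v\Pi$ completely: the four vertices $q_i$ are distinct and affinely independent, each $F_i$ is the triangle $\mathrm{conv}\{q_j:j\ne i\}$, hence $v\Pi=\bigcap_i H_i$ is a compact tetrahedron, and then a Farkas/Gordan argument (triviality of the recession cone yields a nonzero nonnegative vanishing combination of the outward normals) shows that no point can lie strictly outside all four $H_i$. Both arguments are sound; the paper's route is shorter and avoids the bookkeeping of identifying all four vertices and checking non-coplanarity, while yours isolates a clean, reusable convex-geometry statement (no point lies strictly beyond every facet hyperplane of a bounded full-dimensional polytope) and makes explicit the forced tetrahedral shape of $v\Pi$, which echoes the tetrahedron observation the paper makes later in the proof of claim \ref{no4inL}.
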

\begin{proof}
Assume \emph{a contrario} that $r(v)\ge 4$. Then there are half-spaces $H_1,H_2,H_3,H_4$ corresponding to faces $F_1,F_2,F_3,F_4$ of $\Pi$ and separating $o$ from $\Pi$. Let $A=H_1\cap H_2\cap H_3$ (we know that it is a trihedral angle from the proof of proposition \ref{v:r>=3}).
\par $\bd H_4$ has to cross $\sint A$ in order to produce face $F_4$ and has to cross each of the edges of $A$ in exactly $1$ point other than $p$ (from proposition \ref{v:r>=3} and the fact that $H_4$ contains all the edges of $\Pi$ adjacent to $p$).
Note that $p\in\sint H_4$.
It is clear that the open half-line $po\sm\overrightarrow{po}$ of the line $po$ lies in $A$, so it crosses $\bd H_4$. Hence, $\overrightarrow{po}\subseteq H_4$ along with $o$, a contradiction.
%Let $a$ be the common point of the line $l=op$ and $\bd H_4$. Then, the half-line $l\sm\overrightarrow{ap}$ is outside $H_4$, so $\overrightarrow{ap}\subseteq H_4$ along with $o$, a contradiction.
\end{proof}
\begin{denot}
Now, I define the function $F$ for use of corollary \ref{Gabcor}: for $e\in O(\CG)$, let
$$F(e)=c_{r(e_+)},\quad F(\bar e)=\frac{1}{c_{r(e_+)}},$$
according to the condition from the corollary. Here $c_1,c_2,c_3>0$ are parameters (only three ones, as here always $0<r(e_+)\le 3$). I will write $(c_1,c_2,c_3)=\bar c$ for short. Let for $v\in V(\CG)$,
$$f_v(\bar c)=\sum_{e_+=v} F(e).$$
\end{denot}
In this setting
\begin{align}
f_v(\bar c)
&=\sum_{e\in O(\CG):e_+=v} c_{r(v)} + \sum_{e\in O(\CG):e_-=v} \frac{1}{c_{r(e_+)}}= \label{f(r,q)1}\\
&=r(v)c_{r(v)} + \sum_{i=1}^3 \frac{q_i(v)}{c_i},\label{f(r,q)2}
\end{align}
because $q_i(v)=0$ for $i>3$ (due to proposition \ref{r<=3}) and for $i=0$.
Recall that $k$ is the number of faces of $\Pi$.
\begin{lem}\label{rhobasic}%(może dziwnie, że ten lemat jest w podsekcji ,,Geometry...''?)
If $k\ge 6$, we have
$$\rt(\CG)\le 2\sqrt{3(k-3)}.$$
\end{lem}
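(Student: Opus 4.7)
The plan is to apply Corollary \ref{Gabcor} with the function $F$ defined in the paper, by choosing the three parameters $c_1, c_2, c_3$ so as to minimise the upper bound on $\max_v f_v(\bar c)$. My guess is that the optimal choice is the symmetric one $c_1=c_2=c_3=c$ for a single well-chosen constant $c$, which will reduce the problem to a one-variable optimisation.

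First I would use the fact that $\CG$ is $k$-regular to rewrite the identity from (\ref{f(r,q)2}). For any $v\in V(\CG)$, the edges at $v$ split into the $r(v)$ edges passing to $v$ and the $\sum_i q_i(v)$ edges passing from $v$, so $\sum_{i=1}^3 q_i(v)=k-r(v)$ (using Proposition \ref{r<=3} to kill the terms with $i>3$). Substituting $c_1=c_2=c_3=c$ then gives
\begin{equation*}
f_v(\bar c)=r(v)\,c+\frac{k-r(v)}{c}.
\end{equation*}

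The key observation is monotonicity in $r(v)$: as a function of the integer $r\in\{0,1,2,3\}$, the expression $rc+(k-r)/c$ is non-decreasing precisely when $c\ge 1$, because its discrete difference is $c-1/c$. So provided I can arrange $c\ge 1$, the maximum over all $v$ (including $v=o$, for which $r(o)=0$) is attained at $r(v)=3$, which is actually realised only for some vertices but serves as an upper bound regardless. Thus $f_v(\bar c)\le 3c+(k-3)/c$ uniformly in $v$.

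Finally I minimise $g(c):=3c+(k-3)/c$ over $c>0$ by calculus: the minimum occurs at $c=\sqrt{(k-3)/3}$, giving $g(c)=2\sqrt{3(k-3)}$. The compatibility condition $c\ge 1$ becomes $(k-3)/3\ge 1$, i.e.\ $k\ge 6$, which is exactly the hypothesis. Corollary \ref{Gabcor} then yields $\rt(\CG)\le 2\sqrt{3(k-3)}$. The only subtle point is ensuring the monotonicity argument covers the root vertex $v=o$ (where $r=0$), but this is automatic from the same inequality $c\ge 1$; no separate geometric argument about $o$ is needed. I do not anticipate any genuine obstacle — the whole argument is an immediate application of the preceding propositions plus a one-parameter optimisation.
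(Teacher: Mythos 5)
Your proposal is correct and follows essentially the same route as the paper: the paper also takes $c_1=c_2=c_3=\sqrt{(k-3)/3}$ and bounds $f_v(\bar c)=r(v)c+(k-r(v))/c$ by its value at $r(v)=3$ using exactly your monotonicity condition $c\ge 1/c$ (which is where $k\ge 6$ enters). The only cosmetic difference is that you derive the constant by optimisation while the paper states it outright.
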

\begin{rem}
The above bound has a better asymptotic behaviour that those in lemmas \ref{rhogen} and \ref{rhoRACpt}, but the latter ones give the inequality $\pc<\pu$ for a bit more values of $k$ (see section 
\ref{secrems}).
\end{rem}
\begin{proof}
I am going to choose values of $c_1,c_2,c_3$ giving a good upper bound for $\sup_{v\in V(\CG)}f_v(\bar c)$: let $c_1=c_2=c_3=\sqrt{\frac{k-3}{3}}$. Then from \eqref{f(r,q)1}
\begin{align*}
f_v(\bar c) &= r(v){\textstyle\sqrt{\frac{k-3}{3}}} + (k-r(v)){\textstyle\sqrt{\frac{3}{k-3}}}\le\\
&\le 3{\textstyle\sqrt{\frac{k-3}{3}}} + (k-3){\textstyle\sqrt{\frac{3}{k-3}}}=2\sqrt{3(k-3)},
\end{align*}
because here $\sqrt{\frac{k-3}{3}}\ge\sqrt{\frac{3}{k-3}}$ and $r(v)\le 3$.
\end{proof}
\begin{df}\label{nerve}
For a Coxeter system $(G,S)$, we call a subset $T\subseteq S$ \emd{spherical}, if the subgroup $\langle T\rangle$ is finite. The \emd{nerve} of $(G,S)$ is the abstract simplicial complex\footnote{For definitions of abstract simplicial complex and its geometric realisation, see \cite[section A.2]{Dav} or chapter I.7, especially the appendix of chapter I.7, of \cite{BH}.} whose simplices are all non-empty spherical subsets of $S$.\footnote{For a definition of nerve, see also \cite[section 7.1.]{Dav}.} % For a simplicial complex $C$ (abstract or geometric one), I denote by $V(C)$, $E(C)$ its sets of vertices and edges, respectively.
%Note also that $L$ does not need to be connected.
\end{df}
\begin{clm}\label{no4inL}
In the setting of hypothesis \ref{PiG}, no subset of $S$ of cardinality $4$ is spherical (i.e.~the nerve of $(G,S)$ contains no $3$-simplex).
\end{clm}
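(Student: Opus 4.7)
The plan is a proof by contradiction: suppose $T=\{s_1,s_2,s_3,s_4\}\subseteq S$ is spherical. For $i=1,\dots,4$, let $F_i$ denote the face of $\Pi$ corresponding to $s_i$ and let $P_i$ be the plane of $F_i$.

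First I would verify that the four planes $P_1,\dots,P_4$ have a common point of $\HHH$. Since $\langle T\rangle$ is a finite subgroup of $\Isom(\HHH)$, a standard averaging argument (e.g.\ taking the suitably normalized centroid, in the hyperboloid model, of the $\langle T\rangle$-orbit of any point) furnishes a fixed point $p\in\HHH$. Each $s_i$ fixes $p$ and is the reflection in $P_i$, so $p\in P_i$ for every $i$; in particular $\bigcap_{i=1}^{4}\clcH{P_i}\ni p$, and hence this intersection has dimension $\geq 0$.

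Second I would feed this into the theorem of Andreev cited above: by the chain of equivalences in the remark following it, $\bigcap_{i=1}^{4} F_i\neq\emptyset$. Being an intersection of four $2$-dimensional faces of a $3$-dimensional convex polyhedron, this set has dimension at most $0$, and being nonempty it must consist of a single vertex $v$ of $\Pi$ through which all four faces $F_1,\dots,F_4$ pass. Third, I would derive a contradiction from the local geometry at $v$. The link of $\Pi$ at $v$ (the intersection of $\Pi$ with a small sphere centred at $v$) is a convex spherical $n$-gon, where $n$ is the number of faces of $\Pi$ through $v$, so $n\geq 4$; its interior angles are the dihedral angles of $\Pi$ at the edges through $v$. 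Because $\Pi$ is Coxeter, each such angle has the form $\pi/m$ with integer $m\geq 2$, hence is at most $\pi/2$, so the total angle sum of the link is at most $n\pi/2$. On the other hand, the angle sum of a convex spherical $n$-gon strictly exceeds $(n-2)\pi$ (spherical Gauss--Bonnet). Combining the two bounds yields $(n-2)\pi<n\pi/2$, i.e.\ $n<4$, contradicting $n\geq 4$.

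The only mildly subtle step is the first, where one must translate the algebraic hypothesis that $\langle T\rangle$ is finite into the geometric statement that the four mirror planes share a point of $\HHH$; everything afterwards is a direct application of Andreev's theorem plus an elementary spherical-geometry bound. Note that the same argument shows more: no vertex of any Coxeter polyhedron in $\HHH$ can be incident to four or more faces.
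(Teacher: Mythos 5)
Your proof is correct, but it takes a genuinely different route from the paper's. The paper never invokes a fixed-point theorem: it first shows that every \emph{three} of the four generators have mirrors meeting in $\HHH$ (otherwise the three planes would bound a bi-infinite triangular prism and the reflections would generate an infinite group), so by proposition \ref{vPi} every three of the four corresponding faces share a vertex of $\Pi$; it then observes that all four faces cannot share a single vertex (no polyhedral angle with non-obtuse dihedral angles has more than three faces), deduces that $\Pi$ would then have to be a compact tetrahedron, and reaches a contradiction because the group generated by the four reflections would be infinite. You argue in the opposite direction: the finiteness of $\langle T\rangle$ gives a fixed point in $\HHH$ lying on all four mirrors, Andreev's theorem (in the version quoted in the paper, together with the chain of equivalences in the remark after it) then forces all four faces to meet in a common vertex, and the spherical Gauss--Bonnet bound on the link rules that configuration out. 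Your version is arguably tidier: it bypasses the paper's terse step ``one can easily see that $\Pi$ must be a compact tetrahedron'', it actually proves (via Gauss--Bonnet) the no-four-valent-vertex fact that the paper only asserts, and the fixed-point input is one the paper itself uses later (corollary II.2.8 of \cite{BH}, in the proof that the nerve embeds in $\SS$). What the paper's route buys is that it stays entirely within the elementary incidence toolkit already set up (propositions \ref{ePi} and \ref{vPi}) and needs no fixed-point theorem; the one point in your write-up worth a sentence more of care is the assertion that the intersection of the four faces has dimension at most $0$, which rests on the standard fact that four distinct facets of a $3$-dimensional convex polyhedron cannot share an edge.
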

\begin{proof}
First, note that any $3$ generators constituting a spherical subset of $S$, correspond to reflection planes with non-empty intersection (otherwise those planes would be the planes of some bi-infinite triangular prism and reflections in them would generate an infinite group). Hence, from proposition \ref{vPi} every $3$ faces corresponding to such $3$ generators share a vertex.
\par Now, assume \emph{a contrario} that there is a subset of $S$ of cardinality $4$. Then, from the above, every $3$ of the $4$ faces of $\Pi$ corresponding to those $4$ generators, have a common vertex. On the other hand, there is no common vertex of all those $4$ faces, because otherwise we would have a polyhedral angle with more than $3$ faces and non-obtuse dihedral angles, which is impossible. One can easily see that then $\Pi$ must be a compact tetrahedron, but then the group generated by those $4$ faces would be infinite, a contradiction.
\end{proof}
\subsection{Growth of right-angled cocompact reflection groups in $\HHH$}
\begin{clm}
Let $G$ be a (Coxeter) reflection group of a right-angled compact polyhedron $\Pi$ in $\HHH$ with the standard generating set $S$ and let $L$ be the nerve of $(G,S)$ (see def.~\ref{nerve}). Then $L$ is a flag\footnote{A simplicial complex is \emph{flag} iff any finite set of its vertices which are pairwise connected by edges spans a simplex.} triangulation of $\SS$ (i.e.~its geometric realisation is homeomorphic to $\SS$).
\end{clm}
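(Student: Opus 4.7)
The plan is to verify that $L$ is a flag simplicial complex of dimension $\le 2$, and then to identify $|L|$ with the CW-complex dual to $\partial\Pi$, which is a topological $2$-sphere.

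For the flag property, observe that in the right-angled case $m(s,t)\in\{2,\infty\}$ for $s\ne t$, so a pair $\{s,t\}\subseteq S$ spans an edge of $L$ iff $s$ and $t$ commute. If $T=\{s_1,\dots,s_n\}\subseteq S$ is a pairwise-commuting subset, then by the standard theory of Coxeter subgroups $\langle T\rangle\cong(\mathbb{Z}/2)^n$ is finite, so $T$ is spherical and spans a simplex of $L$. Claim \ref{no4inL} further forces $n\le 3$, so $\dim L\le 2$.

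Next I match simplices of $L$ with cells of $\partial\Pi$ in a dual fashion. Vertices of $L$ are by definition the faces of $\Pi$; an edge $\{s,t\}\in L$ corresponds to a pair of commuting, hence right-angled, hence neighbouring faces of $\Pi$, and so to the unique edge of $\Pi$ they share; a $2$-simplex $\{s,t,u\}\in L$ corresponds to a spherical triple, whose finite subgroup $(\mathbb{Z}/2)^3$ fixes a point in $\HHH$, so by proposition \ref{vPi} the three faces share a vertex of $\Pi$. Conversely, any three faces through a common vertex of $\Pi$ pairwise neighbour and so span a $2$-simplex. Compactness combined with right-angledness further forces each vertex of $\Pi$ to lie on exactly three faces: its link is a right-angled spherical polygon, which by the spherical excess formula must be a triangle. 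Hence the $k$-simplices of $L$ are in incidence-reversing bijection with the $(2-k)$-cells of $\partial\Pi$.

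Since $\Pi$ is a compact convex polyhedron in $\HHH$, $\partial\Pi$ is homeomorphic to $\SS$, and the dual CW-complex just described---with triangular $2$-cells, thanks to trivalence of the vertices of $\Pi$---is canonically homeomorphic to $\partial\Pi$ itself. Hence $|L|\cong\SS$ as a flag triangulation. The main technical subtlety I expect lies in this final step: verifying that the combinatorial dual does assemble into a genuine CW-complex homeomorphic to $\partial\Pi$. This comes down to a local check at every edge of $\Pi$ (two dual triangles glued along a dual edge) and every vertex (a dual triangle whose three vertices are the three incident faces, cyclically ordered).
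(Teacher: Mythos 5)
Your proof is correct, and its overall skeleton is the same as the paper's: identify $L$ with the complex dual to the boundary complex of $\Pi$ (vertices of $L$ $\leftrightarrow$ faces, edges $\leftrightarrow$ edges via proposition \ref{ePi}, $2$-simplices $\leftrightarrow$ vertices via proposition \ref{vPi} and claim \ref{no4inL}), use simplicity of $\Pi$ (trivalence of its vertices, forced by the right dihedral angles) to see that the dual cells are triangles, and conclude $|L|\cong\partial\Pi\cong\SS$. Where you genuinely diverge is the flag property. The paper argues geometrically: if $L$ were not flag there would be three pairwise-neighbouring faces of $\Pi$ with no common vertex, contradicting Andreev's theorem (\cite[thm.~6.10.2~(ii)]{Dav}) for a simple right-angled compact polyhedron. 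You argue algebraically: an edge of $L$ means $m(s,t)=2$, so a pairwise-adjacent vertex set $T$ consists of pairwise-commuting generators, and the standard parabolic subgroup $\langle T\rangle$ is then the Coxeter group of the restricted matrix, i.e.\ $(\mathbb{Z}/2)^{\#T}$, which is finite, so $T$ is spherical by definition of the nerve. This is the standard reason nerves of right-angled Coxeter systems are flag; it is more elementary in that it needs no input from hyperbolic geometry beyond what is already used for the duality, whereas the paper's route imports Andreev's theorem but stays entirely in the geometric picture of the polyhedron. Both are complete; your treatment of the remaining points (trivalence via the spherical angle-excess at a link, and the local check that the dual complex realises $\partial\Pi$) is at least as careful as the paper's, which simply asserts the duality gives a triangulation of $\SS$.
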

\begin{proof}
Due to claim \ref{no4inL}, there is no $3$-simplex (nor higher-dimensional ones) in $L$. Now, each pair of faces of $\Pi$ corresponds to a spherical pair of generators iff those faces neighbour (for the ``only if'' part, use proposition \ref{ePi} and the fact that reflections in two disjoint planes generate an infinite group). Similarly, each $3$ faces correspond to a spherical subset of $S$ iff they share a vertex---see the proof of claim \ref{no4inL}. Note that the degrees of vertices of $\Pi$ are all equal to $3$, because the only possibility for a polyhedral angle with right dihedral angles is a trihedral angle (we then call $\Pi$ simple). So, $L$ is a complex dual to the polygonal complex of faces of $\Pi$ (meaning that vertices of $L$ correspond to faces of $\Pi$, edges of $L$---to edges of $\Pi$ and triangles of $L$---to vertices of $\Pi$), hence, it is a triangulation of $\SS$. It remains to show that it is flag: if it were not, then we would have three faces pairwise neighbouring, but not sharing a vertex, whoch would contradict Andreev's theorem (see \cite[thm.~6.10.2 (ii)]{Dav}), as $\Pi$ is right-angled and simple.
\end{proof}
\begin{df}\label{grser}
Let $G$ be a group with finite generating set $S$. Then the \emd{growth series} of $G$ with respect to $S$ is the formal power series $W$ defined by
$$W(z)=\sum_{n=0}^\infty\#S(n)z^n = \sum_{g\in G}z^{l(g)},$$
where for $n\in\mathbb{N}$, $S(n)=B(n)\sm B(n-1)$ is the (graph-theoretic) sphere in the Cayley graph of $(G,S)$, centred at some fixed vertex, of radius $n$, and $l$ is the length function on $(G,S)$ (cf.~definitions \ref{dfgr} and \ref{lgth}).
\end{df}
Growth rate of a group with finite set of generators is exactly the reciprocal of radius of convergence of the growth series of the group. That and the above claim lead to the below theorem.
\begin{thm}[for the proof, see \protect{\cite[example 17.4.3.]{Dav}} with the exercise there]
\label{gr}
For $G$ a (Coxeter) reflection group of a right-angled compact $k$-hedron $\Pi$ in $\HHH$ with the standard generating set $S$, its growth rate
$$\gr(G,S)=\frac{k-4+\sqrt{(k-4)^2-4}}{2}.$$
\end{thm}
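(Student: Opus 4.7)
The plan is to invoke the classical formula of Serre for the growth series of a Coxeter system, exploiting the fact (from the preceding claim) that the nerve $L$ of $(G,S)$ is a flag triangulation of $\SS$. Since $\gr(G,S)$ coincides with the reciprocal of the radius of convergence of $W(z)$ by Cauchy--Hadamard, the task reduces to writing $W(z)$ as an explicit rational function and locating its smallest positive pole.

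Concretely, Serre's formula reads
$$\frac{1}{W(z)}=\sum_{T\in\mathcal{S}}\frac{(-1)^{|T|}}{W_{\langle T\rangle}(z^{-1})},$$
with the sum running over spherical subsets $T\subseteq S$, including $T=\emptyset$. In the right-angled setting each spherical parabolic $\langle T\rangle$ is just $(\mathbb{Z}/2)^{|T|}$ with growth series $(1+z)^{|T|}$, and the spherical subsets are exactly the simplices of $L$. The $f$-vector of $L$ is pinned down by $f_0=k$ together with the identities $f_0-f_1+f_2=\chi(\SS)=2$ and $3f_2=2f_1$, yielding $f_1=3k-6$ and $f_2=2k-4$ (equivalently, these count the edges and vertices of $\Pi$ and satisfy the same identities by Euler's formula and simplicity of $\Pi$).

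Plugging these into Serre's formula and clearing denominators by multiplying through by $(1+z)^3$ produces the polynomial identity
$$\frac{(1+z)^3}{W(z)}=1+(3-k)z+(k-3)z^2-z^3,$$
whose value at $z=1$ is $0$, so it factors as $(1-z)(z^2-(k-4)z+1)$, and therefore
$$W(z)=\frac{(1+z)^3}{(1-z)\bigl(z^2-(k-4)z+1\bigr)}.$$
For any $k\ge 7$ (which holds in our setting, as any compact right-angled polyhedron in $\HHH$ has at least $12$ faces) the quadratic has discriminant $(k-4)^2-4>0$ and two positive real roots whose product is $1$; the smaller root $z_{-}=\tfrac{1}{2}\bigl(k-4-\sqrt{(k-4)^2-4}\bigr)$ is strictly less than $1$, so it is the smallest positive pole of $W$ (dominating the pole at $z=1$), and hence $\gr(G,S)=1/z_{-}=\tfrac{1}{2}\bigl(k-4+\sqrt{(k-4)^2-4}\bigr)$. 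The principal delicate point is the bookkeeping: tracking sign conventions in Serre's formula (the infinite dihedral group provides a useful sanity check) and verifying that $z_{-}<1$ so that it indeed dominates the pole at $z=1$.
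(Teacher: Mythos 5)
Your proof is correct and is essentially the argument the paper intends: Theorem~\ref{gr} itself is proved by citation to Davis's Example~17.4.3, but the identical computation --- Steinberg's formula with $f_0=k$, $f_1=3(k-2)$, $f_2=2(k-2)$ coming from the flag triangulation of $\SS$, yielding $1/W(t)=\frac{(1-t)\left(t^2-(k-4)t+1\right)}{(1+t)^3}$ and hence the smallest positive pole $\frac{k-4-\sqrt{(k-4)^2-4}}{2}$ --- appears verbatim in Appendix~\ref{appxgr} as \eqref{1/Wrbsum}--\eqref{1/Wrbprod}. The only cosmetic differences are the attribution (Serre vs.\ Steinberg) and that you make explicit the check $z_-<1$, which the paper leaves implicit.
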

\subsection{The proof}
I recall the theorem to be proved:
\begin{thm*}[recalled theorem \ref{3phbasic}]
If $\Pi$ is a compact right-angled polyhedron in $\HHH$ with $k\ge 18$ faces, then for $\CG$ the Cayley graph of reflection group of $\Pi$ with the standard generating set, we have
$$\pc(\CG)<\pu(\CG)$$
for bond and site Bernoulli percolation on $\CG$.
\end{thm*}
%\begin{thm*}[recalled theorem \ref{3phbasic}]
%If $\Pi$ is a compact right-angled polyhedron in $\HHH$ with $k\ge 18$ faces, then in the setting of hypothesis \ref{PiG}, $\rt(\CG)<\gr(\CG)$, hence (see rem.~\ref{g<=gs<=rt})
%$$\pc(\CG)<\pu(\CG).$$
%\end{thm*}
\begin{proof}
First, note that, due to remark \ref{g<=gs<=rt}, it is sufficient to show that $\rt(\CG)<\gr(\CG)$ to prove the theorem.
\par Let us put $b_1(k):=2\sqrt{3(k-3)}$ (the upper bound for $\rt(\CG)$) and $b_2(k):=\frac{1}{2}(k-4+\sqrt{(k-4)^2-4})$ (the formula for $\gr(\CG)$). It is sufficient to prove that for real $k\ge 18$,
$$b_1(k)<b_2(k).$$
That will follow once shown for $k=18$, provided that inequality
\begin{equation*}
\dk b_1(k) \le \dk b_2(k)
\end{equation*}
is shown for $k\ge 18$.
\par For $k=18$, we have
$$b_1(k)=2\sqrt{45}<7+\sqrt{48}=b_2(k)$$
and for $k\ge 18$, we differentiate:
\begin{equation*}
\dk b_1(k)=\sqrt{\frac{3}{k-3}}\le 1
\end{equation*}
and
\begin{equation*}
\dk b_2(k) = \frac{1}{2} + \frac{1}{2}\frac{k-4}{\sqrt{(k-4)^2-4}}\ge 1 \ge \dk b_1(k), \label{gr'>=1},
\end{equation*}
which finishes the proof.
\end{proof}
\section{The general case}\label{secgen}
Recall hypothesis \ref{PiG}.
\begin{thm*}[recalled theorem \ref{3phgen}]
If $k\ge13$, then for $\CG$ the Cayley graph of a reflection group of a Coxeter $k$-hedron in $\HHH$ with the standard generating set, we have
$$\pc(\CG)<\pu(\CG)$$
for bond and site Bernoulli percolation on $\CG$.
\end{thm*}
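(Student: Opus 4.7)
The plan is to compare $\gs(\CG)$ with $\gr(\CG)$, since by remark \ref{g<=gs<=rt} combined with theorems \ref{bdpc} and \ref{bdpu}, any inequality $\gs(\CG) < \gr(\CG)$ implies $\pc(\CG) < \pu(\CG)$. The task is therefore to produce an upper bound on $\gs(\CG)$ and a lower bound on $\gr(\CG)$ that can be compared for all $k \geq 13$, under the hypothesis that $\Pi$ is just a Coxeter $k$-hedron (not necessarily compact, not necessarily right-angled).

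For the \emph{lower bound on $\gr(\CG)$}, I cannot use the closed formula from theorem \ref{gr}, which was restricted to the compact right-angled case. Instead I would invoke theorem \ref{gr>=}, the lower bound advertised in the introduction for growth rates of Cayley graphs of reflection groups of $k$-hedra with $k \geq 6$. This provides a valid lower bound in the full generality of hypothesis \ref{PiG}, at the price of being weaker than the explicit formula available in the right-angled case.

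For the \emph{upper bound on $\gs(\CG)$}, bounding $\gs$ by $\rt$ and then applying the Gabber-lemma estimate as in lemma \ref{rhobasic} loses too much. Instead I would apply theorem \ref{thmgs}, which bounds the exponential growth of the number of simple cycles through a basepoint by a function of the spectral radius $\varrho(\CG)$. This reduces the task to bounding $\varrho(\CG)$, and I would do so by reapplying corollary \ref{Gabcor} to the same family of functions $F(e) = c_{r(e_+)}^{\pm 1}$ used in the basic approach, but with a fresh optimization of the parameters $(c_1,c_2,c_3)$ that accounts for the constraint $r(v) \le 3$ from proposition \ref{r<=3} and for the restrictions on the values $q_i(v)$ imposed by claim \ref{no4inL} (no spherical $4$-subset of $S$). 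The sharper bound should be a lemma of the form $\rt(\CG) \le b_1(k)$ (the promised lemma \ref{rhogen}) which, once fed into theorem \ref{thmgs} through the identity $\varrho(\CG) = \rt(\CG)/k$, yields a bound $\gs(\CG) \le \tilde b_1(k)$ strictly better than the $2\sqrt{3(k-3)}$ of lemma \ref{rhobasic}.

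The final step is to verify the inequality $\tilde b_1(k) < b_2(k)$ for all $k \geq 13$, where $b_2(k)$ is the lower bound from theorem \ref{gr>=}. Following the pattern of the proof of theorem \ref{3phbasic}, I would check the inequality at the endpoint $k=13$ by direct computation and then show that $\frac{\ud}{\ud k}\tilde b_1(k) \le \frac{\ud}{\ud k} b_2(k)$ for $k \geq 13$. The main obstacle is precisely this comparison: both $\tilde b_1$ and $b_2$ are weaker than their right-angled compact analogues, so the gap between them is narrower and the bookkeeping in the Gabber estimate must extract every drop from the geometric constraints (bounded $r(v)$, flag structure of the nerve, absence of $3$-simplices in the nerve) to push the threshold down from $k = 18$ to $k = 13$.
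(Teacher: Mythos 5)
Your proposal follows essentially the same route as the paper: reduce to $\gs(\CG)<\gr(\CG)$ via remark \ref{g<=gs<=rt}, take the lower bound on $\gr$ from theorem \ref{gr>=}, bound $\rt(\CG)$ by a refined Gabber-lemma estimate (the paper's lemma \ref{rhogen} gives $\rt(\CG)\le\frac{k+17}{3}$ using the bound on $q_3(v)$ from lemma \ref{q_3gen}), convert to a bound on $\gs$ via theorem \ref{thmgs}, and finish by checking the inequality at $k=13$ together with the derivative comparison. The only cosmetic difference is that theorem \ref{thmgs} is stated directly in terms of $\rt$, so no detour through $\varrho(\CG)=\rt(\CG)/k$ is needed.
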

%\begin{thm*}[recalled theorem \ref{3phgen}]
%If $k\ge13$, then for $\CG$ the Cayley graph of $(G,S)$ such as in hypothesis \ref{PiG}, the inequality $\gs(\CG)<\gr(\CG)$ holds, hence also (see rem.~\ref{g<=gs<=rt})
%$$\pc(\CG)<\pu(\CG)$$
%for bond and site Bernoulli percolation on $\CG$.
%\end{thm*}
The tools which improve the result in theorem \ref{3phbasic} to the above one (and also to theorem \ref{3phRACpt}), are the upper bound for $\gs(\CG)$, along with a more appropriate upper bound for $\rt(\CG)$, and a lower bound for $\gr(\CG)$, stated below. (Along with the upper bound for $\gs(\CG)$, I establish the equality in the second part of the below theorem, but I do not use it in this \work.)
\begin{thm}\label{thmgs}
Let $\CG$ be an arbitrary regular graph of degree $k\ge 2$ (not necessarily simple) with distinguished vertex $o$ and let $\rt=\rt(\CG,o)$, $\gs=\gs(\CG,o)$. Then
\begin{equation}
\gs \le \frac{\rt+\sqrt{\rt^2-4(k-1)}}{2}.
\end{equation}
If, in addition, $\rt(\CG)>2\sqrt{k-1}$ (e.g.~when $\CG$ is vertex-transitive and simple %(? -- mo\.ze mo\.zna dopu\'sci\'c wielokrotne kraw\k{e}dzie?)
and is not a tree), then the estimate becomes an identity:
\begin{equation}
\gs = \frac{\rt+\sqrt{\rt^2-4(k-1)}}{2}.
\end{equation}
\end{thm}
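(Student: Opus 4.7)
The plan is to reduce the bound on $\gs$ to a single generating-function identity obtained by lifting walks in $\CG$ to the $k$-regular tree $T_k$, the universal cover of $\CG$ (which exists even when $\CG$ has loops or multi-edges, since $k$-regularity is a local condition). Let $A^*(y)=\sum_{n\ge 0}a^*_n(\CG,o)y^n$ and $C(y)=\sum_{n\ge 0}C_n(\CG,o)y^n$, whose radii of convergence are $1/\gs$ and $1/\rt$. The identity I aim to establish is
\begin{equation}\label{planid}
A^*(y)\;=\;\frac{1-y^2}{1+(k-1)y^2}\;C\!\left(\frac{y}{1+(k-1)y^2}\right),
\end{equation}
a power-series form of the classical Ihara--Hashimoto correspondence between closed walks and non-backtracking closed walks on regular graphs.

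To derive \eqref{planid}, I fix a lift $\tilde o$ of $o$ in $T_k$. Walks in $\CG$ starting at $o$ lift uniquely to walks in $T_k$ starting at $\tilde o$, and the backtrack structure is preserved by the lift; consequently, closed walks at $o$ correspond to walks from $\tilde o$ into the fibre $p^{-1}(o)$, while non-backtracking closed walks correspond to non-backtracking walks in $T_k$, which on a tree are exactly geodesics. Hence $a^*_m=\#\{\tilde o'\in p^{-1}(o):d_{T_k}(\tilde o,\tilde o')=m\}$. On $T_k$ the standard first-passage computation gives $W_{T_k}(\tilde o,\tilde o';y)=F(y)^{d_{T_k}(\tilde o,\tilde o')}\,U(y)$, where $F=F(y)$ is the first-visit generating function from a neighbour of any vertex, satisfying the quadratic $F=y+(k-1)yF^2$, and $U(y)=(1-kyF(y))^{-1}$ is the return generating function. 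Summing over the fibre and grouping by tree-distance yields $C(y)=U(y)\cdot A^*(F(y))$; the substitution $y'=F(y)$, which inverts to $y=y'/(1+(k-1)y'^2)$ and $U(y)=(1+(k-1)y'^2)/(1-y'^2)$, rewrites this as \eqref{planid}.

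With \eqref{planid} in hand, both parts of the theorem fall out of elementary complex analysis. First, $\rt\ge 2\sqrt{k-1}$ always, by comparison with the covering tree. Setting $R_C=1/\rt\le 1/(2\sqrt{k-1})$, the map $w(y):=y/(1+(k-1)y^2)$ is increasing on $[0,1/\sqrt{k-1}]$ with maximal value $1/(2\sqrt{k-1})$, so there is a unique $y^*\in(0,1/\sqrt{k-1}]$ with $w(y^*)=R_C$, and solving the resulting quadratic gives $1/y^*=(\rt+\sqrt{\rt^2-4(k-1)})/2$. Non-negativity of the coefficients of $A^*$ and $C$ implies $A^*(y)<\infty$ whenever $w(y)<R_C$, i.e., for $y\in(0,y^*)$; hence $\gs\le 1/y^*$, the claimed inequality. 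If in addition $\rt>2\sqrt{k-1}$, then $y^*<1/\sqrt{k-1}$, so $w'(y^*)\neq 0$, while the prefactor $(1-y^2)/(1+(k-1)y^2)$ is regular and nonzero at $y^*$ (as $y^*<1$). Since Pringsheim's theorem guarantees a genuine singular point of $C$ at $w=R_C$, that singularity pulls back to a singular point of $A^*$ at $y^*$ under the non-degenerate change of variable, forcing $R_A=y^*$ and thus the stated equality. The main obstacle I expect is verifying \eqref{planid} cleanly in the non-simple regime, where the notions of ``reverse of a loop'' and ``lift of a walk'' need an explicit formalism; once \eqref{planid} is written down, the asymptotic extraction is routine.
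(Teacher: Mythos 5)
Your proof is correct and follows essentially the same route as the paper's: both lift closed walks to the universal cover $T_k$, identify non-backtracking cycles with geodesics into the fibre over $o$, and transfer the radius of convergence of $C$ to that of $A^*$ through the substitution $y\mapsto y/(1+(k-1)y^2)$ (the paper phrases this via Woess's Green-function formula $G(d,kz)=A(z)f(z)^d$, which is exactly your $U\cdot F^d$ decomposition). The only minor difference is in the equality case, where you invoke Pringsheim's theorem and local invertibility of the change of variable, while the paper argues directly that $f(1/\rt)=\min(1/\gs,1/\sqrt{k-1})$ and that the hypothesis $\rt>2\sqrt{k-1}$ forces the minimum to be attained at $1/\gs$; both arguments are valid.
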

The proof of the above theorem is delayed to appendix \ref{appxgs}.
\begin{thm}\label{gr>=}
For $\CG$ the Cayley graph of a reflection group of a Coxeter $k$-hedron in $\HHH$ with the standard generating set,\footnote{As in hypothesis \ref{PiG}.} if we assume that $k\ge 6$, then the growth rate of $\CG$
$$\gr(\CG)\ge\frac{k-4+\sqrt{(k-4)^2-4}}{2}.$$
\end{thm}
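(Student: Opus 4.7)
My plan is to reduce the general case to the known right-angled one (theorem \ref{gr}) by comparing $G$ with an auxiliary right-angled Coxeter group on the same generating set. Define $(G^{\mathrm{ra}}, S)$ by the Coxeter matrix $m^{\mathrm{ra}}(s,t) = 2$ for neighbouring face pairs and $m^{\mathrm{ra}}(s,t) = \infty$ otherwise. The crucial preliminary observation is that the nerve of $(G, S)$, as an abstract simplicial complex, is a flag triangulation of $\SS$ with exactly $k$ vertices, $3k-6$ edges and $2k-4$ triangles—combinatorially identical to the nerve appearing in the compact right-angled case. Indeed, proposition \ref{ePi} identifies the 1-skeleton of the nerve with the face-adjacency graph of $\Pi$, proposition \ref{vPi} identifies 2-simplices with vertices of $\Pi$, and the non-obtuse angle condition forces every vertex of $\Pi$ to be trihedral (as in the proof of proposition \ref{v:r>=3}); Euler's formula for $\Pi$ (with $F=k$ and $3V = 2E$) then gives $V(\Pi) = 2k-4$ and $E(\Pi) = 3k-6$. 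Consequently $(G^{\mathrm{ra}}, S)$ has the same nerve as $(G, S)$, and the formula underlying theorem \ref{gr} (coming from example 17.4.3 in \cite{Dav}, which depends only on the nerve combinatorics) yields
\[
\gr(G^{\mathrm{ra}}, S) = \frac{k-4+\sqrt{(k-4)^2-4}}{2}.
\]

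The remaining step—and the heart of the argument—is to prove $\gr(G, S) \geq \gr(G^{\mathrm{ra}}, S)$. The guiding intuition is that shrinking any dihedral angle of $\Pi$ from $\pi/2$ to $\pi/m$ with $m \geq 3$ only enlarges the corresponding finite dihedral parabolic subgroup from $\mathbb{Z}_2^2$ (order $4$) to $D_m$ (order $2m$), and similarly enlarges each vertex parabolic subgroup from $\mathbb{Z}_2^3$ to a larger spherical triangle group; hence the Cayley graph of $G$ should admit at least as many elements of each word-length as that of $G^{\mathrm{ra}}$. I would make this rigorous through Steinberg's identity for the growth series of a Coxeter system,
\[
\frac{1}{W_G(t^{-1})} = \sum_{T \in \mathcal{S}(G,S)} \frac{(-1)^{|T|}}{W_T(t)},
\]
combined with the coefficient-wise parabolic inequality $W_T^{(G)}(t) \geq W_T^{(G^{\mathrm{ra}})}(t)$ for every spherical $T\subseteq S$: for $|T| = 2$ this reads $(1+t)(1 + t + \cdots + t^{m-1}) \geq (1+t)^2$, and for $|T| = 3$ it follows from writing each spherical triangle group as a product of cyclotomic-type factors whose degrees are all $\geq 2$. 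Comparing the smallest positive zero of $W_G^{-1}(t^{-1})$ with that of $W_{G^{\mathrm{ra}}}^{-1}(t^{-1})$ on the interval $(0, 1)$ should then give the desired inequality of radii of convergence.

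The main obstacle I foresee is exactly this last comparison: the alternating signs in Steinberg's formula prevent the individual parabolic inequalities from propagating trivially to an inequality between $W_G$ and $W_{G^{\mathrm{ra}}}$. I expect to need either a delicate sign-tracking argument near the radius of convergence of $W_{G^{\mathrm{ra}}}$, or, alternatively, a more direct construction of a length-preserving injection from the metric sphere of $G^{\mathrm{ra}}$ into that of $G$ using a suitable canonical (e.g.\ shortlex) choice of reduced expressions, from which the coefficient-wise inequality of sphere-sizes, and hence $\gr(G, S) \geq \gr(G^{\mathrm{ra}}, S)$, would follow immediately.
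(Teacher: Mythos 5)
There is a genuine gap, in two places. First, your opening reduction assumes that the nerve of $(G,S)$ is a (flag) triangulation of $\SS$ with $k$ vertices, $3k-6$ edges and $2k-4$ triangles. Under hypothesis \ref{PiG} this is false in general: $\Pi$ is only assumed to be a finite-sided Coxeter polyhedron, not a compact one. A triple of faces meeting at an ideal vertex generates a Euclidean triangle group, which is infinite, so that triple is \emph{not} spherical and contributes no $2$-simplex to the nerve; more degenerate nerves (down to a discrete set of vertices) also occur. Consequently your Euler count $V(\Pi)=2k-4$, $E(\Pi)=3k-6$ and the identification of the nerve of $(G,S)$ with that of $G^{\mathrm{ra}}$ both fail, and the whole reduction to theorem \ref{gr} collapses outside the compact case. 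The paper only uses the much weaker fact that the nerve embeds in $\SS$ (so that Euler's \emph{inequality} $f_1\le f_0+f_2-2$ holds for the embedded $1$-skeleton and its faces), and it must treat the degenerate nerves as separate cases via an embedded $(k-3)$-regular tree.

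Second, the step you yourself flag as the main obstacle --- passing from the parabolic inequalities $W_T^{(G)}\ge W_T^{(G^{\mathrm{ra}})}$ through the alternating signs of Steinberg's formula to a comparison of the smallest positive zeros --- is precisely where all the work lies, and your proposal does not supply it. The paper proves the pointwise inequality $1/W(t)\le 1/W\rb(t)$ on $(0;1]$ by a face-by-face comparison over the faces of the embedded $1$-skeleton (claims \ref{clWsum} and \ref{clsumWrb}), which requires the lower bound $[2,m](t^{-1})\ge\frac{m}{2}(t^{-1}+1)^2$, the angle-sum condition at non-spherical triangles, the palindromicity $W_f(t)=t^{m(f)}W_f(t^{-1})$, an explicit check against the four families of spherical triangle groups, and a separate Euler/side-counting argument; none of this is a routine sign-tracking. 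Your alternative suggestion --- a length-preserving injection of spheres of $G^{\mathrm{ra}}$ into spheres of $G$ via shortlex normal forms --- would establish a coefficient-wise domination of growth series that is much stronger than the theorem and is not known to follow from a canonical choice of reduced words; as stated it is a conjecture, not a proof. So the proposal correctly identifies the right-angled comparison group and Steinberg's formula as the relevant tools, but proves the theorem neither in the stated generality nor, even for compact $\Pi$, in its decisive analytic step.
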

The proof is presented in appendix \ref{appxgr}.
\begin{lem}\label{rhogen}
For $k$ and $\CG$ as in hypothesis \ref{PiG}, we have%chyba zawsze - ?
$$\rt(\CG)\le\frac{k+17}{3}.$$
\end{lem}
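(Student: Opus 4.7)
My plan is to apply Gabber's lemma in the form of Corollary~\ref{Gabcor} with the edge-weight function $F(e)=c_{r(e_+)}$ on $O(\CG)$ introduced in Section~\ref{geomPiG}, so that the problem reduces to choosing $c_1,c_2,c_3>0$ for which
\begin{equation*}
f_v(\bar c)=r(v)c_{r(v)}+\sum_{i=1}^{3}\frac{q_i(v)}{c_i}\le\frac{k+17}{3}
\end{equation*}
uniformly over $v\in V(\CG)$.

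The naive uniform choice $c_1=c_2=c_3=\sqrt{(k-3)/3}$ of Lemma~\ref{rhobasic} produces the square-root bound $2\sqrt{3(k-3)}$, which already exceeds $(k+17)/3$ for moderate $k$, so the improvement has to come from tuning the three parameters asymmetrically. My intended choice takes $c_1,c_2$ close to $3$: using only $r(v)\le 3$ from Proposition~\ref{r<=3} and the identity $\sum_{i=1}^{3}q_i(v)=k-r(v)$, one checks that the cases $r(v)\in\{0,1,2\}$ give $f_v\le(k+16)/3$ already (e.g.\ $r(v)=2$ yields $6+(k-2)/3=(k+16)/3$, and the vertex $o$ gives $f_o=k/c_1\le k/3$). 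The single obstructive case is $r(v)=3$.

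For $r(v)=3$, after substituting $q_1(v)+q_2(v)+q_3(v)=k-3$ the expression becomes
\begin{equation*}
f_v=3c_3+\frac{k-3}{3}+q_3(v)\Bigl(\frac{1}{c_3}-\frac{1}{3}\Bigr),
\end{equation*}
and with $c_3<3$ the coefficient of $q_3(v)$ is positive, so the argument stands or falls on an upper bound for $q_3(v)$. The key geometric step I plan to prove is that $q_3(v)$ is bounded by an \emph{absolute} constant $N$ independent of $k$: by Proposition~\ref{v:r>=3} each out-neighbour $u$ of $v$ with $r(u)=3$ forces three in-faces of $u\Pi$ to meet at a vertex $p$ of the shared face of $v\Pi$ and $u\Pi$, hence $p$ is a vertex of $v\Pi$; using Proposition~\ref{ePi}, Proposition~\ref{vPi} and the admissible local Coxeter corner patterns described in the proof of Claim~\ref{no4inL}, the number of such $u$ that can hang off a single vertex $p$ of $v\Pi$, and the number of such $p$, are each controlled by boundedly many combinatorial types.

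Once such an $N$ is in hand, setting $c_3=\sqrt{N/3}$ turns the $r(v)=3$ bound into $\frac{k-3-N}{3}+2\sqrt{3N}$, and the desired inequality $\le(k+17)/3$ reduces to the elementary condition $6\sqrt{3N}\le 20+N$, i.e.\ $N^2-68N+400\ge 0$, which holds for all $N$ at most a small absolute threshold (around $N\le 6$). The hard part, and where I expect the proof's technical weight to lie, is precisely the geometric bound $q_3(v)\le N$ for a small $N$; the subsequent optimization of the $c_i$ is a one-variable calculus exercise, and the $r(v)\in\{0,1,2\}$ analysis is immediate from the degree identity.
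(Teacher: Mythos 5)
Your overall strategy---Gabber's lemma via Corollary~\ref{Gabcor}, keeping $c_1=c_2=3$ and lowering $c_3$, with the real work being a geometric bound on $q_3(v)$---is the paper's strategy, and your sketch of how to bound $q_3(v)$ (reflecting the in-edges of $e_+$ back to $v\Pi$ and using Propositions~\ref{v:r>=3}, \ref{ePi}, \ref{vPi} to pin down finitely many candidate vertices) is essentially the paper's Claim~\ref{eN'} and Lemma~\ref{q_3gen}. But there is a genuine gap in how you assemble the estimate. Your claim that the cases $r(v)\in\{0,1,2\}$ are ``immediate from the degree identity'' is only true when $c_3=3$: the quantity $f_v(\bar c)$ contains the term $q_3(v)/c_3$ for \emph{every} vertex $v$, and a priori $q_3(v)$ can be as large as $k-r(v)$. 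Once you shrink $c_3$ to handle $r(v)=3$, the computation $6+(k-2)/3$ for $r(v)=2$ is no longer valid, and without a bound on $q_3(v)$ in that case you get $f_v\le 6+(k-2)/c_3$, which is far above $(k+17)/3$. So you must prove a $q_3$ bound for $r(v)=1$ and $r(v)=2$ as well; the paper shows $q_3(v)\le 0,2,3$ according as $r(v)=1,2,3$.

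Even after importing those bounds, your choice $c_3=\sqrt{N/3}$ is too aggressive. With $q_3(v)\le 2$ in the case $r(v)=2$, one has $f_v\le 6+(k-4)/3+2/c_3$, and requiring this to be at most $(k+17)/3$ forces $c_3\ge 2$; your $c_3=\sqrt{N/3}\le\sqrt{2}$ (for the values $N\le 6$ your quadratic condition allows) violates this. The paper takes $c_3=2$, for which $r(v)=2$ gives exactly $(k+17)/3$ (this, not $r(v)=3$, is the binding case) and $r(v)=3$ gives $6+(k-6)/3+3/2=(k+16\tfrac12)/3$. So the optimization must be run against all three constraints simultaneously, and the single-case calculus exercise you describe lands on an inadmissible value of $c_3$. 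Finally, the geometric bound itself is left at the level of ``boundedly many combinatorial types''; the concrete constants $0,2,3$ matter for the arithmetic, so this step cannot be waved through.
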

\begin{proof}
In order to bound the sum \eqref{f(r,q)2} (and to use the corollary \ref{Gabcor}), I put $c_1=c_2=3,c_3=2$ and I am going to bound $q_3(v)$.
\begin{lem}\label{q_3gen}
For $o\neq v\in V(\CG)$, we have
$$q_3(v)\le
\{\begin{array}{r@{\text{, if }}l}
0& r(v)=1\\
2& r(v)=2\\
3& r(v)=3
\end{array}\right.$$
\end{lem}
\begin{denot}
For $e\in\oE(\CG)$, let $|e|\in E(\CG)$ denote $e$ as unoriented edge and let us denote by $H(e)$ the half-space bounded by the plane of $e\d$, containing $e_-$.
\par For $e\in E(\CG)$ (or $e\in \oE(\CG)$), let $R_e$ be the reflection in the plane of $e\d$. Whenever I write $R_e(e')$ for some $e'=(v,w)\in\oE(\CG)$, I mean the oriented edge $(R_e(v),R_e(w))$.
\end{denot}
\begin{proof}
Let $e\in O(\CG)$ be an arbitrary edge passing from $v$ and $N$ be the set of unoriented edges passing to $v$ (when oriented according to $O(\CG)$) traversing faces of $\Pi$ neighbouring $e\d$:
$$N=\{|f|:f\in O(\CG), f_+=v, f\d\sim e\d\}.$$
Call the vertex $e_+$ by $v'$ and let $N'=R_e(N)$ and note that $v'\Pi=R_e(v\Pi)$.
\begin{clm}\label{eN'}
All the edges passing to $v'$ belong (without orientations) to $\{|e|\}\cup N'$.
\end{clm}
\begin{proof}
The proof is by contraposition: for arbitrary $e'\in \oE(\CG)$ passing from $v'$ such that $|e'|\notin\{|e|\}\cup N'$, I show that $e'\in O(\CG)$. So, let $e'\neq \bar e$ be arbitrary edge from $\oE(\CG)$ passing from $v'$ (this orientation is assumed just for convenience), such that $|e'|\notin\{|e|\}\cup N'$. Then,
\begin{itemize}
\item if $e'\d$ neighbours $e\d$, then $R_e(e')\d\sim e\d$ as well and $|R_e(e')|\notin N$, so $|R_e(e')|$ passes from $v$, i.e.~$R_e(e')\in O(\CG)$, so
\begin{equation*}
H(e')\supseteq H(R_e(e'))\cap H(e)\ni o,
\end{equation*}
because $v'\Pi$ has only non-obtuse dihedral angles, so $e'\in O(\CG)$;
\item if $e'\d$ does not neighbour $e\d$, then from proposition \ref{ePi} the planes containing them are disjoint, so $H(e')\supseteq H(e)\ni o$ and $e'\in O(\CG)$.
\end{itemize}
That shows that the edges passing to $v'$ all belong to $\{|e|\}\cup N'$ (without orientations).
\end{proof}
%\begin{rem}
%The above claim and its proof apply, as well, to the more general case of reflection groups in $\Hn$, $\mathbb{R}^n$ and ($\mathbb{S}^n$???) for $n\in\mathbb{N}$.
%\end{rem}
%From the claim we have $r(v')\le 1+\#N$.
Now, assume that $r(v')=3$. By the claim, two of the edges passing to $v'$ are in $N'$ and the third is $e$ and by proposition \ref{r<=3} the corresponding faces of $v'\Pi$ share a common vertex. So do their reflections in $R_e$, which are $e\d$ and two of the faces corresponding to edges passing to $v$.
\par That means that:
\begin{itemize}
\item if $r(v)=1$, then there is no $e$ as above (i.e.~with $r(e_+)=3$) and $q_3(v)=0$;
\item if $r(v)=2$, then the two faces corresponding to edges passing to $v$ have to lie on intersecting planes, hence share one edge (from prop.~\ref{ePi}) and there are at most two possibilities for such a common vertex with $e\d$ as above (and hence for $e$), so $q_3(v)\le 2$;
\item if $r(v)=3$, then the faces corresponding to edges passing to $v$ share a common vertex and at most three others pairwise---the latter are the only possibilities for a common vertex with $e\d$ as above, so $q_3(v)\le 3$.
\end{itemize}
That finishes the proof of lemma \ref{q_3gen}
\end{proof}
\begin{rem}
Consider vertex $o$. Each neighbour $v$ of $o$ has $r(v)=1$ (because going closer to $o$ from $v$ we must return to $o$). Hence, $q_1(o)=k$, $q_2(o)=q_3(o)=0$.
\end{rem}
Now, for $o\neq v\in V(\CG)$, because $q_1(v)+q_2(v)=k-r(v)-q_3(v)$, we have
\begin{align*}
\textrm{if $r(v)=1$, then} &\quad f_v(\bar c)=3 + \frac{q_1(v)+q_2(v)}{3} + \frac{0}{2}=\frac{k+8}{3},\\%\label{rhogenlinestim}
\textrm{if $r(v)=2$, then} &\quad f_v(\bar c)=2\cdot 3 + \frac{q_1(v)+q_2(v)}{3} + \frac{q_3(v)}{2}\le 6+\frac{k-4}{3}+\frac{2}{2}=\frac{k+17}{3},\\
\textrm{if $r(v)=3$, then} &\quad f_v(\bar c)=3\cdot 2 + \frac{q_1(v)+q_2(v)}{3} + \frac{q_3(v)}{2}\le 6+\frac{k-6}{3}+\frac{3}{2}=\frac{k+16\frac{1}{2}}{3},
\end{align*}
and for $v=o$, $f_o(\bar c)=\frac{k}{3}$, so we put $C_f=\frac{k+17}{3}$ in corollary \ref{Gabcor} and obtain the lemma \ref{rhogen}.
\end{proof}
%\subsection{The proof(?)}
\begin{proof}[Proof of theorem \ref{3phgen}]
First of all, note that, due to remark \ref{g<=gs<=rt}, it is sufficient to show that $\gs(\CG)<\gr(\CG)$ in order to prove the theorem.
\par The calculations are analogous to those in proof of theorem \ref{3phbasic}. Theorems \ref{thmgs}, \ref{gr>=} and lemma \ref{rhogen} give us the following bounds, which I denote by $b_1(k)$, $b_2(k)$, respectively:
\begin{align*}
\gs(\CG) \le \frac{\frac{k+17}{3}+\sqrt{(\frac{k+17}{3})^2-4(k-1)}}{2} &= b_1(k)\\
\gr \ge \frac{k-4+\sqrt{(k-4)^2-4}}{2} &= b_2(k),
\end{align*}
hence it is sufficient to prove the inequality $b_1(k)<b_2(k)$ for $k\ge 13$.
Let us check it for $k=13$:
\begin{equation*}
\rt\le 10,\quad\textrm{so}\quad
b_1(k) = 5+\sqrt{13} < \frac{9+\sqrt{77}}{2} = b_2(k).
\end{equation*}
and, again, check the inequality
$$\dk b_1(k) \le \dk b_2(k)$$
for real $k\ge 13$. The right-hand side derivative is $\ge1$ by \eqref{gr'>=1} and the left-hand side:
\begin{equation*}
\dk\left(\frac{k+17+\sqrt{k^2-2k+325}}{6}\right) = \frac{1}{6}\left(1+\frac{2k-1}{2\sqrt{k^2-2k+325}}\right) \le \frac{1}{6}\left(1+\frac{2k-1}{2k-2}\right) < 1,
\end{equation*}
which finishes the proof.
\end{proof}
\section{The compact right-angled case}\label{secRACpt}
In this section I assume that $\Pi$ is right-angled and compact, hence $G$ is a right-angled Coxeter group acting cocompactly on $\HHH$.
\begin{exmp}\label{dodecah}
One of the simplest examples of such $\Pi$ is the right-angled regular dodecahedron. Its orbit under $G$ is a regular tiling of $\HHH$.
\end{exmp}
\begin{thm*}[recalled theorem \ref{3phRACpt}]
If $\Pi$ is a compact right-angled polyhedron in $\HHH$, then for $\CG$ the Cayley graph of reflection group of $\Pi$ with the standard generating set, we have
$$\pc(\CG)<\pu(\CG)$$
for bond and site Bernoulli percolation on $\CG$.
\end{thm*}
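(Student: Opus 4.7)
The plan is to mirror the strategy of Theorem \ref{3phgen}. By Remark \ref{g<=gs<=rt} it is enough to show $\gs(\CG) < \gr(\CG)$, and for a compact right-angled $\Pi$ Theorem \ref{gr} gives the exact value $\gr(\CG) = y_k := (k-4+\sqrt{(k-4)^2-4})/2$. Via Theorem \ref{thmgs}, after squaring, the inequality $\gs(\CG) < y_k$ reduces to exhibiting an upper bound $R_k$ on $\rt(\CG)$ with
$$R_k < y_k + \frac{k-1}{y_k}.$$
Classical counting (every face a pentagon or larger, every vertex trihedral) forces $k \ge 12$, with equality realised uniquely by the regular dodecahedron of Example \ref{dodecah}.

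For $k \ge 13$ the inequality $R_k < y_k + (k-1)/y_k$ is already verified with $R_k = (k+17)/3$ in the proof of Theorem \ref{3phgen}, since the compact right-angled case is a special case of the general Coxeter setting treated there. So the only genuinely new case is $k = 12$. Here the threshold equals $y_{12} + 11/y_{12} = 48 - 10\sqrt{15} \approx 9.27$, whereas Lemma \ref{rhogen} only yields $29/3 \approx 9.67$; a sharper upper bound on $\rt(\CG)$ is therefore required.

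To obtain the sharpening, I would revisit Lemma \ref{q_3gen} and the case analysis of $f_v(\bar c)$ for $r(v) \in \{1,2,3\}$, exploiting three features of the right-angled compact setting: (i) each vertex of $\Pi$ is trihedral with pairwise perpendicular edges, so every polyhedral corner is completely determined combinatorially and metrically; (ii) the reflection $R_e$ used to transport the picture from $v$ to $v' = e_+$ is by a plane orthogonal to every face of $v\Pi$ meeting $e\d$, so no dihedral angle is distorted by $R_e$; (iii) every face of $\Pi$ has at least five neighbours. These constraints should reduce both $q_2(v)$ and $q_3(v)$ relative to the general-case bounds, after which I would reoptimise the parameters $(c_1,c_2,c_3)$ in Gabber's function $F$ from Corollary \ref{Gabcor}, tailoring them to the smaller values of the $q_i$ available in the right-angled regime.

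The main obstacle is precisely this quantitative refinement for the dodecahedron: the gap between $29/3$ and $48-10\sqrt{15}$ is only about $0.4$, so the combinatorial argument must extract a definite, not purely qualitative, saving from right-angledness, and the re-optimisation of $(c_1,c_2,c_3)$ has to respect the worst of the three cases $r(v)=1,2,3$ simultaneously. Once an improved bound on $\rt(\CG)$ is in hand, Theorems \ref{thmgs} and \ref{gr} combine exactly as in the closing real-variable calculation of the proof of Theorem \ref{3phgen} to give $\gs(\CG) < \gr(\CG)$, and hence $\pc(\CG) < \pu(\CG)$, for both bond and site percolation.
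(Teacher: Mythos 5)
Your reduction is exactly the paper's: by Remark \ref{g<=gs<=rt} it suffices to show $\gs(\CG)<\gr(\CG)$; Theorem \ref{gr} gives $\gr(\CG)$ exactly; $k\ge 12$ by the pentagon/trihedral count; $k\ge 13$ is already covered by Theorem \ref{3phgen}; and for $k=12$ you correctly compute that Theorem \ref{thmgs} demands an upper bound on $\rt(\CG)$ strictly below $y_{12}+11/y_{12}=48-10\sqrt{15}\approx 9.27$, which Lemma \ref{rhogen}'s $29/3$ misses. All of that matches the paper.

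The gap is that you never actually produce the sharper bound on $\rt(\CG)$ --- the entire quantitative content of the $k=12$ case. Your three ``features of the right-angled setting'' are plausible but none is turned into a number, and the one combinatorial input the paper actually extracts is absent from your list: every face of a compact right-angled $\Pi$ has at most $(k-1)/2$ neighbours (Lemma \ref{Delta}, proved by exhibiting, for a face $F$ with $\Delta$ sides, the $2\Delta$ pairwise distinct faces $F'_i$ and $F''_i$, so $k\ge 2\Delta+1$). Feeding $\Delta\le(k-1)/2$ into Claim \ref{eN'} bounds $q_2(v)+q_3(v)$ by the number of faces neighbouring those traversed by edges passing to $v$, case by case in $r(v)$, and with the choice $c_1=5$, $c_2=2$, $c_3=1$ Gabber's corollary yields $\rt(\CG)\le k/2+31/10$, i.e.\ $9.1<9.27$ for $k=12$ (Lemma \ref{rhoRACpt}). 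Without some such explicit input, ``reoptimise $(c_1,c_2,c_3)$'' cannot be checked to clear a margin of only about $0.17$, so as written the proof of the one genuinely new case is missing rather than merely sketched. (A minor side remark: the uniqueness of the dodecahedron among $k=12$ cases is neither needed nor used --- the paper's Lemma \ref{rhoRACpt} applies to any compact right-angled $\Pi$.)
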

%\begin{thm*}[recalled theorem \ref{3phRACpt}]
%If $\Pi$ is a compact right-angled polyhedron in $\HHH$, then in the setting of hypothesis \ref{PiG} the inequality $\gs(\CG)<\gr(\CG)$ holds, hence also (see rem.~\ref{g<=gs<=rt})
%$$\pc(\CG)<\pu(\CG)$$
%for bond and site Bernoulli percolation on $\CG$.
%\end{thm*}
The proof is analogous to the proof of theorem \ref{3phgen}, but I don't use theorem \ref{gr>=} (rather theorem \ref{gr}) and I use an additional fact:
\begin{lem}\label{Delta}
In the setting of the above theorem, $\Delta\le\frac{k-1}{2}$.
\end{lem}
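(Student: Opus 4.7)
My plan is to work in the nerve $L$ of $(G,S)$, which by the earlier claim in this section is a flag triangulation of $\SS$ whose vertices correspond to faces of $\Pi$ (adjacent vertices meaning faces sharing an edge), so that the degree of a vertex of $L$ equals the degree of the corresponding face and in particular $\Delta$ is the maximum vertex degree of $L$. Besides the flagness of $L$ (already used in this section via claim \ref{no4inL}), I would invoke two further consequences of Andreev's theorem in the compact right-angled case: every face of $\Pi$ has at least five edges, because the hyperbolic Gauss--Bonnet formula forces a right-angled compact hyperbolic polygon to have strictly more than four sides, so every vertex of $L$ has degree at least $5$; and $\Pi$ admits no prismatic $4$-circuit, which in $L$ translates as: every $4$-cycle has a chord.

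Having set this up, I would pick $v \in L$ realising $\deg_L(v) = d := \Delta$ and label the link cyclically as $v_1, v_2, \dots, v_d$. Flagness immediately rules out chords of this link cycle: an edge $v_i v_j$ with $|i-j| \not\equiv \pm 1 \pmod d$ would force $\{v,v_i,v_j\}$ to be a $2$-simplex of $L$, but every $2$-simplex of $L$ containing $v$ is of the form $v v_\ell v_{\ell+1}$. I would then define
\[
X_i \;:=\; \{\, w \in V(L) : w \sim v_i \text{ and } w \notin \{v, v_{i-1}, v_{i+1}\}\,\}.
\]
Since $\deg_L(v_i) \ge 5$ we have $|X_i| \ge 2$, and by the previous observation every element of $X_i$ lies outside $\{v,v_1,\dots,v_d\}$.

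The decisive step is controlling how much the $X_i$'s overlap: if $w \in X_i \cap X_j$ for $i \ne j$, then $v$--$v_i$--$w$--$v_j$--$v$ is a $4$-cycle in $L$, whose only possible chords are $v w$ (excluded because $w \notin \{v_1,\dots,v_d\}$) and $v_i v_j$, forcing $j = i \pm 1 \pmod d$. Hence each external $w$ lies in at most two of the $X_i$'s, so
\[
\bigg|\bigcup_{i=1}^{d} X_i\bigg| \;\ge\; \tfrac{1}{2}\sum_{i=1}^{d} |X_i| \;\ge\; d.
\]
Together with the $d+1$ vertices $v,v_1,\dots,v_d$, this produces at least $2d+1$ distinct vertices of $L$, i.e.\ $k \ge 2d+1$, which is exactly $\Delta \le (k-1)/2$.

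The point I expect to be the most delicate is the appeal to the no-prismatic-$4$-circuit property: this is the genuinely new geometric input from the compact right-angled hypothesis, and I would either cite the appropriate version of Andreev's theorem (for instance \cite[Thm.~6.10.2]{Dav}) or give a short geometric argument showing that four faces of $\Pi$ forming an edge-adjacency cycle in $\partial\Pi$ cannot all be pairwise perpendicular without two nonconsecutive ones already sharing an edge. Everything else is combinatorial bookkeeping on the link of $v$ in $L$.
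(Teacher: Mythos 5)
Your proof is correct, but it takes a genuinely different route from the paper's. The paper argues directly on the polyhedron: it fixes a face $F$ with $\Delta$ sides, takes the $\Delta$ faces $F'_i$ sharing an edge with $F$ together with the $\Delta$ faces $F''_i$ attached at the far ends of the edges of $\Pi$ perpendicular to $F$ at its vertices, and verifies by elementary geometric arguments (distinct lines, distinct closest points, disjointness from $F$) that $F,F'_1,\dots,F'_\Delta,F''_1,\dots,F''_\Delta$ are pairwise distinct, giving $k\ge 2\Delta+1$. You instead work entirely in the nerve $L$, realizing the same $1+\Delta+\Delta$ count combinatorially: the link of a maximal-degree vertex plays the role of the $F'_i$, and the second layer $F''_i$ is replaced by a double-counting of the sets $X_i$, whose overlap bound (a common element of $X_i$ and $X_j$ forces $j=i\pm1$) rests on the no-empty-square / no-prismatic-$4$-circuit clause of Andreev's theorem. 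Your combinatorial steps all check out: flagness plus the fact that the triangles of $L$ at $v$ are exactly $vv_\ell v_{\ell+1}$ does rule out chords of the link cycle, $\deg\ge 5$ does give $|X_i|\ge 2$, and since $d=\Delta\ge5$ each external vertex indeed lies in at most two (necessarily consecutive) $X_i$'s. The trade-off is that you import an extra piece of Andreev's theorem that the paper never needs (it invokes \cite[thm.~6.10.2]{Dav} only to get flagness of $L$), whereas the paper's distinctness checks are self-contained; in exchange your argument is purely combinatorial and actually proves the more general statement that any flag triangulation of $\SS$ with minimum degree $5$ in which every $4$-cycle has a chord satisfies $\Delta\le(f_0-1)/2$, independently of any hyperbolic realization.
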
 
\begin{proof}
Take any face $F$ of $\Pi$ with $\Delta$ sides. Each edge of $F$ belongs to unique face $F'_i$ other than $F$, where $i=1,\ldots,\Delta$ numerates these edges. There are also $\Delta$ edges of $\Pi$ perpendicular to $F$, incident to its vertices. Outside $F$, at the ends of those edges there are attached faces $F''_i,i=1,\ldots,\Delta$ perpendicular to those edges, respectively.
\par Now:
\begin{itemize}
\item the faces $F,F'_1,\ldots,F'_\Delta$ are pairwise distinct, because no two edges of $F$ lie on a common line;
\item the faces $F''_i,i=1,\ldots,\Delta$ are pairwise distinct, because each of the planes containing them determines uniquely the closest point in the plane of $F$ (disjoint with them) and those points---the vertces of $F$---are pairwise distinct;
\item any face $F''_i,i=1,\ldots,\Delta$ is distinct from any face $F,F'_1,\ldots,F'_\Delta$, because the former are disjoint with $F$ and the latter are not.
\end{itemize}
So all the faces $F,F'_1,\ldots,F'_\Delta,F''_1\ldots,F''_\Delta$ are pairwise distinct, which shows that $k\ge2\Delta+1$ and finishes the proof of the lemma.
\end{proof}
\begin{rem}\label{k>=12}
In the setting of theorem \ref{3phRACpt}, $k\ge 12$. To see that, note first that $k\ge 11$, because a face of $\Pi$, which is right-angled, must have at least $5$ sides, so in lemma \ref{Delta} $\Delta\ge 5$. Now, assume \emph{a contrario} that $k=11$. Then from lemma \ref{Delta} $\Delta=5$, so all faces of $\Pi$ are pentagons. Since $\Pi$ is compact, we obtain twice the number of its edges counting all sides of each of the faces. That gives $5k$, so $k$ must be even, a contradiction.
\end{rem}
\begin{lem}\label{rhoRACpt}
In the setting of theorem \ref{3phRACpt}, we have
$$\rt(\CG)\le \frac{k}{2}+3\frac{1}{10}.$$
\end{lem}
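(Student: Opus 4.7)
The proof will follow the blueprint of Lemma \ref{rhogen}: apply Gabber's lemma in the form of corollary \ref{Gabcor}, with the edge function $F(e) = c_{r(e_+)}$ parameterised by three positive constants $c_1, c_2, c_3$, and bound
\[
f_v(\bar c) = r(v)\, c_{r(v)} + \sum_{i=1}^3 \frac{q_i(v)}{c_i}
\]
case-by-case according to $r(v) \in \{1,2,3\}$. As before, the bounds $q_3(v)\le 0,2,3$ for $r(v)=1,2,3$ from Lemma \ref{q_3gen} still apply, and in addition $q_1(v) + q_2(v) + q_3(v) = k - r(v)$. My plan is to choose $c_1,c_2,c_3$ near $2$, using that the dominant contribution $(k-r(v))/c$ is controlled by $c\approx 2$, and then optimise the small corrections coming from the $r(v)\,c_{r(v)}$ term and from the $q_3$ correction $(1/c_3 - 1/c_1)$.

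The key improvement over the general case comes from two structural ingredients specific to the compact right-angled setting. First, the right-angled hypothesis forces the faces that pass to $v$ to meet perpendicularly: when $r(v)=3$ they form a trihedral corner of mutually perpendicular faces sharing a vertex of $v\Pi$, and when $r(v)=2$ they are perpendicular and share a single edge. This should allow one to refine the bounds in Lemma \ref{q_3gen} (in particular by showing that for $r(v)=2$ the two possible choices for $q_3$-contributing edges are mutually exclusive, potentially giving $q_3(v)\le 1$), and to control $q_2(v)$ more carefully. Second, Lemma \ref{Delta} gives $\Delta \le (k-1)/2$, so the face $e\d$ of any edge from $v$ has at most $\Delta \le (k-1)/2$ neighbours in $\bd(v\Pi)$; this limits $|N|$ and hence $r(e_+)$ via claim \ref{eN'}, yielding a lower bound on the number of edges leading to vertices with small $r$-value.

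With these refined bounds plugged into \eqref{f(r,q)2}, a well-chosen $\bar c$ (the $31/10$ in the target bound strongly suggests parameters like $c_1 = c_2 = 2$ with $c_3$ slightly smaller) will give in each of the cases $r(v) \in \{1,2,3\}$ an estimate
\[
f_v(\bar c) \le \frac{k}{2} + \frac{31}{10},
\]
after which corollary \ref{Gabcor} concludes the proof. The value $k/2 + 31/10$ is then taken as $C_F$.

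The main obstacle will be obtaining the sharpened bounds on $q_2(v)$ and $q_3(v)$ in the right-angled setting: using claim \ref{eN'} together with the trihedral-corner rigidity of right-angled compact polyhedra in $\HHH$ (Andreev's theorem, together with the fact that each vertex of $\Pi$ is the meeting point of exactly three mutually perpendicular faces) to rule out configurations that are allowed in the general Coxeter case. Once those combinatorial bounds are in hand, the optimisation of $(c_1,c_2,c_3)$ is a direct calculation, and verifying $f_v(\bar c) \le k/2 + 31/10$ reduces to comparing three explicit expressions and choosing the constants that balance them (the $r(v)=2$ case, which already saturated in the proof of Lemma \ref{rhogen}, will be the tightest).
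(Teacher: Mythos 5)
Your overall skeleton is the paper's: apply corollary \ref{Gabcor} with $F(e)=c_{r(e_+)}$, split on $r(v)$, and bring in lemma \ref{Delta} together with claim \ref{eN'} to show that most edges leaving $v$ go to vertices with $r=1$. That second ingredient is indeed the crux, and you have correctly identified it. However, there is a genuine gap in the execution. First, the quantitative form of that ingredient is never stated: what the paper actually extracts is an upper bound on $q_2(v)+q_3(v)$, namely the number of faces of $v\Pi$ neighbouring the $r(v)$ faces traversed by edges passing to $v$, which by lemma \ref{Delta} is at most $\Delta\le\frac{k-1}{2}$, $2\Delta-4\le k-5$, $\min(3\Delta-9,\,k-3)$ for $r(v)=1,2,3$ respectively. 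Without writing these numbers down, the optimisation over $\bar c$ cannot be carried out. Second, and more seriously, your proposed parameters ($c_1=c_2=2$ with $c_3$ slightly below $2$) cannot work, and in fact discard the very advantage you set up: if $c_1=c_2$, then the bound on $q_2(v)+q_3(v)$ (equivalently, the lower bound on $q_1(v)$) contributes nothing, since $q_1$ and $q_2$ are weighted identically in \eqref{f(r,q)2}. Concretely, with $c_1=c_2=2$ the case $r(v)=3$, $q_3(v)=3$ gives $f_v(\bar c)=\frac{k-6}{2}+3\left(c_3+\frac{1}{c_3}\right)\ge\frac{k}{2}+3$ with equality only at $c_3=1$, and then the case $r(v)=2$, $q_3(v)=2$ gives $f_v(\bar c)=4+\frac{k-4}{2}+2=\frac{k}{2}+4$, which exceeds the target $\frac{k}{2}+3\frac{1}{10}$. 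The paper instead takes $c_1=5$, $c_2=2$, $c_3=1$, so that the large $q_1(v)$ guaranteed by the $\Delta$-bound is divided by $5$ rather than by $2$; the binding case is then $r(v)=2$, yielding exactly $\frac{10k+62}{20}=\frac{k}{2}+3\frac{1}{10}$. Finally, your proposed sharpening of lemma \ref{q_3gen} to $q_3(v)\le1$ when $r(v)=2$ is neither proved nor needed, and even if true it would not rescue the choice $c_1=c_2=2$, which would still give $\frac{k}{2}+3\frac{1}{2}$ in that case.
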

\begin{proof}
Note that every vertex of $v\Pi$ is adjacent to exactly $3$ faces of $v\Pi$, because a polyhedral angle with all dihedral angles right must be trihedral. For $o\neq v\in V(\CG)$, consider the faces of $v\Pi$ traversed by edges passing to $v$. Then, from lemma \ref{Delta}, the number of other faces of $v\Pi$ neighbouring them is at most
$$\{
\begin{array}{l@{\text{ if }}l}
\Delta\le\frac{k-1}{2},& r(v)=1\\
2+2(\Delta-3)=2\Delta-4\le k-5,& r(v)=2\\
\min(3+3(\Delta-4),k-r(v))\le k-3,& r(v)=3\\
\end{array}\right.$$
and so is $q_2(v)+q_3(v)$ (because of claim \ref{eN'}---as in proof of lemma \ref{q_3gen}).
Now, let $c_1=5,c_2=2,c_3=1$. For $o\neq v\in V(\CG)$, basing on all that and on lemma \ref{q_3gen} itself, I estimate (similarly to proof of lemma \ref{rhogen}; note that below, taking first the smallest possible value for $q_1(v)=k-r(v)-q_2(v)-q_3(v)$, then the smallest possible value for $q_2(v)$, indeed gives the upper bounds, because $c_1>c_2>c_3$):
\begin{align*}
\textrm{if $r(v)=1$, then} &\quad f_v(\bar c)=5 + \frac{q_1(v)}{5} +  \frac{q_2(v)}{2} + 0 \le 5 + \frac{\frac{k-1}{2}}{5} +  \frac{\frac{k-1}{2}}{2} = \frac{7k+93}{20},\\
\textrm{if $r(v)=2$, then} &\quad f_v(\bar c)=2\cdot 2 + \frac{q_1(v)}{5} +  \frac{q_2(v)}{2} + q_3(v) \le 4+\frac{3}{5}+\frac{k-7}{2}+2 = \frac{10k+62}{20},\\
\textrm{if $r(v)=3$, then} &\quad f_v(\bar c)=3\cdot 1 + \frac{q_1(v)}{5} +  \frac{q_2(v)}{2} + q_3(v)\le 3+\frac{k-6}{2}+3 = \frac{10k+60}{20}
\end{align*}
and for $v=o$, $f_o(\bar c)=\frac{k}{5}=\frac{4k}{20}$, so we put $C_f=\frac{10k+62}{20}$ in corollary \ref{Gabcor}, as it is the largest of the bounds above (because $k\ge 12$, due to rem.~\ref{k>=12}), and we obtain the lemma.
\end{proof}
\begin{proof}[Proof of the theorem \ref{3phRACpt}]
First, note that, due to remark \ref{g<=gs<=rt}, it is sufficient to show that $\gs(\CG)<\gr(\CG)$ to prove the theorem.
\par Because the conclusion is shown for $k>12$ in general case in theorem \ref{3phgen}, it is sufficient to show it for $k=12$ (because of remark \ref{k>=12}).
So I calculate:
\begin{equation*}
\rt(\CG) \le 9\frac{1}{10},\quad\textrm{so}\quad
\gs(\CG) \le \frac{91+\sqrt{3881}}{20} < 4+\sqrt{15} = \gr(\CG).
\end{equation*}
which shows the desired inequality.
\end{proof}
\section{Remarks on using particular tools}\label{secrems}
Recall hypothesis \ref{PiG}.
\par In the below table I give the conditions (lower bounds) for $k$, necessary and sufficient for obtaining the inequality $\pc(\CG)<\pu(\CG)$ by means of lemma \ref{rhobasic}, \ref{rhogen} and \ref{rhoRACpt}, respectively, and using the value $\gs(\CG)$ or only $\rt(\CG)$, respectively.
\begin{center}
\begin{tabular}{|l||c|c|}
\hline
&Using $\rt$&Using $\gs$\\\hline\hline
Using lem.~\ref{rhobasic}&$k\ge 18$&$k\ge 15$\\\hline
Using lem.~\ref{rhogen}&$k\ge 15$&$k\ge 13$\\\hline
Using lem.~\ref{rhoRACpt}&$k\ge 15$&$k\ge 12$\\\hline
\end{tabular}
\end{center}
One can see that in the above cases th use of $\gs$ improves the bound on $k$ by $2$ or $3$. A similar effect has using lemma \ref{rhogen} instead of \ref{rhobasic} (both valid for the general case from hypothesis \ref{PiG}). Using lemma \ref{rhoRACpt} (proved here only for compact right-angled $\Pi$) plays a role only for $k=12$, when using $\gs$ (which covers the case of example \ref{dodecah}).
\appendix
\section{Estimate for $\gs$}\label{appxgs}
Below, I am going to prove the relations between $\gs$ and $\rt$ in the theorem \ref{thmgs}:
\begin{thm*}[recalled theorem \ref{thmgs}]
Let $\CG$ be an arbitrary regular graph of degree $k\ge 2$ (not necessarily simple) with distinguished vertex $o$ and let $\rt=\rt(\CG,o)$, $\gs=\gs(\CG,o)$. Then
\begin{equation*}
\gs \le \frac{\rt+\sqrt{\rt^2-4(k-1)}}{2}.
\end{equation*}
If, in addition, $\rt(\CG)>2\sqrt{k-1}$ (e.g. when $\CG$ is vertex-transitive and simple and is not a tree), then the estimate becomes an identity:
\begin{equation*}
\gs = \frac{\rt+\sqrt{\rt^2-4(k-1)}}{2}.
\end{equation*}
\end{thm*}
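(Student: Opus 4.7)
The plan is to derive a functional identity between the generating function of non-backtracking closed walks at $o$ and the Green's function of $\CG$ at $o$, and then read off the inequality by comparing radii of convergence. Let $A$ denote the adjacency matrix of $\CG$ (indexed by $V(\CG) \times V(\CG)$, with multiplicities if $\CG$ has multiple edges), and for $n \ge 0$ let $A_n$ be the matrix with $A_n(x,y)$ equal to the number of non-backtracking walks of length $n$ from $x$ to $y$; in particular $A_0 = I$, $A_1 = A$, and $A_n(o,o) = a_n^*(\CG, o)$. The key combinatorial observation---extending a non-backtracking walk of length $n$ by one more edge either still gives a non-backtracking walk, or retraces the previous edge---yields the recursion
\begin{equation*}
A_n A \;=\; A_{n+1} + (k-1)\,A_{n-1} \quad (n \ge 2), \qquad A_1 A \;=\; A_2 + k\,I,
\end{equation*}
where the coefficient is $k$ instead of $k-1$ at $n = 1$ because a single edge has no ``previous edge'' to forbid.

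Setting $g(z) := \sum_{n \ge 0} A_n z^n$, I multiply the recursion by $z^{n+1}$ and sum over $n \ge 1$. After collecting the boundary terms $A_0, A_1, A_2$ carefully, this gives the closed form
\begin{equation*}
g(z)\bigl(1 - Az + (k-1)z^2\bigr) \;=\; (1 - z^2)\,I.
\end{equation*}
Taking the $(o, o)$-entry and substituting $y = 1/z + (k-1)z$, under which $1 - Az + (k-1)z^2 = z(y - A)$, and using the resolvent expansion $(y - A)^{-1}(o, o) = y^{-1}\sum_n C_n(\CG, o) y^{-n}$, converts the matrix identity into the scalar identity
\begin{equation*}
B(z) \;:=\; \sum_{n \ge 0} a_n^*(\CG, o)\, z^n \;=\; \frac{1 - z^2}{1 + (k-1)z^2}\, F\!\left(\frac{z}{1 + (k-1)z^2}\right),
\end{equation*}
where $F(t) := \sum_{n \ge 0} C_n(\CG, o)\, t^n$ is the Green's function at $o$.

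The inequality then drops out of radius-of-convergence considerations. The map $\phi(z) := z/(1 + (k-1)z^2)$ is strictly increasing on $[0, 1/\sqrt{k-1}]$ with maximum value $1/(2\sqrt{k-1})$. Under the hypothesis $\rt \ge 2\sqrt{k-1}$---which is the only case in which the stated bound is non-vacuous, and which holds automatically in the simple case by comparison with the universal cover $k$-regular tree---the equation $\phi(z) = 1/\rt$ has a unique root $z_1 = (\rt - \sqrt{\rt^2 - 4(k-1)})/(2(k-1))$ in $(0, 1/\sqrt{k-1}]$; rationalizing the denominator gives $1/z_1 = (\rt + \sqrt{\rt^2 - 4(k-1)})/2$. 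For $z \in [0, z_1)$ one has $\phi(z) \in [0, 1/\rt)$, so $F(\phi(z))$ converges and the right-hand side of the scalar identity is finite; since $B$ has non-negative coefficients, its radius of convergence is therefore at least $z_1$, whence $\gs \le 1/z_1$ as claimed.

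For the equality statement under the strict inequality $\rt > 2\sqrt{k-1}$, so $z_1 < 1/\sqrt{k-1}$, I invert the scalar identity into $F(\phi(z)) = \frac{1 + (k-1)z^2}{1 - z^2}\,B(z)$; if $B$ were to converge at some $z \in (z_1, 1/\sqrt{k-1})$, then $F$ would be finite at $\phi(z) > 1/\rt$, contradicting the fact that a power series with non-negative coefficients must diverge past its radius of convergence. Hence the radius of convergence of $B$ is exactly $z_1$ and the bound is sharp. The main obstacle will be the careful bookkeeping in deriving the matrix generating function identity, especially handling the anomalous $n = 1$ term in the recursion; once the functional identity is in hand, the analytic consequences are essentially automatic.
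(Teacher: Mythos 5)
Your proposal is correct in substance but reaches the key functional identity by a genuinely different route. The paper lifts cycles at $o$ to the universal cover $T_k$, obtaining the combinatorial decomposition $C_n=\sum_{d} a^*_d\,c^{T_k}(n,d)$ with all terms non-negative, and then plugs in the explicit tree Green function $\sum_n c^{T_k}(n,d)z^n=A(z)f(z)^d$ to get $\F(C)(r)=A(r)\,\F(a^*)(f(r))$. You instead derive the Ihara-type recursion $A_nA=A_{n+1}+(k-1)A_{n-1}$ (with the anomalous $n=1$ term), sum it into $g(z)(1-Az+(k-1)z^2)=(1-z^2)I$, and extract the scalar relation via the resolvent substitution $y=1/z+(k-1)z$. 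These are in fact the \emph{same} identity in two parametrizations: your $\phi(z)=z/(1+(k-1)z^2)$ is the inverse of the paper's $f$, and $(1+(k-1)z^2)/(1-z^2)$ evaluated at $z=f(r)$ equals the paper's $A(r)$; your quadratic root $z_1$ and the ensuing radius-of-convergence endgame (including the Pringsheim-type argument for the equality case when $\rt>2\sqrt{k-1}$) match the paper's two claims about $f(\r(C))$ essentially verbatim. What the paper's route buys is that every rearrangement is justified by Tonelli, since the tree decomposition has only non-negative terms.

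That is also where your write-up is thinnest. The matrix manipulation yields an identity of \emph{formal} power series, and the expansions that arise when you invert $1-Az+(k-1)z^2$ and substitute $t=\phi(z)$ into $F$ have coefficients of mixed sign (e.g.\ $\phi(z)^n$ expands with alternating signs), so you cannot simply evaluate both sides at a real $z$ and claim the numerical identity holds wherever one side converges; in particular the step ``if $B$ converges at $z\in(z_1,1/\sqrt{k-1})$ then $F(\phi(z))<\infty$'' needs an argument. This is repairable by standard means (working with the $\ell^2$ resolvent of the adjacency operator, or by reverting to the non-negative tree decomposition, which is exactly what the paper does), but as written ``the analytic consequences are essentially automatic'' oversells it. A second, minor point: you should also justify $\rt\ge 2\sqrt{k-1}$ in the general (non-simple) case so that the first bound is well-posed; the paper gets this from $C_n(\CG,o)\ge c^{T_k}(n,0)$ via the same covering map.
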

%\subsection{A formula involving $C_n$ and $a^*_n$}
\begin{proof}
Recall notations \ref{Cas}.
The below proposition is basic for obtaining the estimate for $\gs$.
\begin{prop}
Let $\CG$ be any (undirected) regular graph of degree $k\ge 2$ (not necessarily simple). Let choose its vertex $o$ for being its origin. Now, I consider the values $C_n(\CG,o)$ and $a^*_n(\CG,o)$---let us call them $C_n$, $a^*_n$, respectively, for short. Then
\begin{equation*}
C_n = \sum_{d=0}^n a^*_d c^{T_k}(n,d),
\end{equation*}
where for any natural $n$, $c^{T_k}(n,d)$ is the number of paths of length $n$ in $T_k$ joining two points with distance $d$ between them (note that $c^{T_k}(n,d)$ is well-defined).
\end{prop}
\begin{proof}
Let us consider the universal cover%
\footnote{Universal cover of a graph $\CG$ (not necessarily simple) is a graph $\tilde \CG$ with mapping $p:\tilde \CG\to \CG$, i.e.~$p$ maps vertices and edges of $\tilde\CG$ to vertices and edges of $\CG$, respectively, such that for any incident vertex $v$ and edge $e$ of $\tilde\CG$, $p(v)$ and $p(e)$ are also incident, and for any $v\in V(\CG)$ and $\tilde v\in V(\tilde\CG)$ with $p(\tilde v)=v$, the set of edges incident with $\tilde v$ maps bijectively to the set of edges incident with $v$.}%
 of $\CG$, which is the infinite $k$-regular tree $T_k$ (with origin being a vertex $\ot$ chosen to cover $o$). Then every path in $\CG$ starting from $o$ lifts to a unique path starting from $\ot$ in $T_k$ which covers it. In particular, every cycle in $\CG$ starting from $o$ lifts to a unique path in $T_k$ joining $\ot$ to some vertex $\ot'$ covering $o$. So, for any natural $n$, if $d(\ot,\ot')$ is the distance between $\ot$ and $\ot'$, then
\begin{equation}
C_n = \sum_{\ot'\in T_k\text{ covering }o}c^{T_k}(n,d(\ot,\ot')) = \sum_{d=0}^n\#\{\ot'\in T_k\text{ covering $o$}: d(\ot,\ot')=d\}c^{T_k}(n,d),\label{eqprfCas}
\end{equation}
Now, the number of $\ot'\in T_k$ at distance $d$ from $\ot$, covering $o$ is exactly the number of geodesic segments of length $d$ from $\ot$ to such vertices $\ot'$. But a cycle $P$ in $\CG$ lifts to a geodesic in $T_k$ if and only if $P$ does not admit any backtracks. Hence
$$C_n=\sum_{d=0}^n a^*_d c^{T_k}(n,d).$$
\end{proof}
%\subsection{Estimates for $\gs$}
\begin{denot1}
  For any numerical sequence $x=(x_n)_{n=0}^\infty$, denote by $\F(x)$ its generating function, i.e. power series (of $z\in\mathbb{C}$)
$$\F(x)(z)=\sum_{n=0}^\infty x_n z^n.$$
\end{denot1}
\begin{rem}
In this appendix, I consider every sum of a power series as a sum (a number, if it is convergent, or $\pm\infty$, if it diverges to $\pm\infty$), rather than value of some holomorphic continuation of it, unless indicated otherwise.
\end{rem}
We calculate $\F(C)(r)$ for $r\ge 0$:
\begin{equation*}
\F(C)(r) = \sum_{n=0}^\infty C_n r^n = \sum_{n=0}^\infty \sum_{d=0}^n a^*_d c^{T_k}(n,d) r^n = \sum_{d=0}^\infty a^*_d \sum_{n=d}^\infty c^{T_k}(n,d) r^n.
\end{equation*}
As $c^{T_k}(n,d)=0$ for $n<d$, and $c^{T_k}(n,d)/k^n$ is the probability of passing from $\ot$ to some fixed $\ot'$ at distance $d$ from $\ot$ in the simple random walk on $T_k$ in $n$ steps, we have for $z\in\mathbb{C}$
$$\sum_{n=d}^\infty c^{T_k}(n,d) z^n = \sum_{n=0}^\infty (c^{T_k}(n,d)/k^n)(kz)^n = G(d,kz),$$
where $G(d,\cdot)$ is the Green function\footnote{As a power series. For definition, see \cite[1.6]{Woe}.} for the simple random walk on $T_k$ and a pair of its vertices at distance $d$. From lemma 1.24 from \cite{Woe} we have
\begin{align}
G(d,kz) &= \frac{2(k-1)}{k-2+\sqrt{k^2-4(k-1)(kz)^2}}\left(\frac{k-\sqrt{k^2-4(k-1)(kz)^2}}{2(k-1)kz}\right)^d=\notag\\
&= \underbrace{\frac{2(k-1)}{k-2+k\sqrt{1-4(k-1)z^2}}}_{A(z)}\left(\underbrace{\frac{1-\sqrt{1-4(k-1)z^2}}{2(k-1)z}}_{f(z)}\right)^d,\label{Green}
\end{align}
(where we always take the standard branch of square root\footnote{Note that here, under the square roots, if $z$ is close to $0$, then we have values close to $1$}, with $\sqrt{1}=1$). Let us introduce notations $A(z),f(z)$, as indicated in the above formula. For $z=0$, formally, there is a problem with $f(z)$ defined by such formula, but
$$f(z) = \frac{2z}{1+\sqrt{1-4(k-1)z^2}}$$
(for $z$ such that it exists), so if we put $f(0)=0$, then formula \eqref{Green} is satisfied, because $G(d,0)=0^d$. Then the right-hand side of \eqref{Green} is holomorphic on the $0$-centred open ball $B(1/2\sqrt{k-1})$, so the equality holds on that ball. Hence, for $r\in [0;1/2\sqrt{k-1})$,
\begin{equation}
\F(C)(r) = A(r) \sum_{d=0}^\infty a^*_d f(r)^d = A(r) F(a^*)(f(r))%\label{eqgenfcnsCas}
\end{equation}
Note that here $0<A(r)<\infty$, so for $r$ as above,
\begin{equation}\label{FCfinFAfin}
F(C)(r)<\infty \Leftrightarrow F(a^*)(f(r))<\infty.
\end{equation}
\begin{clm}
Put $f(1/2\sqrt{k-1})=1/\sqrt{k-1}$.\footnote{Actually, it follows from \eqref{Green}, although $1/2\sqrt{k-1}$ is a singularity of $f$ as a holomorphic function, so I am avoiding a doubt.} Then $f:[0;1/2\sqrt{k-1}] \to [0;1/\sqrt{k-1}]$ is strictly increasing and onto.
\end{clm}
\begin{proof}
The strict monotonicity is obvious, as well as the continuity. Since $f(0)=0$ and $f(1/2\sqrt{k-1})=1/\sqrt{k-1}$, being onto follows from the Darboux property.
\end{proof}
\begin{rem}
Further in this appendix, I restrict $f$ to $[0;1/2\sqrt{k-1}]$ by default.
\end{rem}
\begin{denot1}\label{radconvrg}
For power series $F(z)=\sum_{n=0}^\infty x_n z^n$, by $\r(F)$ or $\r(x)$ I mean its radius of convergence (equal to $1/\mathrm{gr}((x_n)_n)$)
and by $\R(F)$ or $\R(x)$ I mean the set $\{r\ge 0: F(r)<\infty\}$.
\end{denot1}
\begin{clm}
$f(\r(C))=\min(\r(a^*),1/\sqrt{k-1})$.
\end{clm}
\begin{rem}
The left-hand side above makes sense, because $\r(C)=1/\rt \le 1/2\sqrt{k-1}$ (this bound can be obtained by noting that for $n\in\mathbb{N}$, $C_n(\CG,o)\ge c^{T_k}(n,0)=C_n(T_k,\ot)$---from \eqref{eqprfCas}---hence $\rt(\CG)\ge\rt(T_k)$ and $\rt(T_k)=2/\sqrt{k-1}$ by \cite[lem.~1.24]{Woe}).
\end{rem}
 \begin{proof}
 I am going to split the proof of the equality into bounding left-hand side by right-hand side and \emph{vice-versa}:
 \begin{enumerate}
 \item Proof of $f(\r(C)) \le \min(\r(a^*),1/\sqrt{k-1})$: Let $w\ge 0$. Assume that $w<f(\r(C))$ (hence $w<1/\sqrt{k-1}$). Then $f^{-1}(w)<\r(C)$ and $F(C)(f^{-1}(w))<\infty$, so from \eqref{FCfinFAfin} $F(a^*)(w)<\infty$ (as $f^{-1}(w)<1/2\sqrt{k-1}$) and so $w\le \r(a^*)$, which finishes this part of the proof.
 \item Proof of $f(\r(C)) \ge \min(\r(a^*),1/\sqrt{k-1})$: Let $w\ge 0$ and assume that $w<\min(\r(a^*),1/\sqrt{k-1})$. Then, if we put $r=f^{-1}(w)<1/2\sqrt{k-1}$, then we have $F(a^*)(f(r))<\infty$, so from \eqref{FCfinFAfin}, $F(C)(r)<\infty$, hence $r\le \r(C)$. That amounts to $w\le f(\r(C))$, which finishes the proof.
 \end{enumerate}
 \end{proof}
\comment{%%%%
\begin{proof}
From \eqref{FCfinFAfin} we have
$$[0;1/2\sqrt{k-1})\cap \R(F(C)) = [0;1/2\sqrt{k-1})\cap f^{-1}(\R(F(a^*))),$$
so, taking the images in $f$, we have
$$f[[0;1/2\sqrt{k-1})\cap \R(F(C))] = [0;1/\sqrt{k-1})\cap \R(F(a^*))\cap \im(f) = [0;1/\sqrt{k-1})\cap \R(F(a^*)).$$
Taking the suprema of those sets gives
$$f(\min(1/2\sqrt{k-1},\r(C))) = \min(1/\sqrt{k-1},\r(a^*))$$
and $$f(\r(C)) = \min(1/\sqrt{k-1},\r(a^*)),$$
because $\r(C)\le 1/2\sqrt{k-1}$.
\end{proof}
}%%%%
Now,
\begin{equation*}
f(\r(C)) = f(1/\rt) = \frac{2}{\rt+\sqrt{\rt^2-4(k-1)}},
\end{equation*}
hence
\begin{equation*}
\gs = \frac{1}{\r(a^*)} \le \frac{1}{f(\r(C))} = \frac{\rt+\sqrt{\rt^2-4(k-1)}}{2},
\end{equation*}
which proves the first part of theorem \ref{thmgs}. For the second part, assume that $\rt>2\sqrt{k-1}$. Then
$$\min(\r(a^*),1/\sqrt{k-1}) = f(\r(C)) < 1/\sqrt{k-1},$$
so $\r(a^*)<1/\sqrt{k-1}$, hence
$$f(\r(C)) = \r(a^*),$$
which, as above, gives the equality
$$\gs = \frac{\rt+\sqrt{\rt^2-4(k-1)}}{2}.$$
\end{proof}%of the thm.
\section{Estimate for growth rate of a Coxeter reflection group in $\HHH$}\label{appxgr}
In this appendix I prove theorem \ref{gr>=}. Recall the hypothesis \ref{PiG} and notions from subsection \ref{geomPiG}.
\begin{thm*}[recalled theorem \ref{gr>=}]
For $k$ and $\CG$ as in hypothesis \ref{PiG}, if we assume that $k\ge 6$, then the growth rate of $\CG$
$$\gr(\CG)\ge\frac{k-4+\sqrt{(k-4)^2-4}}{2}.$$
\end{thm*}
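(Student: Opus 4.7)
My plan is to bound $\gr(\CG)$ from below via the growth series $W(t) = \sum_n \#S(n)\, t^n$ of $(G,S)$, since $\gr(\CG)$ equals the reciprocal of the radius of convergence of $W$ (cf.~def.~\ref{grser}). The key tool I would invoke is the classical Serre--Steinberg--Davis identity
$$\frac{1}{W(t)} \;=\; \sum_{T\subseteq S,\,T\text{ spherical}} \frac{(-t)^{|T|}}{W_T(t)},$$
where $W_T$ is the growth polynomial of the (finite) parabolic $\langle T\rangle$. By claim \ref{no4inL} only subsets of size at most three are spherical, so the right-hand side is a finite alternating sum of rational functions whose denominators are growth polynomials of dihedral and of rank-three finite Coxeter groups.

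The main step is then a term-by-term comparison on the positive real axis between this expression for the general Coxeter system and the analogous expression for a ``right-angled model'' having the same nerve one-skeleton. For a spherical pair $\{s,t\}$ with $m(s,t) = m$ one has $W_{\{s,t\}}(t) = 1 + 2t + \cdots + 2t^{m-1} + t^m \ge (1+t)^2$ on $[0,1]$, with an analogous pointwise inequality for the rank-three parabolics whenever the underlying triangle group is spherical. A careful bookkeeping of the signs in Serre's formula should then show that the smallest positive real zero of $1/W(t)$ for the general system is no larger than the corresponding zero in the right-angled model, which by the calculation behind theorem \ref{gr} -- once the model's nerve is recognised as a flag triangulation of $\SS$ -- equals $1/\xi$ with $\xi = (k-4+\sqrt{(k-4)^2-4})/2$.

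I expect the main obstacle to be the nerve identification in the fully general case, because for a non-compact or non-right-angled Coxeter polyhedron in $\HHH$ the nerve is typically \emph{not} a flag triangulation of $\SS$: ideal vertices contribute no $2$-simplices, trihedral vertices with $1/p+1/q+1/r\le 1$ contribute none either, and non-neighbouring pairs of faces give no edges in the nerve. Consequently theorem \ref{gr} cannot be invoked as a black box. I would instead apply Andreev's theorem, together with the fact that $\Pi$ is finite-sided and its dihedral angles are all of the form $\pi/m$, to bound the numbers of spherical pairs and triples in $S$ purely in terms of $k$; then a direct one-variable calculation should show that the smallest positive real root of the resulting rational function is at most $1/\xi$ as soon as $k\ge 6$. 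This monotonicity/calculus step, together with carefully verifying that the comparison with the right-angled model actually yields a lower (rather than upper) bound on $\gr(\CG)$ -- the natural quotient direction goes the wrong way and has to be circumvented by working directly with the Serre identity -- is where I expect to spend most of the effort.
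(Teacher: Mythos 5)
Your overall strategy coincides with the paper's: apply Steinberg's formula for $1/W$, compare it on the positive real axis with a right-angled ``model'' whose reciprocal growth series factors as $\frac{t-1}{(t+1)^3}\bigl(-t^2+(k-4)t-1\bigr)$, and deduce that the least positive root of $1/W$ is at most $1/\xi$ with $\xi=\frac{k-4+\sqrt{(k-4)^2-4}}{2}$. You also correctly identify the central obstacle, namely that the nerve of $(G,S)$ need not be a flag triangulation of $\SS$. Two smaller slips first: the formula should read $\sum_T(-1)^{\#T}/W_T(t^{-1})$, not $\sum_T(-t)^{\#T}/W_T(t)$ --- these agree only in the right-angled case, where $W_T(t^{-1})=t^{-\#T}W_T(t)$; and the degenerate nerves with at most one edge (or a single triangle carrying all edges), where any Euler-type bookkeeping breaks down, need a separate argument --- the paper embeds a $(k-3)$-regular tree coming from $k-3$ pairwise non-commuting generators, which already gives growth rate $k-4\ge\xi$.

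The genuine gap is in your proposed resolution of the obstacle. Bounding the \emph{numbers} of spherical pairs and triples in terms of $k$ and then comparing term by term cannot work, because the triangle terms enter Steinberg's formula with a \emph{negative} sign: a deficit of spherical triangles --- which is exactly what ideal vertices and vertices with $1/p+1/q+1/r\le1$ produce --- pushes $1/W$ \emph{up}, i.e.\ in the wrong direction for the desired comparison $1/W\le1/W\rb$ on $(0;1]$. In the extreme case of no spherical triples at all, the naive bounds give only $1/W(t)\le 1-\frac{k}{t^{-1}+1}+\frac{3(k-2)}{(t^{-1}+1)^2}$, which exceeds $1/W\rb(t)$. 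The paper's fix is geometric and local: it embeds the nerve into $\bd\Pi\cup\bdi\Pi\cong\SS$, groups the edge and triangle contributions by the faces of the embedded $1$-skeleton, and proves a per-face inequality $A(f)\le B(f)$. For a $3$-sided face that is \emph{not} a spherical triangle, the missing negative term is recovered from the dihedral angle-sum inequality $\sum_e 1/m(e)\le1$ at the corresponding vertex of $\Pi$ together with the estimate $[2,m](s)\ge\frac{m}{2}(s+1)^2$ for $s\ge1$; for genuine spherical triangles one needs the classification of rank-$3$ spherical Coxeter groups ($\mathrm{A}_3$, $\mathrm{B}_3$, $\mathrm{H}_3$, $\mathrm{G}_2^{m}\times\mathbb{Z}_2$) and explicit inequalities between products of the polynomials $[n]$; faces with at least four sides are harmless because their target $B(f)$ has no negative term. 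Without some substitute for this compensation mechanism, your plan does not close.
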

\begin{proof}
%(Czy dobre sformułow.?)(chyba tak)\\
Let $W$ be the growth series of $G$ with respect to $S$ (see def.~\ref{grser}).
I use the below formula of Steinberg for $1/W(z)$.
\begin{thm}[Steinberg, \protect{\cite[1.28]{Stein}}, see also \protect{\cite[sect.~17.1]{Dav} or \cite[thm.~1]{Kolp}}]\label{Steinb}
$$\frac{1}{W(z)}=\sum_{T\in\mathcal{F}} \frac{(-1)^{\#T}}{W_T(z^{-1})}$$
(as formal Laurent series of $z$), where $\mathcal{F}=\{T\subseteq S:\langle T\rangle\text{ is finite}\}$ (i.e.~the family of spherical subsets of $S$) and $W_T$ is the growth series of the Coxeter group $\langle T\rangle$ with respect to generating set $T$.
\end{thm}
\begin{rem}
The above theorem is formulated in terms of formal Laurent series, as in \cite{Stein} (see notations 1.24 there). (Note that it makes sense, because every formal Laurent series admits formal Laurent series reciprocal to it, and for $T\in\mathcal{F}$, $W_T$ are polynomials.) On the other hand, it can be also viewed as a meromorphic function on $\mathbb{C}$, because due to \cite[1.26]{Stein}, it is a rational function. So for the rest of this appendix, I treat all growth series as meromorphic functions.
%Jeszcze o 0^{-1}
Also, I am going to use the convention: $1/0=\infty$, $1/\infty=0$.%, so that e.g.~left-hand side of the equality of theorem \ref{Steinb} is continuous in $z=0$.
\end{rem}
For a power series $\sum_{n=0}^\infty x_n z^n$, let $\mathrm{r}(x)$ be its radius of convergence (as in definition \ref{radconvrg}).
\begin{lem}[\protect{\cite[par.~7.21]{Titch}}]\label{poscoeff}
Any power series $\sum_{n=0}^\infty x_n z^n$ with $x_n\ge 0$ has a singularity at $\mathrm{r}(x)$.
\end{lem}
\begin{rem}\label{root1/W}
The radius of convergence of $W$, which is the least positive pole of $W$, due to the above lemma, equals the radius of convergence of
$$\sum_{n=0}^\infty\#B(n)z^n = \sum_{n=0}^\infty \sum_{m=0}^n \#S(m)z^n = W(z)/(1-z),$$
which is $1/\gr(\CG)$. Hence, it is the least positive root of the function $1/W$.
\end{rem}
\begin{denot}\label{L}
Let $L$ be the nerve of $(G,S)$ (as an abstract simplicial complex---see definition \ref{nerve}). Note that $\mathcal{F}$ is the set of all simplices in $L$ plus the empty set, so for $\s\in L$, $W_\s$ should be understood with $\s\subseteq S$. For a simplicial complex $C$ (abstract or geometric one), I denote by $V(C)$, $E(C)$ its sets of vertices and edges, respectively.
Note also that $L$ does not need to be connected.
\end{denot}
In order that the rest of the proof worked, I have to show now the conclusion of the theorem for three exceptional cases, and after that exclude them from consideration (hypothesis \ref{L!=s+vs}). Those cases are:
\begin{itemize}
\item the nerve $L$ contains only vertices (i.e.~no edges);
\item there is only $1$ edge in $L$;
\item there is only $1$ triangle in $L$ and no edges outside that triangle
\end{itemize}
and i consider them at once. Namely, in each of those cases, there is a set $S'\subseteq S$ of $k-3$ isolated vertices of $L$, which generates a subgroup $G'<G$ isomorphic to $\mathbb{Z}_2^{*(k-3)}$ (each of the free factors of $\mathbb{Z}_2^{*(k-3)}$ is generated by some $s\in S'$). Hence, the Cayley graph $\CG'$ of $(G',S')$, which is the infinite $(k-3)$-regular tree (with growth rate $k-4$), embeds in $\CG$, so
$$\gr(\CG) \ge \gr(\CG') = k-4 \ge \frac{k-4+\sqrt{(k-4)^2-4}}{2},$$
which finishes the proof for the three cases above. Now, I exclude those cases:
\begin{hypot}\label{L!=s+vs}
For the rest of the proof, let $L$ contain more that $1$ edge and not have exactly $1$ triangle containing all the edges of $L$. (In other words: $L$ doesn't amount to a collection of isolated vertices and single $0$-, $1$- or $2$-simplex.)
\end{hypot}
Using claim \ref{no4inL} and theorem \ref{Steinb}, I calculate a formula for $1/W(t)$:
\begin{equation}\label{1/W3sums}
\frac{1}{W(t)} = 1 - \sum_{v\text{---a vertex of }L} \frac{1}{W_{\{v\}}(t^{-1})} + \sum_{e\text{---a side of }L} \frac{1}{W_e(t^{-1})} - \sum_{f\text{---a $2$-simplex of }L} \frac{1}{W_f(t^{-1})}.
\end{equation}
\begin{rem}
Now, the main idea of the proof is to use the ,,right-angled compact'' counterpart of $W$. Namely, imagine a right-angled Coxeter group $G\rb$ with generating set $S\rb$ of $k$ elements and with nerve $L\rb$ which is a flag triangulation of $\SS$ (as in theorem \ref{gr}; I use a convention of putting $\cdot\rb$ on elements concerning the ``right-angled compact version'' of $G$). Then $L\rb$ has $k$ vertices; let $f_1\rb$, $f_2\rb$ be the numbers of its edges and triangles, respectively. Then, they are uniquely determined by $k$, using Euler formula for such triangulation:
\begin{align*}
k-f_1\rb+f_2\rb=2.
\end{align*}
We have also $2f_1\rb=3f_2\rb$, so
\begin{align}
2k - 3f_2\rb + 2f_2\rb = 4,\notag\\
f_2\rb = 2(k - 2),\text{\quad and}\label{f_2rb}\\
f_1\rb = \frac{3}{2}f_2\rb = 3(k - 2).\label{f_1rb}
\end{align}
So, if $W\rb$ is the growth series of $(G\rb,S\rb)$, then, using formula \eqref{1/W3sums}, we have
\begin{equation}
\frac{1}{W\rb(t)} = 1 - \frac{k}{t^{-1}+1} + \frac{f_1\rb}{(t^{-1}+1)^2} - \frac{f_2\rb}{(t^{-1}+1)^3},\label{1/Wrbsum}
\end{equation}
as for $\mathbb{Z}_2^n$ with generating set consisting of the generators of the factors (copies of $\mathbb{Z}_2$), its growth series is $(1+z)^n$ (for $n=1$, it is obvious, for other $n$, see \cite[17.1.13]{Dav}); so from \eqref{f_1rb} and \eqref{f_2rb}
\begin{equation}
\frac{1}{W\rb(t)} = \frac{t-1}{(t+1)^3}(-t^2 + (k-4)t-1).\label{1/Wrbprod}
\end{equation}
As $k\ge 6$, all the roots of $1/W\rb(t)$ are non-negative and the least one is
$$\frac{k-4-\sqrt{(k-4)^2-4}}{2}.$$
In this proof, I am not going to use existence of such group $G\rb$, only the fact that the right-hand side of the inequality of the theorem \ref{gr>=} (which coincides with formula for the growth of $(G\rb,S\rb)$) is indeed reciprocal of that least positive root of $1/W\rb(t)$ (similarly as in remark \ref{root1/W}). So I define just the function $1/W\rb(t)$ by \eqref{1/Wrbsum} (in terms of $f_1\rb,f_2\rb$ defined by \eqref{f_1rb} and \eqref{f_2rb}) and I want to bound the least positive root of $1/W$ by that of $1/W\rb$ (from above), which is less or equal to $1$. Note that $1/W(0),1/W\rb(0)=1>0$ and $1/W$, $1/W\rb$ are continuous on $[0;1]$ (which is easily seen from thm.~\ref{Steinb}, formulae \eqref{1/Wrbsum}, \eqref{1/Wrbprod} and for $1/W$ at $0$---from the fact that $\r(W)>0$), so, to get that bound, it suffices to prove that
\begin{equation}\label{ineqWWrb}
1/W(t)\le 1/W\rb(t)
\end{equation}
for $t\in(0;1]$. I do it in claims \ref{clWsum} and \ref{clsumWrb}.
\end{rem}
\begin{prop}
The nerve $L$ embeds into sphere $\SS$ (meaning that its geometric realisation embeds into $\SS$).\footnote{A combinatorial version of the idea of this fact in a bit different setting is present in \cite[example 7.1.4]{Dav}.}
\end{prop}
\begin{proof}
First, note that $\B=\bdi\Pi\cup\bd\Pi$ is homeomorphic to $\SS$. To see this, consider Klein unit ball model of $\HHH$ (see e.g.~\cite[chapter I.6]{BH}). Here, $\Pi$ is a convex set (as a subset of $\mathbb{R}^3$), in particular, it is star-convex  with regard to some point in $\int\Pi$, so indeed, $\B\cong\SS$.
\par Now, i construct an embedding $\Phi$ of $L$ into $\B$ (which completes the proof). Let us fix an interior point $c_f$ or $c_e$, respectively, of each face $f$ and of each edge $e$ of $\Pi$. Then, for $s\in S=V(L)$, I put $\Phi(s)=c_f$, where $f$ is the face of $\Pi$ corresponding to $s$. Next, I embed the edges of $L$: for $e=\{s,t\}\in E(L)$, the planes of faces corresponding to $s$ and $t$ have non-empty intersection (in order that the corresponding reflections generated a finite group), so by proposition \ref{ePi} those faces are neighbours---let $e\d$ denote their common edge\footnote{Not to be confused with $e\d$ from notation \ref{ed}.}. I join $\Phi(s)$ with $c_{e\d}$ and $\Phi(t)$ with $c_{e\d}$ by geodesic segments (lying in the faces corresponding to $s$, $t$, respectively). The union of those two segments is a path in $\B$ joining $\Phi(s)$ with $\Phi(t)$---let $\Phi(e)$ be that path. Note that $\Phi(e_1)$ and $\Phi(e_2)$ are disjoint off the endpoints for edges $e_1\neq e_2$ of $L$, so now $\Phi$ is an embedding of $1$-skeleton of $L$ into $\B$. Now, each $2$-simplex $\s$ of $L$ corresponds to three generators $s_i\in S$ ($i=1,2,3$). Let $e_i, i=1,2,3$ be edges of $\s$. Then points $\Phi(s_i)$ are pairwise joined by paths $\Phi(e_i)$. Because $\langle s_1,s_2,s_3\rangle$ is finite, the planes of faces corresponding to $s_1,s_2,s_3$ have non-empty intersection, due to corrolary II.2.8 from \cite{BH} ($\HHH$ is a $\mathrm{CAT}(0)$ space). So, from proposition \ref{vPi}, those faces share a vertex $p$. From Jordan-Schoenflies theorem, $\bigcup_{i=1}^3\Phi(e_i)$ disconnects $\B$ into two open discs (up to homeomorphism) with boundary $\bigcup_{i=1}^3\Phi(e_i)$. So I take the closure of the component of $\B\sm\bigcup_{i=1}^3\Phi(e_i)$ containing $p$---call it $D$. Obviously, it is an embedding of $\s$ in $\B$ (with sides $\Phi(e_i)$ and vertices $\Phi(s_i)$). Put $\Phi(\s)=D$. Then indeed, $\Phi$ is an embedding of $L$, because for different $2$-simplices $\s_1,\s_2$ of $L$, the interiors (taken in $\B$) of $\Phi(\s_1)$ and $\Phi(\s_2)$ are disjoint.
\end{proof}
\begin{df}\label{L1}
Let us denote by $L_1$ the embedding of $1$-skeleton of $L$ in $\SS$ from the above proposition. I call closure of a component of its complement in $\SS$ its \emd{face}. I am going to identify $1$-skeletons of $L$ and of its embedding in $\SS$. Also, I declare a face of $L_1$ to belong to $L$ (or to be a $2$-simplex of $L$) iff it equals the embedding of some $2$-simplex of $L$.
For $f$---a face of $L_1$, I denote by $\deg(f)$ the \emd{number of sides} of $f$, i.e.~number of edges lying in $f$, but with edges crossing interior of $f$ counted twice.
\end{df}
%\footnote{For a definition of nerve, see \cite[section 7.1.]{Dav}.} $L\rb$ which is a flag\footnote{A simplicial complex is \emph{flag} iff any finite set of its vertices which are pairwise connected by edges spans a simplex.} 
\begin{denot}\label{f's}
Let $f_0,f_1,f_2$ be the numbers of vertices, edges and faces of $L_1$, respectively, $\ft2$---of $3$-sided faces of $L_1$ and $\fL2$---the number of 2-simplices of $L$. For $e$---an edge of $L$, let $m(e)$ be the order of $s_1s_2$ in $G$, where $e=\{s_1,s_2\}$ (so that the dihedral angle of $\Pi$ at the edge corresponding to $e$ is $\pi/m(e)$). (Note that $f_0=k$.)
\end{denot}
\begin{rem}\label{WvWe}
Note that for a vertex $v$ of $L$,
\begin{equation}\label{Wv}
W_{\{v\}}(z)=z+1
\end{equation}
and, by an easy exercise, for an edge $e$ of $L$,
\begin{equation}
W_e(z)=(z+1)(z^{m(e)-1}+\cdots+z+1).
\end{equation}
\end{rem}
\begin{denot}
To ease our work with such polynomials, for $n\in\mathbb{N}$, I define polynomials
\begin{equation*}
[n](z)=\underbrace{z^{n-1}+\cdots+z+1}_{n\text{ summands}}
\end{equation*}
and for $n_1,\ldots,n_m\in\mathbb{N}$,
\begin{equation*}
[n_1,\ldots,n_m](z)=\prod_{i=1}^m[n_i](z).
\end{equation*}
I will drop ``$(z)$'' if the argument is obvious.
\end{denot}
For $t\in(0;1]$, using this notation, remark \ref{WvWe} and \eqref{1/W3sums}, we have
\begin{align}
\frac{1}{W(t)}&=1 - \frac{f_0}{t^{-1}+1} + \sum_{e\text{---a side of }L} \frac{1}{[2,m(e)](t^{-1})} - \sum_{f\text{---a $2$-simplex of }L} \frac{1}{W_f(t^{-1})}%\label{1/WsumfromWvWe}
\intertext{and, reordering the sums into a sum taken over the faces of $L_1$}
&=1 - \frac{f_0}{t^{-1}+1} + \sum_{f\text{---a face of }L_1} \underbrace{\left(\sum_{e\text{---a side of }f}\frac{1}{2}\frac{1}{[2,m(e)](t^{-1})} - \frac{\ind_{f\in L}}{W_f(t^{-1})}\right)}_{A(f)}.\label{A(f)}
\end{align}
It will be convenient to work with the expression $A(f)$ defined above (for $f$ a face of $L_1$).
\begin{rem}\label{remsidessum}
The variable $e$ of the last summation above runs through the sides of $f$, which means that some edge may be counted twice there.
\end{rem}
\begin{clm}\label{clWsum}
For $t\in(0;1]$,
\begin{equation*}
\frac{1}{W(t)} \le 1 - \frac{f_0}{t^{-1}+1} + \frac{f_1}{(t^{-1}+1)^2} - \frac{\ft2}{(t^{-1}+1)^3}.
\end{equation*}
\end{clm}
\begin{proof}
It suffices to prove the bound
\begin{equation}
A(f) \le \underbrace{\frac{\deg(f)}{2(t^{-1}+1)^2} - \frac{\ind_{f\text{---3-sided}}}{(t^{-1}+1)^3}}_{B(f)} \label{B(f)}
\end{equation}
for $f$---a face of $L_1$, to take a sum over all such $f$'s, and to make calculation analogous to deriving \eqref{A(f)} from \eqref{1/W3sums} (with similar reodering of sums) for the right-hand side of the claim (written using summations on edges and $3$-sided faces of $L_1$), to complete the proof.
So I estimate $A(f)$ for $f$ a face of $L_1$, considering three cases:
\begin{case}$f$ is not 3-sided\end{case}
Then
\begin{equation*}
A(f) \le \sum_{e\text{---a side of }f}\frac{1}{2[2,2](t^{-1})} - 0 = B(f).
\end{equation*}
I used here the fact that $[n]\le[m]$ for $n\le m$, on $[0;\infty)$.
\begin{case}$f$ is 3-sided, but outside $L$\end{case}
Note that because of hypothesis \ref{L!=s+vs} $\overline{\SS\sm f}$ is not a triangle of $L$. Hence, vertices of $f$ generate an infinite subgroup of $G$. It means that the dihedral angles between the faces of $\Pi$ corresponding to the vertices of $f$, sum up to a number not greater than $\pi$,\footnote{See e.g.~\cite[exercise 6.8.10]{Dav} combined with \cite[theorem 6.8.12]{Dav} for an explanation.} so
% to an ideal or (pozaidealny -- ?) vertex%
%\footnote{Note that I do not call an ideal or (pozaidealny -- ?) vertex just a vertex.}
%$f\d$ of $\Pi$ shared by $3$ of its faces. One can obtain a hyperbolic or euclidean triangle with the same set of angles as the dihedral angles between those $3$ faces as follows: take a plane---if $f\d$ is (pozaidealny)---or a horosphere (which is isometric to $\mathbb{R}^2$)---if $f\d$ is ideal---perpendicular to all planes of faces sharing $f\d$ and take its intersection with those planes. Then the obtained lines contain the sides of desired triangle. That means that the sum of those dihedral angles $\sum_{e\text{---a side of }f}\frac{\pi}{m(e)}\le\pi$, so
$$\sum_{e\text{---a side of }f}\frac{1}{m(e)}\le 1.$$
Note also that for an edge $e$ and $t^{-1}>1$,
\begin{align}
[2,m(e)](t^{-1}) &= ((t^{-2})^{\frac{m(e)}{2}}-1)(t^{-1}+1)/(t^{-1}-1) \ge \notag\\
&\ge \underbrace{\frac{m(e)}{2}(t^{-2}-1)}_{\text{value of tangent to $z^{\frac{m(e)}{2}}-1$ at $z=1$ taken at $t^{-2}$}}(t^{-1}+1)/(t^{-1}-1) = \frac{m(e)}{2}(t^{-1}+1)^2.
\end{align}
This is also true for $t=1$. Hence
\begin{equation*}
A(f) \le \sum_{e\text{---a side of }f}\frac{1}{m(e)(t^{-1}+1)^2} - 0 \le \frac{1}{(t^{-1}+1)^2} < \frac{3}{2}\frac{1}{(t^{-1}+1)^2} - \frac{1}{2}\frac{1}{(t^{-1}+1)^3} = B(f),
\end{equation*}
as $t^{-1}+1 > 1$.
\begin{case}$f$ is a triangle from $L$\end{case}
Because here $f\subseteq S$ is spherical, the Steinberg formula (theorem \ref{Steinb}) yields
\begin{align}
\frac{1}{W_f(t)} &= 1 - \frac{3}{t^{-1}+1} + \sum_{e\text{---a side of }f} \frac{1}{[2,m(e)](t^{-1})} - \frac{1}{W_f(t^{-1})} =
&= 1 - \frac{3}{t^{-1}+1} + 2A(f) + \frac{1}{W_f(t^{-1})}.\label{Steinb f in L}
\end{align}
%hence
%\begin{align}
%A(f) %&= \sum_{e\text{---a side of }f}\frac{1}{2[2,m(e)](t^{-1})} - \frac{\ind_{f\in L}}{W_f(t^{-1})}=\\
%= \frac{1}{2}\left(\frac{1}{W_f(t)}-1+\frac{3}{t^{-1}+1}-\frac{1}{W_f(t^{-1})}\right).
%\end{align}
Let $m(f)$ be the length of the longest element of $\langle f\rangle$ (i.e.~$m(f)=\max_{g\in\langle f\rangle}l(g)$, using definition \ref{lgth}).
%Because $W_f$ is a palindromic polynomial due to
By lemma 17.1.1.~%and the remark following it
in \cite{Dav}, we have $W_f(t)=t^{m(f)}W_f(t^{-1})$, so
\begin{align}
\frac{t^{-m(f)}-1}{W_f(t^{-1})} = 1 - \frac{3}{t^{-1}+1} + 2A(f).\label{A(f)(W_f)final}
%A(f) = \frac{1}{2} \left(\frac{t^{-m(f)}-1}{W_f(t^{-1})} - 1 + \frac{3}{t^{-1}+1}\right).\label{A(f)(W_f)final}
\end{align}
%If $t=1$, then the above expression equals $1/4$ and $B(f)=1/4$, so $A(f)\le B(f)$.\\
%Now, let $t<1$. 
Consider ``right-angled counterparts'' of $\langle f\rangle$ and $W_f$, which are $\mathbb{Z}_2^3$ and its growth series $W_f\rb$, respectively, the latter given by
%Note that computation analogous to \eqref{Steinb f in L} through \eqref{A(f)(W_f)final} can be made for the ``right-angled counterpart'' of $\langle f\rangle$, which is $\mathbb{Z}_2^3$---for $W_f\rb$ given by
\begin{equation}
\frac{1}{W_f\rb(t)} = 1 - \frac{3}{t^{-1}+1} + \frac{3}{(t^{-1}+1)^2} - \underbrace{\frac{1}{(t^{-1}+1)^3}}_{1/W_f\rb(t^{-1})}% = \frac{1}{(t+1)^3}
\end{equation}
%(cf.~\eqref{Wv} and see\textbf{??})---obtaining
(as in first equality of \eqref{Steinb f in L}; see also explanation of \eqref{1/Wrbsum}). Note that computations analogous to \eqref{Steinb f in L} through \eqref{A(f)(W_f)final} can be made for $1/W_f\rb$ and $B(f)=\frac{3}{2(t^{-1}+1)^2} - \frac{1}{(t^{-1}+1)^3}$ in place of $1/W_f$ and $A(f)$, giving
\begin{equation}
\frac{t^{-3}-1}{(t^{-1}+1)^3} = \frac{t^{-3}-1}{W_f\rb(t^{-1})} = 1 - \frac{3}{t^{-1}+1} + 2B(f).
%B(f) = \frac{1}{2} \left(\frac{t^{-3}-1}{(t^{-1}+1)^3} - 1 + \frac{3}{t^{-1}+1}\right).
\end{equation}
So the inequality $A(f)\le B(f)$ is equivalent to
\begin{align}
\frac{t^{-m(f)}-1}{W_f(t^{-1})} &\le \frac{t^{-3}-1}{(t^{-1}+1)^3},
\intertext{which is obvious for $t=1$, and for $t<1$, it is equivalent to}
[m(f),2,2,2](t^{-1}) &\le W_f(t^{-1})[3](t^{-1}).\label{ineqW_fbeftable}
\end{align}
To prove this, I will need the following fact:
\begin{prop}
Let $a,b$ be natural numbers such that $a\le b+1$. Then for any natural $d\le a$,
$$[a-d,b+d](t)\le[a,b](t)$$
for any $t\ge 0$.
\end{prop}
\begin{proof}
Let $t\ge 0$. First, I prove the conclusion for $d=1$:
\begin{align*}
[a,b](t)-[a-1,b+1](t) &= [a,b] - ([a]-t^{a-1})([b]+t^b) = t^{a-1}[b] - [a]t^b + t^{a+b-1} =\\
&= ([a+b-1]-[a-1]) - ([a+b]-[b]) + t^{a+b-1} =\\
&= -t^{a+b-1} - [a-1] + [b] + t^{a+b-1} = [b] - [a-1] \ge 0.
\end{align*}
%Now, using an inductive argument, we have our thesis for any natural $d$.
The general case follows by induction on $d$.
\end{proof}
Below, I use the table 1.~from \cite{KelPer}, giving convenient formulae for $W_f$, which is growth series of a reflection group of a Coxeter triangle on $\SS$, i.e.~of $\mathrm{G}_2^{m(f)-1}\times\mathbb{Z}_2$, where $m(f)\ge 3$, $\mathrm{A}_3$, $\mathrm{B}_3$ or $\mathrm{H}_3$ (with the standard sets of generators). I consider those four cases below (for $t<1$), using the above proposition and the fact that $m(f)=\deg(W_f)$. Here, while $t^{-1}$ is still argument of the polynomials, I drop it for brevity.
%poprzednia wersja wykazu w 1. wersji pliku
%\begin{table}[!h]
\begin{center}
\begin{tabular}{|c|c|l|}
\hline
$\langle f\rangle\cong$ & $W_f$%---from \cite{KelPer}
 & \multicolumn{1}{|c|}{Proof of \eqref{ineqW_fbeftable}}\\ \hline
$\mathrm{G}_2^{m(f)-1}\times\mathbb{Z}_2$ & $[2,2,m(f)-1]$ & $[2,2,2,m(f)]\le[2,2,3,m(f)-1]=[3]W_f$\rule{0pt}{3ex}\\
$\mathrm{A}_3$ & $[2,3,4]$ & $[2,2,2,m(f)]=[2,2,2,6]\le[2,2,3,5]\le[2,3,3,4]=[3]W_f$\rule{0pt}{3ex}\\
$\mathrm{B}_3$ & $[2,4,6]$ & $[2,2,2,m(f)]=[2,2,2,9]\le[2,2,4,7]\le[2,3,4,6]=[3]W_f$\rule{0pt}{3ex}\\
$\mathrm{H}_3$ & $[2,6,10]$ & $[2,2,2,m(f)]=[2,2,2,15]\le[2,2,7,10]\le[2,3,6,10]=[3]W_f$\rule[-2ex]{0pt}{5ex}\\ \hline
\end{tabular}
\end{center}
%\end{table}
\par That finishes the proof of inequality \eqref{B(f)} in case of $f$ a triangle from $L$, hence completes the proof of the claim.
\end{proof}
\begin{clm}\label{clsumWrb}
For $t\in(0;1]$,
\begin{equation*}
1 - \frac{f_0}{t^{-1}+1} + \frac{f_1}{(t^{-1}+1)^2} - \frac{\ft2}{(t^{-1}+1)^3} \le \frac{1}{W\rb(t)}
\end{equation*}
(left-hand side above is exactly the right-hand side in claim \ref{clWsum}).
\end{clm}
\begin{proof}
From \eqref{1/Wrbsum}, to prove the claim, it suffices to show that
\begin{equation*}
\frac{f_1}{(t^{-1}+1)^2} - \frac{\ft2}{(t^{-1}+1)^3} \le \frac{f_1\rb}{(t^{-1}+1)^2} - \frac{f_2\rb}{(t^{-1}+1)^3}.
\end{equation*}
Euler formula for $L_1$ gives
\begin{equation}
f_1\le f_0+f_2-2\label{ineqEulL_1}
\end{equation}
(recall that $L_1$ may be disconnected), so the desired inequality above is implied by
\begin{align}
\frac{f_0+f_2-2}{(t^{-1}+1)^2} - \frac{\ft2}{(t^{-1}+1)^3} &\le \frac{3(f_0-2)}{(t^{-1}+1)^2} - \frac{2(f_0-2)}{(t^{-1}+1)^3},\\
\frac{f_2}{(t^{-1}+1)^2} - \frac{\ft2}{(t^{-1}+1)^3} &\le \frac{2(f_0-2)t^{-1}}{(t^{-1}+1)^3}.\label{clsumWrb.ineqf_2}
\end{align}
Now, counting sides in every face of $L_1$ gives
\begin{equation}
2f_1 \;\ge\; 3\ft2 + 4(f_2-\ft2) \;=\; 4f_2-\ft2,
\end{equation}
so from \eqref{ineqEulL_1}
\begin{align}
2(f_0+f_2-2) \;&\ge\; 4f_2-\ft2,\notag\\
2f_0-2f_2-4 \;&\ge\; -\ft2,\label{ineqft2}
\end{align}
so inequality \eqref{clsumWrb.ineqf_2} holds, provided that
\begin{align*}
\frac{f_2}{(t^{-1}+1)^2} + \frac{2f_0-2f_2-4}{(t^{-1}+1)^3} \;&\le\; \left. \frac{2(f_0-2)t^{-1}}{(t^{-1}+1)^3}\quad \right|\cdot(t^{-1}+1)^3\\
%f_2(t^{-1}+1) + (2f_0-2f_2-4) \;&\le\; 2(f_0 - 2)t^{-1},\\
f_2(t^{-1}-1) \;&\le\; 2(f_0 - 2)(t^{-1}-1),\\
0 \;&\le\; (2f_0 - 4 - f_2)(t^{-1}-1),
\end{align*}
which is true, because $t^{-1}\ge 1$ and, due to \eqref{ineqft2}, $2f_0-4-f_2 \ge f_2-\ft2 \ge 0$.
\end{proof}
Claims \ref{clWsum} and \ref{clsumWrb} give the inequality \eqref{ineqWWrb} and hence complete the proof of the theorem.
\end{proof}

\end{document}